\newtheorem{theorem}{Theorem}[section]
\newtheorem{corollary}[theorem]{Corollary}
\newtheorem{lemma}[theorem]{Lemma}
\newtheorem{example}{Example}
\numberwithin{figure}{subsection}
\numberwithin{equation}{section}
\begin{document}

\title[An inverse source problem]{An inverse source problem for the stochastic multi-term time-fractional diffusion-wave equation}

\author{Xiaoli Feng}
\address{School of Mathematics and Statistics, Xidian University, Xi'an, 713200, China}
\email{xiaolifeng@xidian.edu.cn}

\author{Qiang Yao}
\address{School of Mathematics and Statistics, Xidian University, Xi'an, 713200, China}
\email{yqiang@stu.xidian.edu.cn}

\author{Peijun Li}
\address{Department of Mathematics, Purdue University, West Lafayette, Indiana 47907, USA}
\email{lipeijun@math.purdue.edu}

\author{Xu Wang}
\address{LSEC, ICMSEC, Academy of Mathematics and Systems Science, Chinese Academy of Sciences, Beijing 100190, China, and School of Mathematical Sciences, University of Chinese Academy of Sciences, Beijing 100049, China}
\email{wangxu@lsec.cc.ac.cn}

\thanks{XF and QY are supported by the Natural Science Basic Research Program of Shaanxi (No. 2023-JC-YB-054). PL is supported in part by the NSF grant DMS-2208256. XW is supported by NNSF of China (11971470, 11871068, and 12288201) and by CAS Project for Young Scientists in Basic Research (YSBR-087).}

\subjclass[2010]{35R30, 35R11, 60H15}

\keywords{inverse random source problem, multi-term time-fractional diffusion-wave equation, fractional Brownian motion,  mild solution, uniqueness, instability}

\begin{abstract}
In this paper, we study both the direct and inverse random source problems associated with the multi-term time-fractional diffusion-wave equation driven by a fractional Brownian motion. Regarding the direct problem, the well-posedness is established and the regularity of the solution is characterized for the equation. In the context of the inverse problem, the uniqueness and instability are investigated on the determination of the random source. Furthermore, a reconstruction formula is provided for the phaseless Fourier modes of the diffusion coefficient in the random source, based on the variance of the boundary data. To reconstruct the time-dependent source function from its phaseless Fourier modes, the PhaseLift method, combined with a spectral cut-off regularization technique, is employed to tackle the phase retrieval problem. The effectiveness of the proposed method is demonstrated through a series of numerical experiments.
\end{abstract}

\maketitle

\section{Introduction}

Time-fractional differential equations (TFDEs) have a wide range of applications across diverse fields, including mathematics, physics, engineering, biology, and finance. They offer valuable tools for modeling complex phenomena characterized by memory effects, non-local behaviors, and anomalous diffusion processes. The extensive body of research on TFDEs reflects their importance. For instance, Schneider et al. \cite{schneider1989fractional} conducted an analysis of the Green function of TFDEs with a fractional derivative order of $\alpha$ ($0<\alpha<1$). This Green function exhibits a mean-squared displacement resembling $Ct^\alpha$, a property considered essential for capturing sub-diffusion phenomena. In a separate study, Giona et al. \cite{giona1992fractional} highlighted the effectiveness of TFDEs in describing relaxation phenomena within complex viscoelastic materials. A multitude of other instances of mathematical, numerical studies, and applications of TFDEs can be found in references such as \cite{metzler2000random, jahanshahi2021fractional, singh2021analysis, sakamoto2011initial} and the associated citations.

In certain practical applications of TFDEs, it has been observed that the order of fractional derivatives in some models can vary within the range of $(0, 2)$, as demonstrated in studies such as \cite{coimbra2003mechanics}. Additionally, sub-diffusion processes exhibiting a logarithmic growth of mean-squared displacement, such as $C\log(t)$, have been introduced in research examples, as evidenced by \cite{chechkin2002retarding}. To address these scenarios, researchers have introduced the concept of distributed order TFDEs. In these equations, the distributed order derivative is obtained from a specified interval, such as $[0, 1]$ or even $[0, 2]$, and is weighted using a positive weighting function. A special case of the distributed order TFDE is the multi-term TFDE. Multi-term TFDEs provide a versatile framework to model systems exhibiting multiple relaxation or memory time scales, which are common in various natural phenomena.

In this paper, we consider the following stochastic multi-term time-fractional diffusion-wave equation driven by a fractional Brownian motion (fBm):
\begin{equation}\label{mainfunction}
\left\{
\begin{aligned}
&\sum\limits_{k=1}^{n}\partial_{t}^{\alpha_{k}}u(x,t)-\partial_{xx}u(x,t)=f(t)\dot{B}^H(x), && (x,t)\in{D}\times \mathbb{R}_{+},\\
&u(x,0)=0, && x\in\overline{{D}},\\
&\partial_tu(x,0)=0, && x\in\overline{{D}},\quad \text{if}\ \ \alpha_n\in(1,2], \\
&\partial_{x}u(0,t)=0, \ u(1,t)=0, && t\in\mathbb{R}_{+},
\end{aligned}
\right.
\end{equation}
where $\partial_{t}^{\alpha_{k}}$ denotes the Caputo fractional derivative with order $\alpha_{k}\in{(0,2]}$ satisfying the condition $\alpha_{1}<\alpha_{2}<\cdots<\alpha_{n}$ and $\alpha_1<2$. The domain $D:=(0,1)$. The diffusion coefficient $f$, also referred to as the source function, is a deterministic function with the property $f(0) = 0$. In this context, $B^H$ represents the spatial fBm with a Hurst index $H\in(0,1)$, and $\dot{B}^H$ represents the formal derivative of $B^H$ with respect to the spatial variable $x$. Further details about the fBm and the Caputo fractional derivative will be provided in Section \ref{sec:2}.

Two distinct problems are associated with \eqref{mainfunction}: the direct source problem and the inverse source problem. In the direct problem, given $f(t)$, the objective is to solve the initial-boundary problem \eqref{mainfunction} to determine the solution $u$. On the other hand, the inverse source problem seeks to determine the source function $f$ from the boundary data $\{u(0,t)\}_{t\in\mathbb R+}$. This paper is dedicated to addressing both the direct and inverse source problems. To simplify our analysis without loss of generality, we focus on random sources in the form of $f(t)\dot B^H(x)$ rather than considering a more general setting, which includes a deterministic function $g(x,t)$, given as $g(x,t)+f(t)\dot B^H(x)$. In the case of random sources in this general form, $g(x,t)+f(t)\dot{B}^H(x)$, taking the expectation of both sides of the equation transforms the problem into a deterministic one involving an unknown source $g(x,t)$. Notably, productive results have already been achieved in solving such deterministic inverse source problems. Additionally, we assume that the initial and boundary conditions are homogeneous, as is commonly done, since nonhomogeneous conditions can be converted into homogeneous ones using standard techniques in partial differential equations.

Extensive research has been undertaken on deterministic sources in the context of multi-term TFDEs, including both direct and inverse source problems. For instance, in \cite{daf2008}, multivariate Mittag--Leffler functions were utilized to represent solutions of the initial-boundary value problem for multi-term TFDEs with constant coefficients when $1<\alpha_k<2$. In \cite{jiang2012analytical}, innovative techniques were introduced for deriving analytical solutions for multi-term TFDEs. The investigation of well-posedness and long-term asymptotic behavior of the initial boundary value problem for multi-term TFDEs was addressed in \cite{li2015initial}. A strong maximum principle for multi-term TFDEs was established in \cite{liu2017strong}, demonstrating uniqueness in determining the temporal component of the source term. In \cite{li2019identification}, the authors focused on the identification of time-dependent source terms in multi-term TFDEs using boundary data. In \cite{jiang2020inverse}, successful recovery of spatially dependent sources in multi-term TFDEs was achieved using final data.

Due to the inherent uncertainties present in practical problems, researchers have directed considerable attention to the investigation of stochastic models. In contrast to deterministic inverse problems, stochastic inverse problems are confronted with additional challenges due to ill-posedness and the presence of randomness and uncertainty. Notably, significant progress has been made in the field of inverse random source scattering problems, particularly in the context of stochastic wave equations driven by white noise. In this regard, effective computational models have been developed, as demonstrated by the contributions of \cite{bao2016inverse, bao2017inverse, bao2010numerical, li2018inverse, li2021inverse}. These works primarily center on the reconstruction of statistical characteristics related to random sources, including parameters such as mean and variance.

In recent years, there has been significant progress in the field of inverse random source problems for TFDEs. Research efforts have mainly focused on addressing two distinct categories of noise: time-dependent and spatial-dependent. In the case of time-dependent noise, Niu et al. \cite{niu2020inverse} explored scenarios involving a random source expressed as $f(x)h(t) + g(x)\dot{B}(t)$ by utilizing statistical information derived from the final data. Expanding upon this line of research, Feng et al. \cite{feng2020inverse} broadened the scope of their investigation to contain situations featuring a random source in the form of $f(x)h(t) + g(x)\dot{B}^H(t)$. Additionally, in \cite{liu2020reconstruction, Fu+2021}, the specific case involving a random source $f(x)(g_1(t) + g_2(t)\dot{B}(t))$ was examined. Lassas et al. \cite{LassasLiZhang2023ip} studied the general case characterized by a random source given by $I_t^{\delta}(f_1(x)g_1(t) + f_2(x)g_2(t)\dot{B}(t))$, where $I_t^{\delta}$ represents the Riemann--Liouville fractional integral operator. In contrast, research on spatial-dependent noise in this context is relatively limited. An exception to this is the work of Gong et al. \cite{gong2021numerical}, where they conducted a detailed analysis of a TFDE with $\alpha\in(0,1)$, characterized by a random source represented as $f(t)\dot{B}(x)$.

This paper is dedicated to solving the problem \eqref{mainfunction} involving the multi-term TFDE with the random source $f(t)\dot{B}^H(x)$. When the Hurst parameter $H=\frac12$, the fractional Brownian motion (fBm) ${B}^H(x)$ reduces to the classical Brownian motion ${B}(x),$ and the random source becomes $f(t)\dot{B}(x)$. Furthermore, the multi-term TFDE considered here can also be reduced to the single-term case, i.e., the classical TFDE with the fractional derivative order $0<\alpha<2$, which includes sub-diffusion ($0<\alpha<1$), super-diffusion ($1<\alpha<2$), and heat conduction ($\alpha=1$). In this context, our research can be viewed as an extension of the prior work presented in \cite{gong2021numerical}. The approach of transforming problem \eqref{mainfunction} into a stochastic boundary value problem in the frequency domain, followed by the application of the PhaseLift method to recover the time-dependent source function, is inspired by the literature \cite{gong2021numerical}. The central challenge of our study lies in dealing with the complexities introduced by fBm. To date, there is limited research on the inverse random source problem with a source driven by fBm. In \cite{feng2020inverse}, an inverse random source problem was examined for the TFDE with $0<\alpha<1$, where the random source was given as $f(x)h(t)+g(x)\dot{B}^H(t)$. To handle stochastic integration,   the moving average representation of fBm was employed to convert the variance of random integrals into deterministic integrals. Nevertheless, the integrals transformed through this technique proved to be particularly complex and involved addressing a significant number of singular integrals.

In this study, we adopt the harmonizable representation, as described in \cite{dasgupta1997fractional}, to address our problem. In contrast to the moving average representation of fBm, the harmonizable representation enables us to express the variance of random integrals in a more concise manner. A fundamental prerequisite for this approach is that the integrand function must satisfy specific properties. However, it is worth noting that in our theoretical analysis, the integrand function is closely associated with both the Green function \eqref{eq:g} and the Hurst parameter $H$. The Green function \eqref{eq:g} exhibits a complex form and has somewhat limited desirable properties, presenting a challenge. Additionally, the Hurst parameter, which falls within the range of $0<H<1$, can lead to the emergence of singular integrals in \eqref{eq:rep}, especially when $0<H<\frac12$. Through subsequent analysis, we derive a crucial isometry formula, as presented in Lemma \ref{isometry}. Utilizing this formula, we establish the well-posedness of the solution for the direct problem, and concurrently, we prove the uniqueness and characterize the ill-posed nature of the inverse problem.

To validate our theoretical findings, we conduct numerical experiments for \eqref{mainfunction} with two time fractional terms, addressing cases of both sub-diffusion and super-diffusion simultaneously. We employ a finite difference scheme to solve the direct problem and obtain the boundary data. For the inverse problem, given that the available data is the modulus in the frequency domain and the problem is inherently ill-posed, it necessitates the resolution of a phase retrieval problem. To address this challenge, we adopt the PhaseLift method, in conjunction with a spectral cut-off technique, to reconstruct the source function. Our numerical results demonstrate the effectiveness of the proposed method in handling both smooth and nonsmooth source functions.

The remaining sections of this paper are structured as follows. Section \ref{sec:2} provides some necessary background information to facilitate the main results. The well-posedness of the direct problem is demonstrated in Section \ref{sec:3}. Section \ref{sec:4} provides a proof of uniqueness and a characterization of the ill-posed nature of the inverse problem. In Section \ref{sec:5}, we introduce numerical methods for solving the direct problem, along with detailed information regarding the PhaseLift method for solving the inverse problem, supported by numerical examples to confirm the theoretical results. Finally, in Section \ref{sec:6}, we conclude with a summary of our study and offer suggestions for future research directions.

\section{Preliminaries}\label{sec:2}

In this section, we provide a brief introduction to fractional Brownian motion and the Caputo fractional derivative, both of which are employed in the context of this study.

\subsection{Fractional Brownian motion}

A centered Gaussian process $B^H = \{B^H(x): x\in\mathbb R\}$, defined on a probability space that comprises a complete triple $(\Omega,\mathcal F, \mathbb P)$, is referred to as a fractional Brownian motion (fBm) characterized by a Hurst index $H\in(0, 1)$ when it exhibits the covariance function, as presented in \cite[Chapter 5.1]{N06} or \cite[Definition 7.2.2]{ST94}:
\[
\mathcal R_H(x,y):=\mathbb{E}\left[B^H(x)B^H(y)\right]=\frac{1}{2}\left(|x|^{2H}+|y|^{2H}-|x-y|^{2H}\right),\quad x,y\in\mathbb{R}.
\]
It can be readily verified that $B^H$ exhibits self-similarity with an index of $H$, expressed as $B^H(ax)\overset{d}{=}a^{H}B^H(x)$ for any $a>0$. Additionally, it possesses stationary increments, as indicated by $B^H(x+h)-B^H(x)\overset{d}{=}B^H(h)-B^H(0)$ for any $x,h\in\mathbb{R}$. Here, the notation $X\overset{d}{=}Y$ denotes that random variables $X$ and $Y$ share the same probability distribution.

In order to streamline the formulation of moments for the stochastic integral, we introduce a specific integral representation of $B^H$ in relation to a complex Gaussian measure defined over the entire real line $\mathbb{R}$.
	
\begin{lemma}[cf. \cite{ST94}]
The fBm $B^H$ with $H\in(0,1)$ has the integral representation given by
\begin{equation}\label{eq:rep}
B^H(x)=C_H \int_{-\infty}^{\infty} \frac{e^{{\rm i} \lambda x}-1}{{\rm i} \lambda|\lambda|^{H-\frac{1}{2}}} \mathrm{d} \widetilde{W}(\lambda),\quad x \in \mathbb{R},
\end{equation}
where the constant $C_H$ is defined as
\begin{equation*}
C_H:=\left(\frac{H\Gamma(2H)\sin(H\pi)}{\pi}\right)^{\frac{1}{2}},
\end{equation*}
and $\widetilde{W}:=W_1+\mathrm{i}W_2$ represents a complex Gaussian measure. Here, $W_1$ and $W_2$ are independent Gaussian measures that are independently scattered over $\mathbb{R}_+$, and they satisfy the properties $W_1(A)=W_1(-A)$ and $W_2(A)=-W_2(-A)$ for any Borel set $A$ of finite Lebesgue measure.
\end{lemma}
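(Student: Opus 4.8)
The plan is to verify that the stochastic integral on the right-hand side of \eqref{eq:rep} is a centered Gaussian process whose covariance function is exactly $\mathcal R_H$; since the text defines an fBm as \emph{any} centered Gaussian process with that covariance, this identifies the two and proves the representation. Write $g_x(\lambda):=\frac{e^{{\rm i}\lambda x}-1}{{\rm i}\lambda|\lambda|^{H-1/2}}$ for the integrand. The first point is that $g_x\in L^2(\mathbb R)$ for every $x\in\mathbb R$ and every $H\in(0,1)$: near $\lambda=0$ one has $e^{{\rm i}\lambda x}-1={\rm i}\lambda x+O(\lambda^2)$, so $|g_x(\lambda)|^2\sim x^2|\lambda|^{1-2H}$, which is integrable at the origin because $2H-1<1$; near $\lambda=\pm\infty$, $|e^{{\rm i}\lambda x}-1|\le 2$ gives $|g_x(\lambda)|^2\le 4|\lambda|^{-1-2H}$, integrable because $2H+1>1$. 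Hence the Wiener integral $\int_{\mathbb R}g_x\,{\rm d}\widetilde W$ is well defined; being the integral of a deterministic kernel against a Gaussian random measure, it is centered Gaussian, and (taking linear combinations over finitely many $x$) the whole family is a centered Gaussian process. Also $g_0\equiv 0$, consistent with $B^H(0)=0$.

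Next I would check that the integral is real-valued, which is where the symmetry of $W_1,W_2$ enters. Since $\overline{g_x(\lambda)}=g_x(-\lambda)$, write $g_x=g_x^{(1)}+{\rm i}g_x^{(2)}$ with $g_x^{(1)}$ even and $g_x^{(2)}$ odd. Using $W_1(A)=W_1(-A)$ and $W_2(A)=-W_2(-A)$, the integrals $\int g_x^{(2)}\,{\rm d}W_1$ and $\int g_x^{(1)}\,{\rm d}W_2$ vanish, so $\int_{\mathbb R}g_x\,{\rm d}\widetilde W=\int g_x^{(1)}\,{\rm d}W_1-\int g_x^{(2)}\,{\rm d}W_2\in\mathbb R$. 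Therefore $\mathbb E[B^H(x)B^H(y)]=\mathbb E\big[\big(\int g_x\,{\rm d}\widetilde W\big)\overline{\big(\int g_y\,{\rm d}\widetilde W\big)}\big]$, and the isometry for Wiener integrals against $\widetilde W$ (which carries the normalization of $W_1,W_2$) reduces this to $\kappa\,C_H^2\int_{\mathbb R}g_x(\lambda)\overline{g_y(\lambda)}\,{\rm d}\lambda$ for a fixed constant $\kappa$.

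It then remains to evaluate the frequency integral. One has $g_x(\lambda)\overline{g_y(\lambda)}=\frac{(e^{{\rm i}\lambda x}-1)(e^{-{\rm i}\lambda y}-1)}{|\lambda|^{2H+1}}$, and since the imaginary part of the numerator is odd it integrates to zero over $\mathbb R$; the real part equals $1-\cos(\lambda x)-\cos(\lambda y)+\cos(\lambda(x-y))$, so
\[
\int_{\mathbb R}g_x\overline{g_y}\,{\rm d}\lambda=\int_{\mathbb R}\frac{1-\cos(\lambda x)}{|\lambda|^{2H+1}}\,{\rm d}\lambda+\int_{\mathbb R}\frac{1-\cos(\lambda y)}{|\lambda|^{2H+1}}\,{\rm d}\lambda-\int_{\mathbb R}\frac{1-\cos(\lambda(x-y))}{|\lambda|^{2H+1}}\,{\rm d}\lambda.
\]
Substituting $\lambda\mapsto\lambda/t$ shows each term is $|t|^{2H}K_H$ with $K_H:=\int_{\mathbb R}\frac{1-\cos s}{|s|^{2H+1}}\,{\rm d}s$, so the right-hand side is $K_H\big(|x|^{2H}+|y|^{2H}-|x-y|^{2H}\big)$. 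Evaluating $K_H=\frac{\pi}{H\Gamma(2H)\sin(H\pi)}$ by the classical computation of $\int_0^\infty s^{-1-2H}(1-\cos s)\,{\rm d}s$ (integration by parts followed by the Mellin identity $\int_0^\infty s^{\mu-1}\sin s\,{\rm d}s=\Gamma(\mu)\sin(\mu\pi/2)$), and checking $\kappa\,C_H^2K_H=\tfrac12$ for the stated value of $C_H$, gives $\mathbb E[B^H(x)B^H(y)]=\mathcal R_H(x,y)$ and finishes the proof.

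I expect the main obstacle to be the normalization bookkeeping: fixing the constant $\kappa$ in the isometry, which depends on the precise normalization of the Gaussian measures $W_1,W_2$ and on the symmetry conventions $W_1(A)=W_1(-A)$, $W_2(A)=-W_2(-A)$, and then matching it against the value of the singular integral $K_H$ so that all prefactors collapse to exactly $\tfrac12$ — this matching is precisely what calibrates $C_H$. A related care point is that $1/|\lambda|^{2H+1}$ is non-integrable at the origin on its own, so the cancellation $1-\cos(\lambda t)=O(\lambda^2)$ must be tracked carefully both in the $L^2$ argument and in the evaluation of $K_H$; once these are handled, the remaining steps are routine.
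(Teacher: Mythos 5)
The paper does not prove this lemma at all: it is quoted directly from Samorodnitsky--Taqqu \cite{ST94} (see also \cite{N06}), so there is no in-paper argument to compare against. Your proposal reconstructs the standard proof of the harmonizable representation, and the skeleton is correct: the $L^2$ estimates on $g_x$ at $0$ and at infinity are right for all $H\in(0,1)$; the parity argument showing $\int g_x^{(2)}\,\mathrm{d}W_1=\int g_x^{(1)}\,\mathrm{d}W_2=0$, hence real-valuedness, is exactly how the symmetry conventions on $W_1,W_2$ are meant to be used; and the reduction of the covariance to $\int_{\mathbb R}\frac{1-\cos(\lambda t)}{|\lambda|^{2H+1}}\,\mathrm{d}\lambda=|t|^{2H}K_H$ together with the algebraic identity $\Re\big[(e^{\mathrm i\lambda x}-1)(e^{-\mathrm i\lambda y}-1)\big]=1-\cos(\lambda x)-\cos(\lambda y)+\cos(\lambda(x-y))$ is the standard route to $\mathcal R_H$.

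One concrete correction on the constant: integration by parts gives $\int_0^\infty s^{-1-2H}(1-\cos s)\,\mathrm{d}s=\frac{\Gamma(1-2H)\cos(\pi H)}{2H}$, and the reflection and duplication formulas then yield
\[
K_H=\int_{\mathbb R}\frac{1-\cos s}{|s|^{2H+1}}\,\mathrm{d}s=\frac{\pi}{2H\Gamma(2H)\sin(\pi H)},
\]
which is half the value you quote (sanity check at $H=\tfrac12$: $K_{1/2}=\pi$, not $2\pi$). With the normalization of $\widetilde W$ implicitly used elsewhere in the paper, namely $\mathbb E\big|\int\phi\,\mathrm{d}\widetilde W\big|^2=\int_{\mathbb R}|\phi|^2\,\mathrm{d}\lambda$ (this is what makes the $H=\tfrac12$ case of Lemma \ref{isometry} reduce to the It\^o isometry plus Parseval with the factor $\tfrac1{2\pi}$), your $\kappa$ equals $1$ and then $C_H^2K_H=\tfrac12$ exactly as required. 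So the factor of $2$ you would have lost in $K_H$ does not survive your own calibration step, but as written the stated value of $K_H$ is off by $2$; with the corrected value the bookkeeping closes and the argument is complete.
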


The expression presented in \eqref{eq:rep} is commonly referred to as the harmonizable representation, also known as the spectral representation (cf. \cite{N06}). This representation is derived from the moving average representation of fBms using the Parseval identity. We refer to \cite{ST94} and \cite{N06} for further elaboration on these representations over the real line $\mathbb{R}$ and over finite intervals, respectively.

\subsection{Caputo fractional derivative}

The Caputo fractional derivative, which is one of the methods for computing fractional derivatives, was introduced by M. Caputo in 1967. To begin, let us revisit the definition of the $\nu$th order Caputo fractional derivative of a function $v$, denoted as
\begin{equation*}
_0D_t^\nu v(t)=\frac{1}{\Gamma(n-\nu)}\int_0^t(t-\xi)^{n-\nu-1}v^{(n)}(\xi)\mathrm{d}\xi,
\end{equation*}
where the Gamma function $\Gamma(\alpha)=\int_0^{\infty}e^{-s}s^{\alpha_1}\mathrm{d}s$, and $n=\lceil \nu \rceil$ with $\lceil \cdot \rceil$ denoting the smallest positive integer that is larger than or equal to $\nu$.

Next, we examine the Fourier transform of the Caputo fractional derivative.

\begin{lemma}[cf.\cite{das2011functional}]\label{lm:fracd}
Let $\nu\in\mathbb{R_+}$, and consider a causal function $v(t)$, where $v(t) = 0$ for $t\le0$, and $\sum_{k=0}^{n-1}|v^{(k)}(0)|=0$ with $n=\lceil \nu \rceil$. Additionally, assume that $v^{(k)}(t)$ has compact support for all $k=0,\ldots,n$. Under these conditions, the fractional derivative $_0D_t^\nu v(t)$ of $v(t)$ is well-defined in $L^2(\mathbb{R})$, and its Fourier transform satisfies
\begin{equation*}
\mathscr{F}[_0D_t^\nu v(\cdot)](\omega) = (\mathrm{i}\omega)^\nu\mathscr{F}[v(\cdot)](\omega),
\end{equation*}
where the Fourier transform of $v$ is denoted as
\begin{equation*}
\hat{v}(\omega):=\mathscr{F}[v(\cdot)](\omega)=\int_\mathbb{R}v(t)\mathrm{e}^{-\mathrm{i}\omega t}\mathrm{d}t.
\end{equation*}
\end{lemma}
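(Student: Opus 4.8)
The plan is to recognise the Caputo derivative as a convolution and then reduce the claim to the Fourier convolution theorem. Write $n=\lceil\nu\rceil$ and $\beta:=n-\nu\in[0,1)$, and let $g_\beta$ be the causal Riemann--Liouville kernel, $g_\beta(t)=t^{\beta-1}/\Gamma(\beta)$ for $t>0$ and $g_\beta(t)=0$ for $t\le 0$ (with $g_0:=\delta$). Since the integrand in the definition of ${}_0D_t^\nu v(t)$ vanishes for $\xi<0$ and for $\xi>t$, the integral over $[0,t]$ may be extended to all of $\mathbb R$, so that ${}_0D_t^\nu v=g_\beta*v^{(n)}$ on $\mathbb R$; because $g_\beta$ is locally integrable with an integrable singularity at the origin and $v^{(n)}$ is continuous with compact support, this convolution is well defined as a function. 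The hypotheses $v(t)=0$ for $t\le0$ and $v^{(k)}(0)=0$ for $0\le k\le n-1$, together with the compact support of the $v^{(k)}$, guarantee that the zero-extension of $v$ to $\mathbb R$ is regular enough that $v^{(n)}\in L^2(\mathbb R)$ carries no distributional mass at the origin; integrating by parts $n$ times — all boundary contributions vanishing by compact support — then gives $\mathscr F[v^{(n)}](\omega)=({\rm i}\omega)^n\hat v(\omega)$, and in particular $|\mathscr F[v^{(n)}](\omega)|\le|\omega|^n\|v\|_{L^1}$.

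It remains to compute $\mathscr F[g_\beta]$. The case $\beta=0$ is the ordinary $n$-th derivative and is immediate, so assume $0<\beta<1$. Since $g_\beta\notin L^1(\mathbb R)$, I would introduce the regularisation $g_\beta^\varepsilon(t):=g_\beta(t)e^{-\varepsilon t}$ for $\varepsilon>0$, which does belong to $L^1(\mathbb R)$ and, by the elementary identity $\int_0^\infty t^{\beta-1}e^{-zt}\,dt=\Gamma(\beta)z^{-\beta}$ valid for $\mathrm{Re}\,z>0$ with the principal branch, satisfies $\mathscr F[g_\beta^\varepsilon](\omega)=(\varepsilon+{\rm i}\omega)^{-\beta}$. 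The elementary convolution theorem applied to the honest $L^1$ function $g_\beta^\varepsilon$ and the compactly supported $L^2$ function $v^{(n)}$ gives $\mathscr F[g_\beta^\varepsilon*v^{(n)}](\omega)=(\varepsilon+{\rm i}\omega)^{-\beta}({\rm i}\omega)^n\hat v(\omega)$. Letting $\varepsilon\downarrow0$, one has $g_\beta^\varepsilon*v^{(n)}\to g_\beta*v^{(n)}$ by dominated convergence (using the compact support of $v^{(n)}$), while $(\varepsilon+{\rm i}\omega)^{-\beta}({\rm i}\omega)^n\hat v(\omega)\to({\rm i}\omega)^{\nu}\hat v(\omega)$ in $L^2(\mathbb R)$: this limit is square-integrable because $|({\rm i}\omega)^{-\beta}|\le1$ for $|\omega|\ge1$ with $\mathscr F[v^{(n)}]\in L^2$, while near $\omega=0$ the estimate $|({\rm i}\omega)^n\hat v(\omega)|\le|\omega|^n\|v\|_{L^1}$ dominates the singularity $|\omega|^{-\beta}$ since $n>\beta$. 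By uniqueness of $L^2$-limits,
\[
\mathscr F[{}_0D_t^\nu v](\omega)=\mathscr F[g_\beta*v^{(n)}](\omega)=({\rm i}\omega)^{-\beta}({\rm i}\omega)^n\hat v(\omega)=({\rm i}\omega)^{\nu}\hat v(\omega),
\]
where the exponents add because the principal branch of $z\mapsto z^s$ is multiplicative on $\{\mathrm{Re}\,z\ge0\}$ and $n-\beta=\nu$; since the right-hand side lies in $L^2(\mathbb R)$, Plancherel also yields ${}_0D_t^\nu v\in L^2(\mathbb R)$, as asserted.

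The step I expect to be the main obstacle is the rigorous treatment of $\mathscr F[g_\beta]$ for $0<\beta<1$: $g_\beta$ is only a tempered distribution rather than an $L^1$ or $L^2$ function, the oscillatory integral $\int_0^\infty t^{\beta-1}e^{-{\rm i}\omega t}\,dt$ does not converge absolutely, and one must justify commuting the Fourier transform past the convolution in this singular regime. The exponential cutoff $g_\beta^\varepsilon$ together with the compact support of $v^{(n)}$ — which is exactly what makes $g_\beta*v^{(n)}$ a genuine $L^2$ function and the passage $\varepsilon\downarrow0$ legitimate in both the physical and the frequency variables — is the device that resolves it; alternatively one may invoke the classical formula $\mathscr F[t_+^{\beta-1}/\Gamma(\beta)]=({\rm i}\omega)^{-\beta}$ in $\mathcal S'(\mathbb R)$ directly and use that convolution with the compactly supported $v^{(n)}$ commutes with the Fourier transform.
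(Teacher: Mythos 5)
The paper offers no proof of this lemma: it is stated as a quoted result with the citation to Das's book and is used as a black box to pass from \eqref{mainfunction} to the frequency-domain problem \eqref{freqmaineq}. So there is no in-paper argument to compare against, and your write-up is a self-contained proof where the paper has none. Your argument is correct and follows the standard route: writing ${}_0D_t^\nu v=g_\beta*v^{(n)}$ with $\beta=n-\nu$ and the causal kernel $g_\beta(t)=t^{\beta-1}/\Gamma(\beta)$, obtaining $\mathscr F[v^{(n)}]=({\rm i}\omega)^n\hat v$ by $n$ integrations by parts (this is precisely where the hypotheses $v^{(k)}(0)=0$ for $k\le n-1$ and the compact supports are consumed), and taming the non-integrable kernel by the Abel regularisation $g_\beta^\varepsilon(t)=g_\beta(t)e^{-\varepsilon t}$ with $\mathscr F[g_\beta^\varepsilon](\omega)=(\varepsilon+{\rm i}\omega)^{-\beta}$. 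The two-sided passage to the limit is sound: pointwise dominated convergence in $t$ uses the compact support of $v^{(n)}$, while dominated convergence in $L^2(\mathbb R)$ on the frequency side uses $|(\varepsilon+{\rm i}\omega)^{-\beta}|\le|\omega|^{-\beta}$ together with $|({\rm i}\omega)^n\hat v(\omega)|\le|\omega|^n\|v\|_{L^1}$ near $\omega=0$ (where $n-\beta=\nu>0$ kills the singularity) and $\mathscr F[v^{(n)}]\in L^2$ at infinity; identifying the two limits via Plancherel and a.e.\ subsequential convergence is legitimate, and it simultaneously delivers ${}_0D_t^\nu v\in L^2(\mathbb R)$. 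Two minor remarks: the multiplicativity $({\rm i}\omega)^{-\beta}({\rm i}\omega)^n=({\rm i}\omega)^{\nu}$ should be checked against the paper's principal-value convention $({\rm i}\omega)^{\alpha}=|\omega|^{\alpha}\mathrm e^{{\rm i}\pi\alpha\,\mathrm{sgn}(\omega)/2}$ from \eqref{sdefinition}, under which the arguments indeed add as you claim; and you tacitly assume $v^{(n)}$ is locally bounded (you say ``continuous''), which the lemma does not state explicitly but which is already presupposed by the pointwise Caputo integral, so this is not a real gap.
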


\section{The direct problem}\label{sec:3}

The goal of this section is to establish the well-posedness of the direct source problem. To achieve this, we convert it into an equivalent problem in the frequency domain. We then proceed to demonstrate estimates for the corresponding Green function, including an analogue of It{\^o} isometry for its stochastic integral with respect to the fBm. Subsequently, we investigate the existence, uniqueness, and regularity of the mild solution for this equivalent problem. Based on these findings, we attain the well-posedness of the time-domain problem \eqref{mainfunction}.

\subsection{The problem in the frequency domain}\label{s3.1}

For a function $v\in L^2(\mathbb R_+)$, we consider its zero extension outside of $\mathbb R_+$, still denoted by $v$, such that its Fourier transform $\hat v$ is well-defined. Taking the Fourier transform of \eqref{mainfunction} and applying Lemma \ref{lm:fracd}, we obtain the following stochastic differential equation in the frequency domain, where $x\in D$ and $\omega\in\mathbb R$:
\begin{equation}\label{freqmaineq}
\left\{
\begin{array}{ll}
\hat{u}_{xx}(x,\omega)-\sum\limits_{k=1}^{n}(\mathrm{i}\omega)^{\alpha_{k}}\hat{u}(x,\omega)=-\hat{f}(\omega)\dot{B}^H(x), \\
\partial_{x}\hat{u}(0,\omega)=0,\quad \hat{u}(1,\omega)=0,
\end{array}
\right.
\end{equation}
where $\hat u$ and $\hat f$ are the Fourier transforms of $u$ and $f$ with respect to $t$, respectively.

Note that the complex number $(\mathrm{i}\omega)^{\alpha_{k}}$ may be multi-valued when $\alpha_k$ is a fractional number. Throughout this paper, we always adopt its principal value and represent it as
\begin{equation}\label{sdefinition}
s:=\sum\limits_{k=1}^{n}(\mathrm{i}\omega)^{\alpha_{k}}            =\sum\limits_{k=1}^{n}|\omega|^{\alpha_{k}}\mathrm{e}^{\mathrm{i}\frac{\pi\alpha_k}{2}\mathrm{sgn}(\omega)}
=\sum\limits_{k=1}^{n}|\omega|^{\alpha_{k}}\left(\cos\left(\frac{\pi\alpha_k}{2}\right)+\text{sgn}(\omega)\mathrm{i}\sin\left(\frac{\pi\alpha_k}{2}\right)\right),
\end{equation}
where $\mathrm{sgn}(\cdot)$ denotes the sign function. The parameter $s$ has the following properties.

\begin{lemma}\label{lm:s}
For the parameter $s$ as defined in \eqref{sdefinition}, it holds that $s=0$ if and only if $\omega=0$. Furthermore,
\begin{equation*}
|s|\geqslant\sin\left(\frac{\pi\alpha_{\max}}{2}\right)|\omega|^{\alpha_{\max}},
\end{equation*}
where $\alpha_{\max}:=\max_{i=1}^n\{\alpha_i : \alpha_i\neq2\}$.
\end{lemma}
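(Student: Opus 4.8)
The plan is to read off both assertions from the imaginary part of $s$. From \eqref{sdefinition},
\[
\operatorname{Im}(s)=\mathrm{sgn}(\omega)\sum_{k=1}^{n}|\omega|^{\alpha_{k}}\sin\left(\frac{\pi\alpha_{k}}{2}\right).
\]
The key elementary fact I would use is that for every $\alpha_{k}\in(0,2]$ the angle $\pi\alpha_{k}/2$ lies in $(0,\pi]$, so $\sin(\pi\alpha_{k}/2)\geqslant 0$, with equality if and only if $\alpha_{k}=2$ (which, given $\alpha_{1}<\alpha_{2}<\cdots<\alpha_{n}$ together with $\alpha_{1}<2$, can happen only for $k=n$). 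Hence every summand above is nonnegative and
\[
|\operatorname{Im}(s)|=\sum_{k=1}^{n}|\omega|^{\alpha_{k}}\sin\left(\frac{\pi\alpha_{k}}{2}\right)=\sum_{k:\,\alpha_{k}\neq2}|\omega|^{\alpha_{k}}\sin\left(\frac{\pi\alpha_{k}}{2}\right),
\]
with the possible $\alpha_{k}=2$ term contributing nothing.

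For the equivalence: if $\omega=0$, then $|\omega|^{\alpha_{k}}=0$ for every $k$ since $\alpha_{k}>0$, so $s=0$. Conversely, if $\omega\neq0$, then because $\alpha_{1}<2$ we have $\sin(\pi\alpha_{1}/2)>0$, so the identity above gives $|\operatorname{Im}(s)|\geqslant|\omega|^{\alpha_{1}}\sin(\pi\alpha_{1}/2)>0$, whence $s\neq0$.

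For the lower bound I would simply discard all but the dominant admissible term. Since $\alpha_{\max}=\max\{\alpha_{i}:\alpha_{i}\neq2\}$ is attained at some index occurring in the last sum and all of its terms are nonnegative,
\[
|\operatorname{Im}(s)|\geqslant|\omega|^{\alpha_{\max}}\sin\left(\frac{\pi\alpha_{\max}}{2}\right),
\]
and combining this with $|s|\geqslant|\operatorname{Im}(s)|$ yields $|s|\geqslant\sin(\pi\alpha_{\max}/2)|\omega|^{\alpha_{\max}}$; the case $\omega=0$ is trivial since both sides vanish.

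I do not expect a genuine obstacle here; the only point worth care is why one excludes $\alpha_{i}=2$ in the definition of $\alpha_{\max}$: the term $\alpha_{n}=2$, when present, drops out of $\operatorname{Im}(s)$, so this argument cannot produce a bound of the form $|s|\gtrsim|\omega|^{2}$, and it is precisely the standing hypothesis $\alpha_{1}<2$ that prevents $\operatorname{Im}(s)$ from degenerating. Attempting instead to use the real part $\sum_{k}|\omega|^{\alpha_{k}}\cos(\pi\alpha_{k}/2)$ would be more delicate, since $\cos(\pi\alpha_{k}/2)$ changes sign on $(0,2)$, so I would avoid it.
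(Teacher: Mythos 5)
Your proof is correct and follows essentially the same route as the paper: both lower-bound $|s|$ by $|\Im(s)|$ and then discard all nonnegative terms except the one at $\alpha_{\max}$. The only (harmless) difference is in the ``only if'' direction of the equivalence, where you argue directly from $\sin(\pi\alpha_1/2)>0$ using the standing hypothesis $\alpha_1<2$, whereas the paper argues by contradiction and disposes of the degenerate case $\alpha_k=2$ for all $k$ by inspecting the real part.
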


\begin{proof}
If $\omega=0$, it is evident that $s=0$. It is adequate to demonstrate that $\omega=0$ when $s=0$.

Assuming, by contradiction, that $\omega\neq0$, it is important to note that $\frac{\pi\alpha_k}{2}\in(0,\pi]$ and, therefore, $\sin\left(\frac{\pi\alpha_k}{2}\right)\geqslant0$ for any $\alpha_k\in(0,2]$ and $k=1,\cdots,n$. If
\[ s=\sum\limits_{k=1}^{n}|\omega|^{\alpha_{k}}\left(\cos\left(\frac{\pi\alpha_k}{2}\right)+\text{sgn}(\omega)\mathrm{i}\sin\left(\frac{\pi\alpha_k}{2}\right)\right)=0,
\]
then its imaginary part $\Im[s]$ must also be zero, i.e.,
\[ \Im[s]=\text{sgn}(\omega)\mathrm{i}\sum\limits_{k=1}^{n}|\omega|^{\alpha_{k}}\sin\left(\frac{\pi\alpha_k}{2}\right)=0,
\]
which implies that $\alpha_k=2$ for all $k=1,\cdots,n$. Substituting $\alpha_k=2$ into the expression for $s$, we obtain
\[ s=\sum\limits_{k=1}^{n}|\omega|^2\cos\left(\pi\right)=-n\omega^2=0,
\]
which leads to a contradiction to the assumption $\omega\neq0$. We then establish the equivalence between $s=0$ and $\omega=0$.

Next, we proceed to estimate the lower bound of $|s|$. Given that $0<\alpha_1<\alpha_2<\cdots<\alpha_n\leqslant2$, we have
\[
\sin\left(\frac{\pi\alpha_k}{2}\right)>0,\quad k=1,\cdots,n-1,
\]
and $\sin\left(\frac{\pi\alpha_n}{2}\right)\geqslant0$. Consequently, we can deduce
\[
|s|\geqslant\left|\Im{[s]}\right|=\sum\limits_{k=1}^n\left|\omega\right|^{\alpha_{k}}\sin\left(\frac{\pi\alpha_k}{2}\right)\geqslant\sin\left(\frac{\pi\alpha_{\max}}{2}\right)|\omega|^{\alpha_{\max}},
\]
which completes the proof.
\end{proof}

\subsection{The Green function}

In order to establish the well-posedness of \eqref{freqmaineq} and derive an explicit solution, we begin by introducing the Green function and its associated energy estimates.
	
Let $G_\omega(x, y)$ be the Green function of \eqref{freqmaineq} for any fixed $\omega\in\mathbb{R}$. It solves the following problem (cf. \cite{gong2021numerical}):
\begin{equation*}
\begin{cases}\partial_{xx} G_\omega(x,y)-s G_\omega(x,y)=\delta(x-y), \\ 
\partial_x G_\omega(0, y)=0, \quad G_\omega(1, y)=0, 
\end{cases}
\end{equation*}
where $x, y\in D$ and the frequency $s$ is given in \eqref{sdefinition}. It is shown in \cite{gong2021numerical} that the Green function $G_\omega(x,y)$ admits the following expression:
\begin{equation}\label{eq:g}
G_\omega(x,y)=\left\{
\begin{aligned}
&\max\{x,y\}-1,\quad&\omega=0, \\ &\frac{\mathrm{e}^{\sqrt{s}(x+y)}+\mathrm{e}^{\sqrt{s}|x-y|}-\mathrm{e}^{\sqrt{s}(2-x-y)}-\mathrm{e}^{\sqrt{s}(2-|x-y|)}}{2 \sqrt{s}\left(1+\mathrm{e}^{2\sqrt{s}}\right)},\quad&\omega\neq0,
\end{aligned}
\right.
\end{equation}
where we choose the principal value for $\sqrt{s}=|s|^{\frac12}\mathrm{e}^{{\rm i}\frac{\arg(s)}2}$ with $\arg(\cdot)$
representing the argument with a radiant principal value in $(-\pi,\pi]$ to ensure that its real part $\Re[\sqrt s]>0$ for $\omega\in\mathbb{R}\setminus\{0\}$.
	
\begin{lemma}\label{gnorm}
For any $x\in \overline{D}$, the Green function $G_\omega$ provided in \eqref{eq:g} satisfies
\begin{equation*}
\sup_{\omega\in\mathbb{R}}\left\|G_\omega(x,\cdot)\right\|_{L^2(D)}\leqslant C,\quad\sup_{\omega\in\mathbb{R}}\left\|G_\omega\right\|_{L^2(D\times D)}\leqslant C.
\end{equation*}
Additionally, as $|s|\to\infty$, the following inequalities hold:
\[
\|G_\omega(x,\cdot)\|_{L^2(D)}\leqslant C|s|^{-\frac12},\quad\|G_\omega\|_{L^2(D\times D)}\leqslant C|s|^{-\frac12},
\]
where $C$ denotes positive constants that are independent of $\omega$ and $x$.
\end{lemma}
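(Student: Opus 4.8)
The plan is to work directly with the explicit Green function \eqref{eq:g}: recast it compactly, reduce the $L^2(D)$ norm to elementary one-dimensional integrals, split the analysis according to the size of $|\omega|$, and finally obtain the $L^2(D\times D)$ bounds by integrating over $x$. For $\omega\neq0$, elementary hyperbolic identities rewrite \eqref{eq:g} as
\[
G_\omega(x,y)=-\frac{\cosh\!\big(\sqrt s\,\min\{x,y\}\big)\,\sinh\!\big(\sqrt s\,(1-\max\{x,y\})\big)}{\sqrt s\,\cosh(\sqrt s)},
\]
the resolvent kernel built from the homogeneous solutions $\cosh(\sqrt s\,x)$ (satisfying the Neumann condition at $0$) and $\sinh(\sqrt s(1-x))$ (satisfying the Dirichlet condition at $1$), whose Wronskian equals $-\sqrt s\cosh(\sqrt s)$. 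Writing $\sqrt s=a+\mathrm ib$ with $a=\Re[\sqrt s]>0$ and using $|\mathrm e^{\sqrt s\xi}|=\mathrm e^{a\xi}$ together with the identities $|\cosh(\sqrt s\xi)|^2=\tfrac12\big(\cosh(2a\xi)+\cos(2b\xi)\big)$ and $|\sinh(\sqrt s\xi)|^2=\tfrac12\big(\cosh(2a\xi)-\cos(2b\xi)\big)$, splitting the $y$-integral at $y=x$ expresses $\|G_\omega(x,\cdot)\|_{L^2(D)}^2$ as an explicit combination of the ratios $|\sinh(\sqrt s(1-x))|^2/|\cosh\sqrt s|^2$ and $|\cosh(\sqrt s x)|^2/|\cosh\sqrt s|^2$ with the integrals $\tfrac{\sinh 2a\xi}{4a}\pm\tfrac{\sin 2b\xi}{4b}$; the whole estimate then reduces to controlling these quantities.

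The case $\omega=0$ is immediate, since $G_0(x,y)=\max\{x,y\}-1\in[-1,0]$. For $|\omega|\leqslant M$ the parameter $s$ ranges over a compact set avoiding the origin (Lemma \ref{lm:s}), so $\sqrt s$ stays in a compact subset of the open right half-plane on which $|\cosh(\cdot)|$ is bounded below; combined with $|\cosh(\sqrt s\xi)|\leqslant\cosh(a\xi)$ and $|\sinh(\sqrt s\xi)/\sqrt s|\leqslant\sinh(a\xi)/a$, this gives a uniform pointwise bound on $|G_\omega|$, hence on both norms. For $|\omega|>M$ with $\alpha_n<2$ and $M$ sufficiently large, $\arg s$ is close to $\tfrac{\pi\alpha_n}{2}\in(0,\pi)$, so both $a$ and $|b|$ are $\geqslant c|s|^{1/2}$; then $|\cosh\sqrt s|\geqslant\sinh a\geqslant\tfrac14\mathrm e^{a}$ bounds the two ratios above by $C\mathrm e^{a(\xi-1)}\leqslant C$, and inserting this together with $\tfrac{\sinh 2a\xi}{4a}\leqslant\tfrac{\mathrm e^{2a\xi}}{8a}$ into the formula yields $\|G_\omega(x,\cdot)\|_{L^2(D)}^2\leqslant C\big(\tfrac{1}{a|s|}+\tfrac{1}{|b|\,|s|}\big)\leqslant C|s|^{-1}$.

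The hardest regime is $\alpha_n=2$ with $|\omega|>M$: here the $|\omega|^2$ term forces $\Re s\approx-|\omega|^2$, so $\arg s\to\pi$ and $a=|s|^{1/2}\cos(\tfrac{\arg s}{2})\to0$, and $\cosh(\sqrt s)$ is no longer bounded below --- it is small precisely when $b$ is close to a half-integer multiple of $\pi$ --- so a pointwise bound on the ratios fails and one must use the $L^2$ average. In this case I would use $|\cosh\sqrt s|^2=\sinh^2a+\cos^2b\geqslant a^2$, the crude bounds $|\sinh(\sqrt s(1-x))|^2,\,|\cosh(\sqrt s x)|^2\leqslant\cosh^2a$, and $\tfrac{\sinh 2a\xi}{4a}\leqslant\tfrac{\sinh 2a}{4a}\leqslant C$ (valid since $a\leqslant1$ here), obtaining $\|G_\omega(x,\cdot)\|_{L^2(D)}^2\leqslant C/(a^2|s|)$; then the identity $a^2|s|=\tfrac{(\Im s)^2}{2}\cdot\tfrac{|s|}{|s|+|\Re s|}\geqslant\tfrac14(\Im s)^2$ (from $a^2=\tfrac12(|s|-|\Re s|)$ and $|\Re s|\leqslant|s|$), together with the lower bound $|\Im s|\geqslant\sin(\tfrac{\pi\alpha_{\max}}{2})|\omega|^{\alpha_{\max}}$ from the proof of Lemma \ref{lm:s} (the $\alpha_k=2$ term contributes nothing to $\Im s$), delivers both the uniform bound and the decay.

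Finally, $\|G_\omega\|_{L^2(D\times D)}^2=\int_D\|G_\omega(x,\cdot)\|_{L^2(D)}^2\,\mathrm dx$, so the $L^2(D\times D)$ estimates follow at once from the pointwise-in-$x$ estimates, the measure of $D=(0,1)$ being $1$. I expect the main obstacle to be the regime $\alpha_n=2$ just described: because $\Re[\sqrt s]$ collapses to $0$ while $|s|\to\infty$, the exponential control of $\cosh(\sqrt s\xi)/\cosh(\sqrt s)$ that makes the other cases routine is lost, and the decay has to be squeezed out of the sharp relation between $a^2|s|$ and $(\Im s)^2$ together with the imaginary-part lower bound supplied by Lemma \ref{lm:s}.
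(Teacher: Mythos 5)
Your argument is sound and, where it overlaps with the paper's, is essentially an explicit version of the same computation. The paper never passes to the hyperbolic form: it starts from the crude pointwise bound \eqref{eq:gw} (four exponentials over $|\sqrt{s}(1+\mathrm{e}^{2\sqrt{s}})|^2$), integrates in $y$ to get $\|G_\omega(x,\cdot)\|_{L^2(D)}^2\leqslant |s|^{-1}h(\Re[\sqrt{s}])$ with $h$ as in \eqref{eq:h}, shows $h\to 0$ as $\Re[\sqrt{s}]\to\infty$ and $h\to 1$ as $\Re[\sqrt{s}]\to 0$, asserts uniform boundedness of $h$ by citing \cite{gong2021numerical}, and handles bounded $|s|$ by continuity of $G_\omega(x,y)$ at $s=0$. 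Your quantities are the same objects in disguise: since $|1+\mathrm{e}^{2\sqrt{s}}|^2=4\mathrm{e}^{2a}(\sinh^2 a+\cos^2 b)$ with $a=\Re[\sqrt{s}]$, $b=\Im[\sqrt{s}]$, one has $h=\sinh(2a)/\bigl(2a(\sinh^2 a+\cos^2 b)\bigr)$, and your case analysis on $\arg s$ is precisely the analysis of where this can be large. One small slip: for $0<|\omega|\leqslant M$ the set of values of $s$ is \emph{not} bounded away from the origin (Lemma \ref{lm:s} only gives $|s|\gtrsim|\omega|^{\alpha_{\max}}$), so ``$\sqrt{s}$ stays in a compact subset of the open right half-plane'' is not literally true; but your bounds $|\cosh(\sqrt{s}\xi)|\leqslant\cosh(a\xi)$ and $|\sinh(\sqrt{s}\xi)/\sqrt{s}|\leqslant\sinh(a\xi)/a$ extend continuously to $s=0$, so the uniform bound on that range survives.

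The substantive point is your last regime, $\alpha_n=2$ with $|\omega|\to\infty$. There your final estimate is $\|G_\omega(x,\cdot)\|_{L^2(D)}^2\lesssim(\Im s)^{-2}\sim|\omega|^{-2\alpha_{\max}}$, which proves the uniform bound (the first display of the lemma) but \emph{not} the stated rate $C|s|^{-1}\sim|\omega|^{-2}$ when $\alpha_{\max}<1$; you should say this explicitly rather than claiming the argument ``delivers the decay.'' In fact your own analysis shows the stated rate is unattainable there: at the resonances $\cos(\Im[\sqrt{s}])\approx 0$ one has $|\cosh\sqrt{s}|^2\approx\sinh^2 a\approx a^2$, whence $\|G_\omega(0,\cdot)\|_{L^2(D)}^2\asymp(|s|a^2)^{-1}\asymp(\Im s)^{-2}$, which exceeds $|s|^{-1}$ when $\alpha_{\max}<1$. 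Correspondingly $h\asymp a^{-2}$ at resonance, so the paper's appeal to \cite{gong2021numerical} --- where $0<\alpha<1$ forces $\arg s<\pi/2$ and hence $a\gtrsim|s|^{1/2}$ --- does not transfer, and the paper's own proof of the $|s|^{-1/2}$ rate is incomplete exactly where yours stops short. Your weaker bound is still sufficient for everything downstream: Corollary \ref{Equidlemma} uses only the uniform bound, and Theorem \ref{instability} needs only $R(\omega)\lesssim|\omega|^{-\alpha_{\max}}$ or $R(\omega)\to 0$, both of which follow from $(\Im s)^{-2}\sim|\omega|^{-2\alpha_{\max}}$.
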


\begin{proof}
It is only necessary to demonstrate that the above results apply to $\|G_\omega(x,\cdot)\|_{L^2(D)}$, as the results for $\|G_\omega\|_{L^2(D\times D)}$ follow directly.

If $\omega=0$, it is evident that for any $x\in \overline{D}$
\[
\|G_0(x,\cdot)\|_{L^2(D)}^2=\int_0^x(x-1)^2{\rm d}y+\int_x^1(y-1)^2{\rm d}y=\frac{(x-1)^2(2x+1)}3\leqslant\frac13.
\]

If $\omega\neq0$, using \eqref{eq:g} and noting the inequality for $x, y\in(0, 1)$ (cf. \cite[$(3.4)$]{gong2021numerical}):
\begin{equation}\label{eq:gw}
|G_\omega(x, y)|^2\leq \frac{\mathrm{e}^{2\Re[\sqrt{s}](x+y)}+\mathrm{e}^{2\Re[\sqrt{s}]|x-y|}+\mathrm{e}^{2\Re[\sqrt{s}](2-x-y)}+\mathrm{e}^{2\Re[\sqrt{s}](2-|x-y|)}}{|\sqrt{s}\left(1+\mathrm{e}^{2\sqrt{s}}\right)|^2},
\end{equation}
where $\Re[\cdot]$ denotes the real part of a complex number, we deduce from a simple calculation that 
\begin{align}\label{eq:h}
\|G_\omega(x,\cdot)\|_{L^2(D)}^2=&\int_0^1\left|G_\omega(x, y)\right|^2{\rm{d}}y\notag\\
\leqslant &~|s|^{-1}\frac1{|1+\mathrm{e}^{2\sqrt{s}}|^2}\frac{\mathrm{e}^{4\Re[\sqrt{s}]}-1}{\Re[\sqrt{s}]}\notag\\	=&~|s|^{-1}\frac{1}{1+\mathrm{e}^{4\Re{[\sqrt{s}]}}+2\mathrm{e}^{2\Re{[\sqrt{s}]}}\cos\left(2\Re{[\sqrt{s}]}\tan\left(\frac{\arg(s)}2\right)\right)}\frac{\mathrm{e}^{4\Re{[\sqrt{s}]}}-1}{\Re[\sqrt{s}]}\notag\\
=&:|s|^{-1}h\left(\Re[\sqrt{s}]\right),
\end{align}
where $h$ is a positive function for any $\Re[\sqrt{s}]>0$. Note that
\begin{equation}\label{eq:hbound}
\begin{aligned}
\left|h\left(\Re[\sqrt{s}]\right)\right| =&\left|\frac{1}{\Re[\sqrt{s}]}\frac{\mathrm{e}^{4\Re{[\sqrt{s}]}}-1}{1+\mathrm{e}^{4\Re{[\sqrt{s}]}}+2\mathrm{e}^{2\Re{[\sqrt{s}]}}\cos\left(2\Re{[\sqrt{s}]}\tan\left(\frac{\arg(s)}2\right)\right)}\right|\\ \leqslant&\left|\frac{1}{\Re[\sqrt{s}]}\frac{\mathrm{e}^{4\Re{[\sqrt{s}]}}-1}{1+\mathrm{e}^{4\Re{[\sqrt{s}]}}-2\mathrm{e}^{2\Re{[\sqrt{s}]}}}\right|\\ =&\left|\frac{1}{\Re[\sqrt{s}]}\frac{\mathrm{e}^{2\Re{[\sqrt{s}]}}+1}{\mathrm{e}^{2\Re{[\sqrt{s}]}}-1}\right|\leqslant\frac{2}{\Re[\sqrt{s}]}\to0,\quad\Re[\sqrt{s}]\to\infty
\end{aligned}
\end{equation}
and
\[		\lim_{\Re[\sqrt{s}]\to0}h\left(\Re[\sqrt{s}]\right)=1.
\]
Following the same procedure as in \cite[Lemma 3.1]{gong2021numerical}, we can obtain the uniform boundedness of the function $h$ over $[0,\infty)$. Hence,
\begin{equation}\label{eq:gbound}
\|G_\omega(x,\cdot)\|_{L^2(D)}\leqslant C|s|^{-\frac12}\quad\forall~\omega\in\mathbb R.
\end{equation}
		
On the other hand, for any fixed $x\neq y$, when considering $G_\omega(x,y)$ as a function of $s$, it is analytic with respect to $s$ and is continuous at $s=\omega=0$, implying that
\[
\lim_{\omega\to0}G_\omega(x,y)=G_0(x,y).
\]
As a consequence,
\[
\|G_\omega(x,\cdot)\|_{L^2(D)}\leqslant C, \quad |s|\ll 1,
\]
which, together with \eqref{eq:gbound}, finishes the proof.
\end{proof}

For the sake of convenience in notation, we introduce a function $\mathscr{T}G_\omega$ defined in $\mathbb R\times\mathbb R$ as follows:
\begin{equation*}
\mathscr{T}G_\omega(x,y):=
\left\{
\begin{aligned}
& \partial_y G_\omega(x,y), \quad&x\in D,~y\in[0,x)\cup(x,1],\\
& 0, \quad&\text{otherwise}.
\end{aligned}
\right.
\end{equation*}
	
\begin{lemma}\label{gynorm}
The function $\mathscr{T}G_\omega\in L^2(\mathbb R\times\mathbb R)$ is uniformly bounded with respect to $\omega\in\mathbb R$, satisfying
\[
\sup_{\omega\in\mathbb R}\|\mathscr T G_\omega\|_{L^2(\mathbb R\times\mathbb R)}\leqslant C.
\]
Moreover, for any fixed $x\in D$, it holds that
\[
\sup_{\omega\in\mathbb R}\|\mathscr T G_\omega(x,\cdot)\|_{L^2(\mathbb R)}\leqslant C.
\]
In the above expressions, the positive constants denoted by $C$ are independent of both $\omega$ and $x$.
\end{lemma}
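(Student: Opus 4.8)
The plan is to differentiate the explicit formula \eqref{eq:g} with respect to $y$, bound the resulting expression pointwise in a manner analogous to \eqref{eq:gw}, and then integrate to obtain an $L^2$-bound uniform in $\omega$. First I would record that for $\omega\neq 0$ and $y\neq x$,
\[
\partial_y G_\omega(x,y)=\frac{\sqrt s\,\mathrm e^{\sqrt s(x+y)}+\mathrm{sgn}(y-x)\sqrt s\,\mathrm e^{\sqrt s|x-y|}+\sqrt s\,\mathrm e^{\sqrt s(2-x-y)}+\mathrm{sgn}(y-x)\sqrt s\,\mathrm e^{\sqrt s(2-|x-y|)}}{2\sqrt s\bigl(1+\mathrm e^{2\sqrt s}\bigr)},
\]
so that the factor $2\sqrt s$ in the numerator cancels the $\sqrt s$ in the denominator, leaving $\mathscr TG_\omega(x,y)$ as a combination of four exponentials divided by $2(1+\mathrm e^{2\sqrt s})$. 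Hence, exactly as in \eqref{eq:gw}, one gets
\[
|\mathscr TG_\omega(x,y)|^2\leqslant\frac{\mathrm e^{2\Re[\sqrt s](x+y)}+\mathrm e^{2\Re[\sqrt s]|x-y|}+\mathrm e^{2\Re[\sqrt s](2-x-y)}+\mathrm e^{2\Re[\sqrt s](2-|x-y|)}}{|1+\mathrm e^{2\sqrt s}|^2},\quad x\in D,\ y\in[0,x)\cup(x,1],
\]
with $\mathscr TG_\omega(x,y)=0$ outside $D\times(0,1)$.

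Next I would integrate this pointwise bound in $y$ over $(0,1)$ and then in $x$ over $(0,1)$. Each of the four exponential terms integrates, over $y\in(0,1)$, to something of size $\mathrm e^{4\Re[\sqrt s]}/\Re[\sqrt s]$ at worst (the same computation that produced \eqref{eq:h}, except now \emph{without} the extra $|s|^{-1}$ that came there from the $1/(2\sqrt s)$ prefactor). Therefore
\[
\|\mathscr TG_\omega(x,\cdot)\|_{L^2(\mathbb R)}^2\leqslant C\,\frac{1}{|1+\mathrm e^{2\sqrt s}|^2}\,\frac{\mathrm e^{4\Re[\sqrt s]}-1}{\Re[\sqrt s]}=C\,\Re[\sqrt s]\,h\bigl(\Re[\sqrt s]\bigr),
\]
where $h$ is exactly the function introduced in \eqref{eq:h}. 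By \eqref{eq:hbound} we know $h(t)\leqslant 2/t$ as $t\to\infty$, so $t\,h(t)\leqslant 2$ for large $t$; and since $h(t)\to 1$ as $t\to 0$ and $h$ is continuous and bounded on $[0,\infty)$ (as already established in the proof of Lemma \ref{gnorm}, following \cite[Lemma 3.1]{gong2021numerical}), the product $t\,h(t)$ stays bounded on every compact subset of $[0,\infty)$, in particular near $t=0$. Combining these gives $\sup_{t\geqslant 0} t\,h(t)\leqslant C$, hence $\sup_{\omega\in\mathbb R}\|\mathscr TG_\omega(x,\cdot)\|_{L^2(\mathbb R)}\leqslant C$ with $C$ independent of $x$ and $\omega$; integrating once more in $x\in(0,1)$ yields $\sup_{\omega}\|\mathscr TG_\omega\|_{L^2(\mathbb R\times\mathbb R)}\leqslant C$.

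The one subtlety I anticipate — the main obstacle — is the behavior near $s=0$ (equivalently $\omega=0$, by Lemma \ref{lm:s}), because there $\Re[\sqrt s]\to 0$ and the prefactor $\Re[\sqrt s]$ in $\Re[\sqrt s]\,h(\Re[\sqrt s])$ is small while $h\to 1$, so the bound is in fact \emph{better} there, not worse; the genuine care is needed to verify uniformity of the passage $h(t)\to 1$ and that the cosine term in the denominator of $h$ (see \eqref{eq:h}--\eqref{eq:hbound}) does not cause a blow-up for intermediate values of $\Re[\sqrt s]$ — this is handled by the lower bound $|1+\mathrm e^{2\sqrt s}|^2\geqslant(\mathrm e^{2\Re[\sqrt s]}-1)^2$ used in \eqref{eq:hbound}, which remains valid here. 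Note also that, unlike in Lemma \ref{gnorm}, there is no decay factor $|s|^{-1/2}$: the derivative in $y$ removes exactly that gain, which is why only uniform boundedness (and not decay as $|s|\to\infty$) is asserted. Finally, since $\mathscr TG_\omega$ vanishes off the bounded set $D\times(0,1)$, there is no integrability issue at infinity in either variable, so $\mathscr TG_\omega\in L^2(\mathbb R\times\mathbb R)$ is immediate once the bound on the unit square is in hand.
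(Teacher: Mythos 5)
Your proposal follows essentially the same route as the paper: differentiate the explicit Green function in $y$ (the cancellation of the $\sqrt s$ prefactor is exactly what the paper records), bound the four exponentials pointwise as in \eqref{eq:gw}, integrate in $y$ and then $x$, and reduce everything to the boundedness of the function $h$ from \eqref{eq:h}. One slip to fix: the quantity $\frac{1}{|1+\mathrm e^{2\sqrt s}|^2}\frac{\mathrm e^{4\Re[\sqrt s]}-1}{\Re[\sqrt s]}$ is $h(\Re[\sqrt s])$ itself, not $\Re[\sqrt s]\,h(\Re[\sqrt s])$, so the relevant fact is the uniform boundedness of $h$ on $[0,\infty)$ (already established in the proof of Lemma \ref{gnorm}) rather than of $t\mapsto t\,h(t)$; since both happen to be bounded, your conclusion is unaffected. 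For completeness you should also treat $\omega=0$ directly (there $\mathscr TG_0(x,y)$ equals $1$ for $y\in(x,1]$ and $0$ otherwise, giving $\|\mathscr TG_0(x,\cdot)\|_{L^2(\mathbb R)}^2=1-x$), as the paper does, rather than only arguing by continuity near $s=0$.
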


\begin{proof}
It suffices to consider $x,y\in D$. If $\omega=0$, then
\begin{equation*}
\mathscr{T}G_0(x,y)=\left\{
\begin{aligned}
&0,\quad y\in[0,x],\\
&1,\quad y\in(x,1].
\end{aligned}
\right.
\end{equation*}
A simple calculation yields
\[
\|\mathscr T G_0(x,\cdot)\|_{L^2(\mathbb R)}^2=\int_x^11dy=1-x\leqslant1
\]
and
\[
\|\mathscr{T} G_0\|_{L^{2}(\mathbb R\times\mathbb R)}^2
=\|\mathscr{T} G_0\|_{L^{2}(D\times D)}^2=\int_0^1\int_x^11^2\mathrm{d}y\mathrm{d}x
=\frac{1}{2}.
\]

If $\omega\neq0$, then for $x\in D,~y\in [0,x)\cup(x,1]$, it holds that
\begin{equation*}		\mathscr{T}G_\omega(x,y)=\frac{\mathrm{e}^{\sqrt{s}(x+y)} - \mathrm{sgn}(x-y) \mathrm{e}^{\sqrt{s}|x-y|} + \mathrm{e}^{\sqrt{s}(2-x-y)} - \mathrm{sgn}(x-y) \mathrm{e}^{\sqrt{s}(2-|x-y|)}}{2 \left(1+\mathrm{e}^{2 \sqrt{s}}\right)}.
\end{equation*}
Similarly, we may obtain from \eqref{eq:gw}--\eqref{eq:h} that 
\begin{equation}\label{eq:Tg}
\begin{aligned}
\|\mathscr{T}G_\omega(x,\cdot)\|_{L^{2}(\mathbb R)}^2\leqslant&~\frac{1}{\left|1+\mathrm{e}^{2\sqrt{s}}\right|^2}\frac{\mathrm{e}^{4\Re[\sqrt{s}]}-1}{\Re[\sqrt{s}]}\\ =&~\frac{1}{1+\mathrm{e}^{4\Re{[\sqrt{s}]}}+2\mathrm{e}^{2\Re{[\sqrt{s}]}}\cos\left(2\Re{[\sqrt{s}]}\tan\left(\frac{\arg(s)}2\right)\right)}\frac{\mathrm{e}^{4\Re{[\sqrt{s}]}}-1}{\Re[\sqrt{s}]}\\
=&~h\left(\Re[\sqrt{s}]\right),
\end{aligned}
\end{equation}
and
\[
\|\mathscr{T}G_\omega\|_{L^{2}(\mathbb{R}\times\mathbb{R})}^2=\int_D\|\mathscr T G_\omega(x,\cdot)\|_{L^2(\mathbb R)}^2{\rm d}x\leqslant\int_D\frac{1}{\left|1+\mathrm{e}^{2\sqrt{s}}\right|^2}\frac{\mathrm{e}^{4\Re[\sqrt{s}]}-1}{\Re[\sqrt{s}]}{\rm d}x = h\left(\Re[\sqrt{s}]\right),
\]
where $h$ is defined in \eqref{eq:h} and is uniformly bounded. Hence, there exists a constant $C>0$ independent of $\omega$ and $x$ such that
\begin{equation*}
\left\|\mathscr{T} G_\omega(x,\cdot)\right\|_{L^2(\mathbb R)}^2\leqslant h(\Re[\sqrt{s}])\leqslant C
\end{equation*}
and
\begin{equation*}
\left\|\mathscr{T} G_\omega\right\|_{L^2(\mathbb{R}\times\mathbb R)}^2\leqslant h(\Re[\sqrt{s}])\leqslant C,
\end{equation*}
which completes the proof.
\end{proof}
	
For any fixed $x\in D$, we denote by $\tilde G_\omega(x,\cdot)$ the zero extension of $G_\omega(x,\cdot)$ outside of $D$. Similarly, we denote $\hat{\tilde G}_\omega(x,\cdot)$ as the Fourier transform of $\tilde G_\omega(x,\cdot)$ with respect to the second variable, and apply the same notation to $\widehat{\mathscr T G}_\omega(x,\cdot)$. Recall the definition of the fractional Sobolev space $\mathscr H^\gamma(\mathbb R)$ with $\gamma\in\mathbb R$ (cf. \cite{Hitchhiker2012Eleonora}):
\begin{equation*}
\mathscr{H}^\gamma\left(\mathbb{R}\right):=\left\{u \in L^2\left(\mathbb{R}\right): \int_{\mathbb{R}}\left(1+|\zeta|^2\right)^\gamma|\hat u(\zeta)|^2 \mathrm d \zeta<\infty\right\},
\end{equation*}
which is equipped with the norm
\[
\|u\|_{\mathscr H^\gamma(\mathbb{R})}:=\left(\int_{\mathbb{R}}\left(1+|\zeta|^{2}\right)^\gamma|\hat u(\zeta)|^2 \mathrm d \zeta\right)^{\frac12}.
\]

Based on the above notations, we establish the following It\^o isometry type equality for the stochastic integral of $G_\omega$ with respect to the fBm. This result is derived using a procedure similar to the one employed in \cite[Chapter 2.2]{dasgupta1997fractional}.

\begin{lemma}\label{isometry}
For any fixed $x\in \overline{D}$ and with $H\in(0,1)$, the stochastic integral $\int_D G_\omega(x,y)\mathrm{d}B^H(y)$ is well-defined and satisfies
\begin{equation}\label{fracequlformula}
\mathbb{E}\left|\int_D G_\omega(x, y) \mathrm{d} B^H(y)\right|^2=C_H^2\int_\mathbb{R}\frac{|\hat{\tilde{G}}_\omega(x,\zeta)|^2}{|\zeta|^{2H-1}}\mathrm{d}\zeta,
\end{equation}
where $C_H$ is defined in \eqref{eq:rep}.
\end{lemma}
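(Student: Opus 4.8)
The plan is to reduce the stochastic integral against $B^H$ to a deterministic integral against the complex Gaussian measure $\widetilde W$ via the harmonizable representation \eqref{eq:rep}, and then apply the It\^o isometry for integrals against $\widetilde W$. First I would approximate $G_\omega(x,\cdot)$ by a simple (step) function $\varphi=\sum_j c_j\mathbf 1_{(a_j,b_j]}$ on $D$, for which the stochastic integral $\int_D\varphi\,\mathrm dB^H=\sum_j c_j\bigl(B^H(b_j)-B^H(a_j)\bigr)$ is defined in the obvious way. Substituting \eqref{eq:rep} for each increment $B^H(b_j)-B^H(a_j)=C_H\int_{\mathbb R}\frac{e^{\mathrm i\lambda b_j}-e^{\mathrm i\lambda a_j}}{\mathrm i\lambda|\lambda|^{H-1/2}}\,\mathrm d\widetilde W(\lambda)$ and using linearity, one gets
\[
\int_D\varphi\,\mathrm dB^H=C_H\int_{\mathbb R}\frac{\widehat{\tilde\varphi}(-\lambda)\cdot(\text{something})}{\cdots}\,\mathrm d\widetilde W(\lambda);
\]
more precisely, $\sum_j c_j(e^{\mathrm i\lambda b_j}-e^{\mathrm i\lambda a_j})/(\mathrm i\lambda)=\int_{\mathbb R}\tilde\varphi(y)e^{\mathrm i\lambda y}\,\mathrm dy=\widehat{\tilde\varphi}(-\lambda)$, where $\tilde\varphi$ is the zero extension of $\varphi$. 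Hence $\int_D\varphi\,\mathrm dB^H=C_H\int_{\mathbb R}\widehat{\tilde\varphi}(-\lambda)|\lambda|^{-(H-1/2)}\,\mathrm d\widetilde W(\lambda)$. Applying the It\^o isometry for $\widetilde W$ (together with the reality-type symmetry constraints on $W_1,W_2$ stated after \eqref{eq:rep}, which make the cross terms vanish and give the factor matching a standard Gaussian measure) yields
\[
\mathbb E\Bigl|\int_D\varphi\,\mathrm dB^H\Bigr|^2=C_H^2\int_{\mathbb R}\frac{|\widehat{\tilde\varphi}(\lambda)|^2}{|\lambda|^{2H-1}}\,\mathrm d\lambda,
\]
using $|\widehat{\tilde\varphi}(-\lambda)|=|\widehat{\tilde\varphi}(\lambda)|$ up to the symmetry of the measure. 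This is exactly \eqref{fracequlformula} for simple functions.

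Next I would pass to the limit. The right-hand side defines a (semi)norm $\|\varphi\|_H^2:=C_H^2\int_{\mathbb R}|\widehat{\tilde\varphi}(\lambda)|^2|\lambda|^{1-2H}\,\mathrm d\lambda$ on simple functions, and the space of stochastic integrands is the completion of simple functions under $\|\cdot\|_H$; the map $\varphi\mapsto\int_D\varphi\,\mathrm dB^H$ is then an isometry from this completion into $L^2(\Omega)$. So it remains to check that $G_\omega(x,\cdot)$ lies in this completion, i.e. that $\|G_\omega(x,\cdot)\|_H<\infty$ and that $G_\omega(x,\cdot)$ can be approximated by simple functions in this norm. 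For finiteness, I would split the frequency integral into $|\lambda|\le1$ and $|\lambda|>1$. On $|\lambda|>1$ the weight $|\lambda|^{1-2H}$ is dominated by $|\lambda|^2$ (since $H>-1/2$), so $\int_{|\lambda|>1}|\widehat{\tilde G}_\omega(x,\lambda)|^2|\lambda|^{1-2H}\,\mathrm d\lambda\le\int_{\mathbb R}|\widehat{\tilde G}_\omega(x,\lambda)|^2(1+|\lambda|^2)\,\mathrm d\lambda=\|\tilde G_\omega(x,\cdot)\|_{\mathscr H^1(\mathbb R)}^2$; this is finite because $\tilde G_\omega(x,\cdot)$ is piecewise smooth, supported in $\overline D$, with a jump only in the derivative at $y=x$ (the function itself is continuous and vanishes at the endpoints in the relevant sense, so $\tilde G_\omega(x,\cdot)\in H^1(\mathbb R)$ with $\partial_y\tilde G_\omega(x,\cdot)=\mathscr TG_\omega(x,\cdot)$ a.e.), and Lemmas \ref{gnorm} and \ref{gynorm} give $\|\tilde G_\omega(x,\cdot)\|_{L^2}+\|\mathscr TG_\omega(x,\cdot)\|_{L^2}\le C$. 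On $|\lambda|\le1$, when $H\ge1/2$ the weight is bounded and $\int_{|\lambda|\le1}|\widehat{\tilde G}_\omega(x,\lambda)|^2\,\mathrm d\lambda\le\|\tilde G_\omega(x,\cdot)\|_{L^2}^2\le C$; when $0<H<1/2$ the weight $|\lambda|^{1-2H}$ is actually bounded on $|\lambda|\le1$ as well (the exponent $1-2H>0$), so this case is in fact easier near $\lambda=0$, not harder. Thus $\|G_\omega(x,\cdot)\|_H\le C$ uniformly in $\omega$ and $x$, and density of simple functions in $H^1(\mathbb R)\hookrightarrow$ (the $\|\cdot\|_H$-completion) gives the required approximation; continuity of the isometry then transfers \eqref{fracequlformula} from $\varphi$ to $G_\omega(x,\cdot)$.

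The main obstacle I anticipate is not the finiteness estimate but making the reduction to $\widetilde W$ fully rigorous: one must justify interchanging the (finite) sum over $j$ with the stochastic integral in \eqref{eq:rep}, verify that the symmetry properties $W_1(A)=W_1(-A)$, $W_2(A)=-W_2(-A)$ indeed produce the clean isometry constant (so that no extra factor of $2$ or cross-covariance term survives), and confirm that the $\|\cdot\|_H$-completion of simple functions is the correct domain of the stochastic integral with respect to $B^H$ for the full range $H\in(0,1)$ — in particular that for $0<H<1/2$ no subtle cancellation is lost when one writes the integrand only through $|\widehat{\tilde G}_\omega(x,\lambda)|^2$. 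I would handle this by citing the construction in \cite[Chapter 2.2]{dasgupta1997fractional} (and \cite{ST94}) for the abstract isometry and spelling out only the computation $\sum_j c_j(e^{\mathrm i\lambda b_j}-e^{\mathrm i\lambda a_j})/(\mathrm i\lambda)=\widehat{\tilde\varphi}(-\lambda)$ explicitly, since that is the one identity specific to our setting. The remaining bookkeeping (endpoint continuity of $\tilde G_\omega(x,\cdot)$, the $H^1$ regularity, uniformity in $\omega$) then follows directly from \eqref{eq:g}, Lemma \ref{gnorm}, and Lemma \ref{gynorm}.
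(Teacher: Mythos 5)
Your overall strategy (pass to the harmonizable representation, prove the isometry for simple functions, then extend by completion) is the same standard construction the paper invokes via \cite{dasgupta1997fractional}, and your treatment is fine for $H\in[\frac12,1)$, where the weight $|\lambda|^{1-2H}$ is bounded at infinity and integrable at the origin, so $\tilde G_\omega(x,\cdot)\in L^1\cap L^2$ suffices (the paper's Case 1). The gap is in your finiteness argument for $H\in(0,\frac12)$: you claim $\tilde G_\omega(x,\cdot)\in \mathscr H^1(\mathbb R)$ because ``the function itself is continuous and vanishes at the endpoints in the relevant sense.'' It does not. The boundary conditions are Neumann at one endpoint and Dirichlet at the other; by the symmetry of \eqref{eq:g} one has $G_\omega(x,1)=0$ but $G_\omega(x,0)\neq 0$ in general (e.g.\ $G_0(x,0)=x-1$). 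Hence the zero extension $\tilde G_\omega(x,\cdot)$ has a jump at $y=0$, its distributional derivative contains a Dirac mass there, $|\hat{\tilde G}_\omega(x,\zeta)|$ decays only like $|\zeta|^{-1}$, and $\|\tilde G_\omega(x,\cdot)\|_{\mathscr H^1(\mathbb R)}=\infty$. Your bound $\int_{|\lambda|>1}|\hat{\tilde G}_\omega(x,\lambda)|^2|\lambda|^{1-2H}\,\mathrm d\lambda\le\|\tilde G_\omega(x,\cdot)\|_{\mathscr H^1(\mathbb R)}^2$ is therefore vacuous. This is precisely the obstruction the paper flags when it says it cannot directly apply Case 2 of \cite{dasgupta1997fractional} because $\tilde G_\omega(x,\cdot)\notin\mathscr H^1(\mathbb R)$.

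The quantity you need is in fact finite, but the proof must exploit the weaker weight rather than route through $\mathscr H^1$: the paper shows $\tilde G_\omega(x,\cdot)\in\mathscr H^{\frac12-H}(\mathbb R)$ by integrating by parts separately on $(0,x)$ and $(x,1)$, which yields $\zeta\hat{\tilde G}_\omega(x,\zeta)=G_\omega(x,0)+\widehat{\mathscr TG}_\omega(x,\zeta)$; the constant boundary term is then controlled because $|\zeta|^{-2H-1}$ is integrable at infinity, and the $\widehat{\mathscr TG}_\omega$ term is controlled by Lemma \ref{gynorm} (see \eqref{eq:gseminorm}). The density and completion step must accordingly be run in $\mathscr H^{\frac12-H}(\mathbb R)$ --- the paper approximates by $C_0^\infty$ functions rather than step functions, which, note, are themselves not in $\mathscr H^1(\mathbb R)$ either --- and not in $\mathscr H^1$. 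With that repair your argument coincides with the paper's.
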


\begin{proof}
For the case where $H=\frac{1}{2}$ and $C_H^2=\frac{1}{2\pi}$, the result in \eqref{fracequlformula} follows from the classical It\^{o} isometry and Parseval's theorem, which states that
\[
\mathbb E\left|\int_D G_\omega(x, y) \mathrm{d} B^{\frac12}(y)\right|^2=\int_{\mathbb R}|\tilde G_\omega(x,y)|^2{\rm d}y=\frac1{2\pi}\int_{\mathbb R}|\hat{\tilde{G}}_\omega(x,\zeta)|^2{\rm d}\zeta.
\]

For $H\in(\frac{1}{2},1)$, it is shown in \cite[Section 2.2, Case 1]{dasgupta1997fractional} that \eqref{fracequlformula} holds if $\tilde G_\omega(x,\cdot)\in L^1(\mathbb R)\cap L^2(\mathbb R)$ for any $x\in \overline{D}$. Given that $\tilde G_\omega(x,\cdot)$ is supported in $D$ and $L^2(D)\subset L^1(D)$, it is sufficient to show that $G_\omega(x,\cdot)\in L^2(D)$, which has already been demonstrated in Lemma \ref{gnorm}.

For $H\in(0,\frac12)$, we cannot directly apply the conclusion from \cite[Section 2.2, Case 2]{dasgupta1997fractional} because $\tilde G_\omega(x,\cdot)\notin\mathscr H^1(\mathbb R)$. In the following, we demonstrate that \eqref{fracequlformula} still holds even under a weaker regularity condition for $\tilde G_\omega(x,\cdot)$.

First, we assert that $\tilde{G}_\omega(x,\cdot)\in \mathscr{H}^{\frac12-H}(\mathbb{R})$ for any $H\in(0,\frac12)$. In fact, for any fixed $\epsilon>0$, it follows from Plancherel's theorem that
\begin{align*}
\left\|\tilde G_\omega(x,\cdot)\right\|_{\mathscr{H}^{\frac12-H}(\mathbb{R})} =&\int_\mathbb{R}\left(1+|\zeta|^2\right)^{\frac12-H}|\hat{\tilde{G}}_\omega(x,\zeta)|^2\mathrm{d}\zeta\\
=&\int_{(-\epsilon,\epsilon)}\left(1+|\zeta|^2\right)^{\frac12-H}|\hat{\tilde{G}}_\omega(x,\zeta)|^2\mathrm{d}\zeta + \int_{\mathbb{R}\setminus(-\epsilon,\epsilon)}\left(1+|\zeta|^2\right)^{\frac12-H}|\hat{\tilde{G}}_\omega(x,\zeta)|^2\mathrm{d}\zeta\\ \leqslant&\left(1+\epsilon^2\right)^{\frac12-H}\|\hat{\tilde G}_\omega(x,\cdot)\|^2_{L^2(\mathbb{R})}+\int_{\mathbb{R}\setminus(-\epsilon,\epsilon)}\left(\frac1{|\zeta|^2}+1\right)^{\frac12-H}|\zeta|^{1-2H}|\hat{\tilde{G}}_\omega(x,\zeta)|^2\mathrm{d}\zeta\\
\leqslant&~2\pi\left(1+\epsilon^2\right)^{\frac12-H}\|G_\omega(x,\cdot)\|_{L^2(D)}^2\\
& +\left(\frac1{\epsilon^2}+1\right)^{\frac12-H}\int_{\mathbb{R}\setminus(-\epsilon,\epsilon)}|\zeta|^{-2H-1}|\zeta\hat{\tilde{G}}_\omega(x,\zeta)|^2\mathrm{d}\zeta,
\end{align*}
where $\|G_\omega(x,\cdot)\|_{L^2(D)}<\infty$, as shown in Lemma \ref{gnorm}, and
\begin{align}\label{eq:gseminorm} &\int_{\mathbb{R}\setminus(-\epsilon,\epsilon)}\left|\zeta\right|^{-2H-1}|\zeta\hat{\tilde{G}}_\omega(x,\zeta)|^2\mathrm{d}\zeta\notag\\ =&\int_{\mathbb{R}\setminus(-\epsilon,\epsilon)}\left|\zeta\right|^{-2H-1}\left|\zeta\int_0^x G_\omega(x,y)\mathrm{e}^{-{\rm i}\zeta y}\mathrm{d}y+\zeta\int_x^1 G_\omega(x,y)\mathrm{e}^{-{\rm i}\zeta y}\mathrm{d}y\right|^2\mathrm{d}\zeta\notag\\ =&\int_{\mathbb{R}\setminus(-\epsilon,\epsilon)}\left|\zeta\right|^{-2H-1}\bigg|\left(G_\omega(x,y)\mathrm{e}^{-{\rm i}\zeta y}\right)\Big|_{y=0}^x-\int_0^x\partial_y G_\omega(x,y)\mathrm{e}^{-{\rm i}\zeta y}\mathrm{d}y\notag\\
&+\left(G_\omega(x,y)\mathrm{e}^{-{\rm i}\zeta y}\right)\Big|_{y=x}^1-\int_x^1\partial_y G_\omega(x,y)\mathrm{e}^{-{\rm i}\zeta y}\mathrm{d}y\bigg|^2\mathrm{d}\zeta\notag\\		=&\int_{\mathbb{R}\setminus(-\epsilon,\epsilon)}\left|\zeta\right|^{-2H-1}\left|G_\omega(x,0)+\int_0^x\mathrm{e}^{-{\rm i}\zeta y}\partial_y G_\omega(x,y)\mathrm{d}y+\int_x^1\mathrm{e}^{-{\rm i}\zeta y}\partial_y G_\omega(x,y)\mathrm{d}y\right|^2\mathrm{d}\zeta\notag\\ =&\int_{\mathbb{R}\setminus(-\epsilon,\epsilon)}\left|\zeta\right|^{-2H-1}\left|G_\omega(x,0)+\widehat{\mathscr{T}G}_\omega(x,\zeta)\right|^2\mathrm{d}\zeta\notag\\ \leqslant&~\frac2H\epsilon^{-2H}|G_\omega(x,0)|^2+2\epsilon^{-2H-1}\|\mathscr T G_\omega(x,\cdot)\|_{L^2(\mathbb R)}^2<\infty
\end{align}
based on Lemma \ref{gynorm}.

Note that $C_0^\infty(\mathbb R)$ is dense in $\mathscr H^{\frac12-H}(\mathbb R)$ (cf. \cite[Theorem 7.38]{adams1975sobolev}). Therefore, for the previously claimed $\tilde G_\omega(x,\cdot)\in\mathscr H^{\frac12-H}(\mathbb R)$, there exists a sequence $\{\phi_n:=\phi_n^{x,\omega}\}_{n\in\mathbb N}\subset C_0^\infty(\mathbb R)$ converging to $\tilde G_\omega(x,\cdot)$ in the norm $\|\cdot\|{\mathscr{H}^{\frac12-H}(\mathbb{R})}$. Moreover,
according to \cite[$(2.8)$]{dasgupta1997fractional}, \eqref{fracequlformula} holds for the sequence $\left\{\int_\mathbb R\phi_n(y)\mathrm{d} B^H(y)\right\}_{n\in\mathbb N}$. Hence, we obtain
\begin{equation*}
\begin{aligned}
\mathbb{E}\left|\int_\mathbb{R} \phi_n(y) \mathrm{d} B^H(y)-\int_\mathbb{R} \phi_m(y) \mathrm{d} B^H(y)\right|^2 =&~C_H^2\int_\mathbb{R}\frac{\left|\hat{\phi}_n(\zeta)-\hat{\phi}_m(\zeta)\right|^2}{|\zeta|^{2H-1}} \mathrm{d} \zeta \\
\leqslant&~C_H^2\int_\mathbb{R}\left|\hat{\phi}_n(\zeta)-\hat{\phi}_m(\zeta)\right|^2(1+|\zeta|^{1-2H}) \mathrm{d} \zeta\\
=&~C_H^2\|\phi_n-\phi_m\|_{\mathscr H^{\frac12-H}(\mathbb{R})}^2.
\end{aligned}
\end{equation*}
As a result, the sequence $\left\{\int_\mathbb{R}\phi_n(y)\mathrm{d}B^H(y)\right\}_{n\in\mathbb N}$ converges in the mean square sense, and we define the stochastic integral $\int_{\mathbb R}\tilde G_\omega(x,y)\mathrm{d}B^H(y)$ as the mean square limit of $\int_\mathbb R\phi_n(y)\mathrm{d}B^H(y)$. Finally, we obtain
\begin{equation*}
\begin{aligned}
\mathbb{E}\left|\int_D G_\omega(x, y) \mathrm{d} B^H(y)\right|^2 &=\mathbb{E}\left|\int_\mathbb{R} \tilde{G}_\omega(x, y) \mathrm{d} B^H(y)\right|^2 =\lim_{n\rightarrow\infty}\mathbb{E}\left|\int_\mathbb{R} \phi_n(y) \mathrm{d} B^H(y)\right|^2\\ &=\lim_{n\rightarrow\infty}C_H^2\int_\mathbb{R}\frac{|\hat{\phi}_n(\zeta)|^2}{|\zeta|^{2H-1}}\mathrm{d}\zeta=C_H^2\int_\mathbb{R}\frac{|\hat{\tilde{G}}_\omega(x,\zeta)|^2}{|\zeta|^{2H-1}}\mathrm{d}\zeta,
\end{aligned}
\end{equation*}
where the last equality follows from
\begin{equation*}
\begin{aligned} &\left|\int_\mathbb{R}\frac{|\hat{\phi}_n(\zeta)|^2}{|\zeta|^{2H-1}}\mathrm{d}\zeta -\int_\mathbb{R}\frac{|\hat{\tilde{G}}_\omega(x,\zeta)|^2}{|\zeta|^{2H-1}}\mathrm{d}\zeta\right|\\
\leqslant&\int_{\mathbb R}|\hat\phi_n(\zeta)-\hat{\tilde G}_\omega(x,\zeta)|\left(|\hat\phi_n(\zeta)|+|\hat{\tilde G}_\omega(x,\zeta)|\right)|\zeta|^{1-2H}d\zeta\\
\leqslant&~\|\phi_n-\tilde G_\omega(x,\cdot)\|_{\mathscr H^{\frac12-H}(\mathbb R)}\left(\|\phi_n\|_{\mathscr H^{\frac12-H}(\mathbb R)}+\|\tilde G_\omega(x,\cdot)\|_{\mathscr H^{\frac12-H}(\mathbb R)}\right)\to0
\end{aligned}
\end{equation*}
as $n\to\infty$ due to the convergence of $\{\phi_n\}_{n\in\mathbb N}$ to $\tilde G_\omega(x,\cdot)$ in $\mathscr H^{\frac12-H}(\mathbb R)$.
\end{proof}

\begin{corollary}\label{Equidlemma}
For a given $H\in (0,1)$, the following inequality holds:
\[
\sup_{\omega\in\mathbb R}\int_D\mathbb{E}\left|\int_D G_\omega(x, y) \mathrm{d} B^H(y)\right|^2{\rm d}x \leqslant C,
\]
where $C$ is a positive constant depending only on $H$.
\end{corollary}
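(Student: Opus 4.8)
The plan is to combine the It\^o-isometry identity of Lemma~\ref{isometry} with the uniform Green-function bounds of Lemmas~\ref{gnorm} and~\ref{gynorm}. Integrating \eqref{fracequlformula} in $x$ over $D$ and applying Tonelli's theorem (the integrand is nonnegative) gives
\[
\int_D\mathbb{E}\left|\int_D G_\omega(x, y)\,\mathrm{d} B^H(y)\right|^2\mathrm{d}x
= C_H^2\int_\mathbb{R}|\zeta|^{1-2H}\,\|\hat{\tilde G}_\omega(\cdot,\zeta)\|_{L^2(D)}^2\,\mathrm{d}\zeta ,
\]
so it suffices to bound the right-hand side uniformly in $\omega\in\mathbb R$. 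I would split the $\zeta$-integral into the low-frequency part $\{|\zeta|<1\}$ and the high-frequency part $\{|\zeta|\geqslant1\}$.

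On $\{|\zeta|<1\}$: since $\tilde G_\omega(x,\cdot)$ is supported in $D$ (of unit measure), $|\hat{\tilde G}_\omega(x,\zeta)|\leqslant\|G_\omega(x,\cdot)\|_{L^1(D)}\leqslant\|G_\omega(x,\cdot)\|_{L^2(D)}$, so $\|\hat{\tilde G}_\omega(\cdot,\zeta)\|_{L^2(D)}^2\leqslant\|G_\omega\|_{L^2(D\times D)}^2\leqslant C$ by Lemma~\ref{gnorm}; and since $1-2H>-1$ for all $H\in(0,1)$, the weight $|\zeta|^{1-2H}$ is integrable near the origin, so this part contributes at most a constant depending only on $H$. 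On $\{|\zeta|\geqslant1\}$ when $H\in[\tfrac12,1)$, one has $|\zeta|^{1-2H}\leqslant1$, so by Plancherel's theorem and Lemma~\ref{gnorm} this part is at most $2\pi\|G_\omega\|_{L^2(D\times D)}^2\leqslant C$.

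The remaining and genuinely delicate region is $\{|\zeta|\geqslant1\}$ when $H\in(0,\tfrac12)$, where $|\zeta|^{1-2H}$ grows; there I would use $|\zeta|^{1-2H}\leqslant(1+|\zeta|^2)^{\frac12-H}$ to bound the contribution by $\int_D\|\tilde G_\omega(x,\cdot)\|_{\mathscr H^{\frac12-H}(\mathbb R)}^2\,\mathrm{d}x$, and then reuse the estimate already obtained inside the proof of Lemma~\ref{isometry}: taking $\epsilon=1$ in \eqref{eq:gseminorm} and the display preceding it yields, with a constant depending only on $H$,
\[
\|\tilde G_\omega(x,\cdot)\|_{\mathscr H^{\frac12-H}(\mathbb R)}^2
\leqslant C\left(\|G_\omega(x,\cdot)\|_{L^2(D)}^2+|G_\omega(x,0)|^2+\|\mathscr T G_\omega(x,\cdot)\|_{L^2(\mathbb R)}^2\right).
\]
Integrating in $x$, the first and third terms become $\|G_\omega\|_{L^2(D\times D)}^2$ and $\|\mathscr T G_\omega\|_{L^2(\mathbb R\times\mathbb R)}^2$, both uniformly bounded by Lemmas~\ref{gnorm} and~\ref{gynorm}. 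The main obstacle is then the trace term $\int_D|G_\omega(x,0)|^2\,\mathrm{d}x$: rather than bounding $G_\omega(x,0)$ pointwise from \eqref{eq:g} (which would call for an awkward case analysis in $\Re[\sqrt s]$), I would apply the elementary one-dimensional trace inequality $|v(0)|^2\leqslant\|v\|_{L^2(D)}^2+2\|v\|_{L^2(D)}\|v'\|_{L^2(D)}$, valid for $v\in H^1(D)$, to $v=G_\omega(x,\cdot)$ (whose weak derivative on $D$ is precisely $\mathscr T G_\omega(x,\cdot)$), and then the Cauchy--Schwarz inequality in $x$, giving $\int_D|G_\omega(x,0)|^2\,\mathrm{d}x\leqslant\|G_\omega\|_{L^2(D\times D)}^2+2\|G_\omega\|_{L^2(D\times D)}\|\mathscr T G_\omega\|_{L^2(\mathbb R\times\mathbb R)}$, once more uniformly bounded by Lemmas~\ref{gnorm} and~\ref{gynorm}. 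Summing the three regions yields the claim with a constant depending only on $H$.
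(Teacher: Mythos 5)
Your argument is correct and follows the paper's route: the paper likewise integrates the isometry identity \eqref{fracequlformula} over $x$, splits the $\zeta$-integral at $|\zeta|=1$, handles the low frequencies and the case $H\in[\tfrac12,1)$ exactly as you do, and for $H\in(0,\tfrac12)$ invokes \eqref{eq:gseminorm} with $\epsilon=1$ together with Lemmas \ref{gnorm} and \ref{gynorm}. The only genuine difference is the term $\int_D|G_\omega(x,0)|^2\,\mathrm{d}x$: the paper disposes of it by appealing to the explicit formula for $G_\omega(\cdot,0)$ (in effect the symmetry $G_\omega(x,0)=G_\omega(0,x)$ and the bound of Lemma \ref{gnorm}), whereas your one-dimensional trace inequality combined with Cauchy--Schwarz reaches the same uniform bound without touching the closed-form expression \eqref{eq:g} --- a slightly more robust, formula-free treatment of that single term. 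Your unified treatment of the low-frequency region for all $H\in(0,1)$ via $1-2H>-1$ is also a minor streamlining of the paper's case-by-case computation.
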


\begin{proof}
The result for $H=\frac12$ follows from Lemmas \ref{gnorm} and \ref{isometry}.
			
For $H\in(\frac12,1)$, utilizing Lemma \ref{isometry}, Plancherel's theorem, and the observation
\[
|\hat{\tilde G}_\omega(x,\zeta)|=\left|\int_{\mathbb R}\tilde G_\omega(x,y)\mathrm{e}^{-{\rm i}y\zeta}~\mathrm dy\right|\leqslant\int_D|G_\omega(x,y)|~\mathrm dy \leqslant\|G_\omega(x,\cdot)\|_{L^2(D)}\quad\forall\,\zeta\in\mathbb R,
\]
we have
\begin{equation}\label{eq:g1}
\begin{aligned}
\mathbb{E}\left|\int_D G_\omega(x, y) \mathrm{d} B^H(y)\right|^2 &=C_H^2\int_{(-1,1)}\frac{|\hat{\tilde{G}}_\omega(x,\zeta)|^2}{|\zeta|^{2H-1}}\mathrm{d}\zeta+C_H^2\int_{\mathbb{R}\setminus (-1,1)}\frac{|\hat{\tilde{G}}_\omega(x,\zeta)|^2}{|\zeta|^{2H-1}}\mathrm{d}\zeta\\
&\leqslant C_H^2\|G_\omega(x,\cdot)\|_{L^2(D)}^2\int_{(-1,1)}\frac1{|\zeta|^{2H-1}}~\mathrm d\zeta+C_H^2\|\hat{\tilde G}_\omega(x,\cdot)\|_{L^2(\mathbb R)}^2\\					&=\frac{C_H^2}{1-H}\|G_\omega(x,\cdot)\|_{L^2(D)}^2+2\pi C_H^2\|G_\omega(x,\cdot)\|_{L^2(D)}^2,
\end{aligned}
\end{equation}
which, together with Lemma \ref{gnorm}, implies that
\[
\sup_{\omega\in\mathbb R}\int_D\mathbb{E}\left|\int_D G_\omega(x, y) \mathrm{d} B^H(y)\right|^2{\rm d}x\leqslant C\sup_{\omega\in\mathbb R}\|G_\omega(x,y)\|_{L^2(D\times D)}^2\leqslant C.
\]

For $H\in(0,\frac12)$, it follows from Lemma \ref{isometry} and \eqref{eq:gseminorm} that
\begin{equation}\label{eq:g2}
\begin{aligned}
\mathbb{E}\left|\int_D G_\omega(x, y) \mathrm{d} B^H(y)\right|^2 &=C_H^2\int_{(-1,1)}|\zeta|^{1-2H}|\hat{\tilde{G}}_\omega(x,\zeta)|^2\mathrm{d}\zeta+C_H^2\int_{\mathbb{R}\setminus (-1,1)}|\zeta|^{-2H-1}|\zeta\hat{\tilde{G}}_\omega(x,\zeta)|^2\mathrm{d}\zeta\\
&\leqslant C_H^2\|G_\omega(x,\cdot)\|_{L^2(D)}^2+\frac2H|G_\omega(x,0)|^2+2\|\mathscr T G_\omega(x,\cdot)\|_{L^2(\mathbb R)}^2.
\end{aligned}
\end{equation}
Hence, Lemmas \ref{gnorm} and \ref{gynorm}, in conjunction with the definition of $G_\omega(\cdot,0)$, lead to
\begin{equation*}
\begin{aligned}
\sup_{\omega\in\mathbb R}\int_D\mathbb{E}\left|\int_D G_\omega(x, y) \mathrm{d} B^H(y)\right|^2{\rm d}x
\leqslant& ~C\sup_{\omega\in\mathbb R}\left(\|G_\omega\|_{L^2(D\times D)}^2+\|G_\omega(\cdot,0)\|_{L^2(D)}^2+\|\mathscr T G_\omega\|_{L^2(D\times D)}^2\right)\\
\leqslant&~ C,
\end{aligned}
\end{equation*}
which concludes the proof.
\end{proof}

\subsection{The well-posedness}
	
Utilizing the Green function $G_\omega(x,y)$, the boundary value problem \eqref{freqmaineq} has a unique mild solution in the form
\begin{equation}\label{freqmildsolution}
\hat u(x, \omega)=-\hat{f}(\omega) \int_D G_\omega(x, y) \mathrm{d} B^{H}(y),\quad\omega\in\mathbb{R},
\end{equation}
which satisfies the following regularity estimate.
	
\begin{lemma}\label{regularity}
Let $p\geqslant0$ and $f(t) \in \mathscr{H}^p\left(\mathbb{R}_{+}\right)$. The solution \eqref{freqmildsolution} of the stochastic differential equation \eqref{freqmaineq} satisfies
\begin{equation*}
\mathbb{E}\left[\int_\mathbb R\|({\rm i}\omega)^p \hat{u}(\cdot,\omega)\|_{L^2(D)}^2{\rm d}\omega\right]
\leqslant C\|f\|_{\mathscr{H}^p\left(\mathbb{R}_{+}\right)}^2,
\end{equation*}
where $C>0$ is a constant depending only on $H$.
\end{lemma}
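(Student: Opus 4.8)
The plan is to reduce the estimate directly to Corollary \ref{Equidlemma}, exploiting the fact that in the mild solution \eqref{freqmildsolution} the frequency-dependent factor $\hat f(\omega)$ is deterministic and splits off cleanly from the stochastic integral. First I would substitute \eqref{freqmildsolution} into the left-hand side. Since $\hat u(x,\omega)=-\hat f(\omega)\int_D G_\omega(x,y)\,\mathrm dB^H(y)$ and $|(\mathrm i\omega)^p|^2=|\omega|^{2p}$, for each fixed $\omega\in\mathbb R$ we have
\[
\|(\mathrm i\omega)^p\hat u(\cdot,\omega)\|_{L^2(D)}^2
=|\omega|^{2p}\,|\hat f(\omega)|^2\int_D\left|\int_D G_\omega(x,y)\,\mathrm dB^H(y)\right|^2\mathrm dx .
\]
All terms are nonnegative, so Tonelli's theorem lets me interchange the expectation $\mathbb E$, the spatial integral $\int_D\mathrm dx$, and the frequency integral $\int_{\mathbb R}\mathrm d\omega$; the joint measurability of $(x,\omega)\mapsto\int_D G_\omega(x,y)\,\mathrm dB^H(y)$ required for this is inherited from the mean-square-limit construction carried out in the proof of Lemma \ref{isometry}. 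This yields
\[
\mathbb E\left[\int_{\mathbb R}\|(\mathrm i\omega)^p\hat u(\cdot,\omega)\|_{L^2(D)}^2\,\mathrm d\omega\right]
=\int_{\mathbb R}|\omega|^{2p}|\hat f(\omega)|^2\left(\int_D\mathbb E\left|\int_D G_\omega(x,y)\,\mathrm dB^H(y)\right|^2\mathrm dx\right)\mathrm d\omega .
\]

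Next I would invoke Corollary \ref{Equidlemma}, which bounds the inner parenthesized quantity by a constant $C=C(H)$ independent of $\omega$, so that the right-hand side is at most $C\int_{\mathbb R}|\omega|^{2p}|\hat f(\omega)|^2\,\mathrm d\omega$. Finally, using the elementary inequality $|\omega|^{2p}\leqslant(1+|\omega|^2)^p$ and recalling that $f$ is extended by zero outside $\mathbb R_+$, this last integral is controlled by $\|f\|_{\mathscr H^p(\mathbb R_+)}^2$ by the definition of the fractional Sobolev norm, which gives the claimed bound.

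There is essentially no hard step in this lemma: the only points needing a word of care are the Tonelli interchange (harmless by nonnegativity, once the joint measurability from Lemma \ref{isometry} is noted) and the identification of $\int_{\mathbb R}|\omega|^{2p}|\hat f(\omega)|^2\,\mathrm d\omega$ with a quantity dominated by the $\mathscr H^p(\mathbb R_+)$-norm of $f$. The genuine analytic effort — the uniform-in-$\omega$ second-moment bound for the Green-function stochastic integral across all three regimes $H=\frac12$, $H\in(\frac12,1)$, and $H\in(0,\frac12)$ — has already been absorbed into Corollary \ref{Equidlemma} via Lemma \ref{isometry}, so the present statement is just the assembly step.
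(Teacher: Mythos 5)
Your proposal is correct and follows essentially the same route as the paper: substitute the mild solution \eqref{freqmildsolution}, interchange the expectation with the integrals, apply the uniform-in-$\omega$ bound from Corollary \ref{Equidlemma}, and conclude with $|\omega|^{2p}\leqslant(1+|\omega|^2)^p$ and the definition of the $\mathscr H^p(\mathbb R_+)$-norm. The only difference is that you make the Tonelli/measurability step explicit, which the paper leaves implicit.
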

	
\begin{proof}
By Corollary \ref{Equidlemma}, we have
\begin{align*}
\mathbb{E}\left[\int_\mathbb R\|({\rm i}\omega)^p \hat u(\cdot,\omega)\|_{L^2(D)}^2{\rm d}\omega\right]
&=\int_{\mathbb{R}}|(\mathrm{i} \omega)^p \hat{f}(\omega)|^2 \int_D \mathbb{E}\left|\int_D G_\omega(x, y) \mathrm{d} B^H(y)\right|^2 \mathrm{d} x \mathrm{d} \omega\\
&\leqslant C \int_{\mathbb{R}}|(\mathrm{i} \omega)^p \hat{f}(\omega)|^2 {\rm d} \omega \\
&\leqslant C\int_{\mathbb R}(1+|\omega|^2)^p|\hat f(\omega)|^2{\rm d}\omega=C\|f\|^2_{\mathscr{H}^p\left(\mathbb{R}_{+}\right)},
\end{align*}
which completes the proof.
\end{proof}
	
Now, we are in a position to show the well-posedness of the original problem \eqref{mainfunction} based on the equivalent problem \eqref{freqmaineq} obtained through the Fourier transform.
	
\begin{theorem}
Assuming that $f\in \mathscr{H}^2(\mathbb{R}_+)$, the direct source problem \eqref{mainfunction} has a unique solution $u\in L^2(\Omega;\mathscr H^2(\mathbb R_+;L^2(D)))$, which satisfies
\begin{equation}\label{eq:u}
\mathbb{E}\left\|u\right\|_{\mathscr H^2(\mathbb R_+;L^2(D))}^2
\leqslant C\|f\|_{\mathscr{H}^2\left(\mathbb{R}_{+}\right)}^2,
\end{equation}
where $C > 0$ is a constant depending only on $H$.
\end{theorem}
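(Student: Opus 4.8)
The plan is to obtain the time-domain solution $u$ as the inverse Fourier transform in $t$ of the frequency-domain mild solution $\hat u$ from \eqref{freqmildsolution}, and then to transfer all estimates back to the time domain via Plancherel's theorem. First I would apply Lemma \ref{regularity} with $p=0$ and $p=2$, which gives, since $\mathscr H^2(\mathbb R_+)\hookrightarrow\mathscr H^p(\mathbb R_+)$ for $0\leqslant p\leqslant 2$ (the weights satisfy $(1+|\omega|^2)^p\leqslant(1+|\omega|^2)^2$ for functions supported on $\mathbb R_+$),
\[
\mathbb E\left[\int_{\mathbb R}\|(\mathrm i\omega)^p\hat u(\cdot,\omega)\|_{L^2(D)}^2\,\mathrm d\omega\right]\leqslant C\|f\|_{\mathscr H^2(\mathbb R_+)}^2,\qquad p\in\{0,2\}.
\]
Because $(1+|\omega|^2)^2\leqslant C(1+|\omega|^4)=C\bigl(1+|(\mathrm i\omega)^2|^2\bigr)$, summing the $p=0$ and $p=2$ bounds yields
\[
\mathbb E\left[\int_{\mathbb R}(1+|\omega|^2)^2\,\|\hat u(\cdot,\omega)\|_{L^2(D)}^2\,\mathrm d\omega\right]\leqslant C\|f\|_{\mathscr H^2(\mathbb R_+)}^2.
\]

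Next, by the definition of the space $\mathscr H^2(\mathbb R_+;L^2(D))$ (via zero extension of $u(x,\cdot)$ to $\mathbb R$, which is legitimate since the data in \eqref{mainfunction} are homogeneous), together with Plancherel's theorem in the time variable for a.e.\ fixed $x$, the left-hand side of the last display equals, up to a constant, $\mathbb E\|u\|_{\mathscr H^2(\mathbb R_+;L^2(D))}^2$ after interchanging $\mathbb E$, $\int_D\mathrm dx$ and $\int_{\mathbb R}\mathrm d\omega$ by the (stochastic) Fubini theorem; this establishes \eqref{eq:u} and in particular $u\in L^2(\Omega;\mathscr H^2(\mathbb R_+;L^2(D)))$. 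The interchange is justified because the integrand is nonnegative and $(x,\omega)\mapsto\int_D G_\omega(x,y)\,\mathrm dB^H(y)$ is jointly measurable, which follows from the mean-square continuity of this stochastic integral in its parameters (Lemma \ref{isometry} and the continuity of $\omega\mapsto\hat{\tilde G}_\omega(x,\cdot)$ in the weighted $L^2$ norm appearing there).

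It then remains to verify that $u$ genuinely solves \eqref{mainfunction}. The boundary conditions $\partial_x u(0,t)=0$ and $u(1,t)=0$ are inherited from \eqref{freqmaineq} by inverting the Fourier transform. Since $u(x,\cdot)$ extended by zero lies in $\mathscr H^2(\mathbb R)\hookrightarrow C^1(\mathbb R)$ for a.e.\ $x$, its trace at $t=0$ forces $u(x,0)=0$ and, when $\alpha_n\in(1,2]$, $\partial_t u(x,0)=0$, which is exactly the compatibility condition in Lemma \ref{lm:fracd}. Applying that lemma (to a causal truncation/mollification of $u(x,\cdot)$ and passing to the limit using the uniform bound of Lemma \ref{regularity}), one gets $\mathscr F[\partial_t^{\alpha_k}u(x,\cdot)](\omega)=(\mathrm i\omega)^{\alpha_k}\hat u(x,\omega)$, so that the inverse Fourier transform of the first line of \eqref{freqmaineq} recovers $\sum_{k=1}^n\partial_t^{\alpha_k}u-\partial_{xx}u=f(t)\dot B^H(x)$ in the appropriate $L^2(\Omega)$ sense. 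Uniqueness follows since any two solutions have, for a.e.\ $\omega$, Fourier transforms solving the same problem \eqref{freqmaineq}, whose mild solution \eqref{freqmildsolution} is unique, so their difference vanishes and by Plancherel the solutions coincide in $L^2(\Omega;\mathscr H^2(\mathbb R_+;L^2(D)))$.

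The main obstacle I anticipate is making the last paragraph rigorous: the formal Fourier manipulation must be reversible at the available level of regularity, and the hypotheses of Lemma \ref{lm:fracd} (compact support, smoothness) are not literally met by $u$, so a density/truncation argument is needed, carried out in the mean-square topology to accommodate the stochastic integral. The Plancherel and Fubini bookkeeping in the first two paragraphs is routine once the joint measurability is in place.
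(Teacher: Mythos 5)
Your proposal is correct and follows essentially the same route as the paper: solve in the frequency domain via \eqref{freqmildsolution}, invoke Lemma \ref{regularity} to bound the weighted frequency-domain norms, transfer back by Plancherel, and restrict the inverse Fourier transform to $t\in\mathbb R_+$ (the paper writes this inverse transform explicitly as a causal convolution with $f$, which is how it reads off the zero initial data that you instead obtain from the $\mathscr H^2\hookrightarrow C^1$ embedding). The additional care you flag in your last paragraph --- joint measurability, Fubini, and the density/truncation argument needed to apply Lemma \ref{lm:fracd} --- addresses steps the paper compresses into ``it can be readily verified,'' so nothing in your outline conflicts with the paper's argument.
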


\begin{proof}
The proof draws inspiration from \cite{gong2021numerical}. Let $\{f(t)\}_{t\in\mathbb R}$ be the zero extension of $\{f(t)\}_{t\in\mathbb R_+}$, as initially explained in Section \ref{s3.1}. For any $x\in D$ and $t\in\mathbb{R}$, with $\hat u(x,\omega)$ considered as the mild solution of \eqref{freqmaineq}, we define the inverse Fourier transform of $\hat u(x,\omega)$ as follows: 
\[
\breve{u}(x,t):=-\int_{-\infty}^t f(\tau) \mathscr{F}^{-1}\left[\int_D G_\omega(x, y) \mathrm{d} B^H(y)\right](t-\tau) \mathrm{d} \tau. 
\]
By Plancherel's theorem and Lemma \ref{regularity}, we obtain $\partial_t\breve u,\partial_t^2\breve u\in L^2(\Omega;L^2(D\times \mathbb R))$ and
\begin{equation*}
\begin{aligned}
\mathbb E\|\breve u\|_{\mathscr H^2(\mathbb R;L^2(D))}^2=&~\mathbb E\left[\int_{\mathbb R}(1+|\omega|^2)^2\|\hat u(\cdot,\omega)\|_{L^2(D)}^2{\rm d}\omega\right]\\
\leqslant&~\mathbb E\left[\int_{\mathbb R}\|\hat u(\cdot,\omega)\|_{L^2(D)}^2{\rm d}\omega\right]+2\mathbb E\left[\int_{\mathbb R}\|({\rm i}\omega)\hat u(\cdot,\omega)\|_{L^2(D)}^2{\rm d}\omega\right]\\
&+\mathbb E\left[\int_{\mathbb R}\|({\rm i}\omega)^2 \hat u(\cdot,\omega)\|_{L^2(D)}^2{\rm d}\omega\right]\\
\leqslant&~C\|f\|_{\mathscr H^2(\mathbb R_+)}^2,
\end{aligned}
\end{equation*}
which also implies that the Caputo fractional derivative of $\breve{u}$ with respect to the time $t$ is properly defined.

Let $u(x, t)$ be the restriction of $\breve{u}(x, t)$ to $t$ belonging to the set of non-negative real numbers, i.e., 
\[
u(x, t):=\breve{u}(x, t)|_{t\in\mathbb R_+}. 
\]
It can be readily verified that, in a mean square sense, the function $u$ defined as described above is the unique mild solution of \eqref{mainfunction}. It is clear to note that
\[
u(x,0)=\breve{u}(x,0)=0,\quad \partial_t u(x,0)=\partial_t\breve{u}(x,0)=0.
\]
In addition, it also satisfies \eqref{eq:u}.
\end{proof}

\section{The inverse problem}\label{sec:4}

In this section, our primary focus is on addressing the uniqueness and instability in the reconstruction of the phaseless Fourier mode $|\hat f(\omega)|$ of the source function $f$ from the measured data $\{u(0,t)\}_{t\geq 0}$. To subsequently recover $|f(t)|$ from $|\hat f(\omega)|$, commonly referred to as the phase retrieval problem, we  introduce and employ the PhaseLift technique.

Evaluating \eqref{freqmildsolution} at $x=0$ and then taking the expected value and variance on both sides, we deduce 
\begin{equation}\label{inverseformula}
\mathbb{E}[\hat u(0, \omega)]=0, \quad \mathbb{V}[\hat u(0, \omega)]=R(\omega)|\hat{f}(\omega)|^2 ,
\end{equation}
where $R(\omega)$ is a critical constant depending on $\omega$ and is given by
\[
 R(\omega)=\mathbb{E}\left|\int_D G_\omega(0,y) \mathrm{d} B^H(y)\right|^2.
\]
Here, for any $y\in D$, we have
\begin{equation*}
G_\omega(0,y) =\left\{
\begin{aligned}
&y-1,\quad  &\omega=0,\\			&\frac{\mathrm{e}^{\sqrt{s}y}-\mathrm{e}^{\sqrt{s}(2-y)}}{\sqrt{s}\left(1+\mathrm{e}^{2 \sqrt{s}}\right)}, \quad& \omega\neq0.
\end{aligned}
\right.
\end{equation*}

\subsection{Uniqueness}
\label{sec:4.1}

It is clear to note from \eqref{inverseformula} that $|\hat f(\omega)|$ can be uniquely determined by
\begin{equation}\label{Fhat}
|\hat{f}(\omega)|=\left(\frac{\mathbb{V}[\hat u(0,\omega)]}{R(\omega)}\right)^\frac{1}{2},\quad\omega\in\mathbb{R}
\end{equation}
if $R(\omega)>0$. In fact, as stated in the following lemma, the uniqueness mentioned here is established.

\begin{lemma}\label{lm:unique}
For $H\in(0,1)$, it holds for any $\omega\in\mathbb R$ that $R(\omega)>0$.
\end{lemma}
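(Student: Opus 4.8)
The plan is to combine the It\^o isometry identity of Lemma~\ref{isometry} with the explicit formula for $G_\omega(0,\cdot)$ and argue by contradiction. Applying Lemma~\ref{isometry} at $x=0$ gives
\[
R(\omega)=\mathbb{E}\left|\int_D G_\omega(0,y)\,\mathrm{d}B^H(y)\right|^2=C_H^2\int_{\mathbb{R}}\frac{|\hat{\tilde{G}}_\omega(0,\zeta)|^2}{|\zeta|^{2H-1}}\,\mathrm{d}\zeta,
\]
and since $C_H>0$ and the integrand is nonnegative, we have $R(\omega)\geqslant0$; it remains to exclude equality. Suppose $R(\omega)=0$. As the weight $|\zeta|^{1-2H}$ is strictly positive for every $\zeta\neq0$, the vanishing of the integral forces $\hat{\tilde{G}}_\omega(0,\zeta)=0$ for almost every $\zeta\in\mathbb{R}$, whence $\tilde{G}_\omega(0,\cdot)=0$ in $L^2(\mathbb{R})$ by Plancherel's theorem, i.e.\ $G_\omega(0,y)=0$ for almost every $y\in D$. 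Since $G_\omega(0,\cdot)$ is continuous on $D$, it would then have to vanish identically on $D$.

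The next step is to show this is impossible using the explicit expression. For $\omega=0$ it is immediate, since $G_0(0,y)=y-1\neq0$ on $(0,1)$. For $\omega\neq0$, recall that the principal branch is chosen so that $\Re[\sqrt{s}]>0$; in particular $\sqrt{s}\neq0$, and $1+\mathrm{e}^{2\sqrt{s}}\neq0$ as well, because $1+\mathrm{e}^{2\sqrt{s}}=0$ would force $\sqrt{s}$ to be purely imaginary. Thus the identity $G_\omega(0,y)\equiv0$ on $D$ is equivalent to $\mathrm{e}^{\sqrt{s}y}=\mathrm{e}^{\sqrt{s}(2-y)}$, i.e.\ $\mathrm{e}^{2\sqrt{s}(y-1)}=1$, for all $y\in(0,1)$; differentiating in $y$ yields $2\sqrt{s}\,\mathrm{e}^{2\sqrt{s}(y-1)}=0$, which is absurd since $\sqrt{s}\neq0$. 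This contradiction establishes $R(\omega)>0$ for every $\omega\in\mathbb{R}$.

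The argument is elementary once Lemma~\ref{isometry} is in hand; the only points that require a moment's attention are the strict positivity of the weight $|\zeta|^{1-2H}$ on $\mathbb{R}\setminus\{0\}$ (so that a null integral genuinely forces $\hat{\tilde{G}}_\omega(0,\cdot)$ to vanish almost everywhere, despite the singularity or degeneracy at the origin) and the non-vanishing of $1+\mathrm{e}^{2\sqrt{s}}$, both of which are guaranteed by the normalization $\Re[\sqrt{s}]>0$. I do not anticipate a genuine obstacle here; alternatively, one may avoid the differentiation trick by invoking that a non-constant real-analytic function of $y$ cannot vanish on a set of positive Lebesgue measure, and I would present whichever phrasing is shorter.
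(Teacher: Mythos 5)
Your proposal is correct, and after the common starting point (the isometry of Lemma~\ref{isometry} evaluated at $x=0$) it takes a genuinely different route to the key non-vanishing step. The paper bounds $R(\omega)$ below by the integral over $\zeta\in(0,1)$, computes $\hat{\tilde G}_\omega(0,\zeta)$ in closed form, and shows it is not identically zero on $[0,1]$ by evaluating at $\zeta=0$ (when $\sqrt s\neq 2n\pi\mathrm{i}$) or at $\zeta=1$ (when $\sqrt s=2n\pi\mathrm{i}$), concluding by continuity of $\hat{\tilde G}_\omega(0,\cdot)$. You instead argue by contradiction entirely in physical space: since the weight $|\zeta|^{1-2H}$ is positive off the origin, $R(\omega)=0$ forces $\hat{\tilde G}_\omega(0,\cdot)=0$ a.e., hence $G_\omega(0,\cdot)=0$ a.e.\ by Plancherel (legitimate, as $G_\omega(0,\cdot)\in L^2(D)$ by Lemma~\ref{gnorm}), hence identically by continuity, which the explicit formula rules out — $G_0(0,y)=y-1$ for $\omega=0$, and for $\omega\neq 0$ the identity $\mathrm{e}^{2\sqrt s(y-1)}\equiv 1$ on $(0,1)$ is impossible since $\sqrt s\neq 0$. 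Your version is shorter and avoids both the explicit Fourier-transform computation and the case split on $\sqrt s$ (which is in fact vacuous here, since $\alpha_1<2$ guarantees $\Re[\sqrt s]>0$ for $\omega\neq 0$, so $\sqrt s$ is never purely imaginary); the paper's version has the minor advantage of exhibiting a concrete nonnegative integrand whose positivity on an explicit subinterval delivers the bound, which is closer in spirit to a quantitative lower estimate. Your observation that $1+\mathrm{e}^{2\sqrt s}\neq 0$ because $\Re[\sqrt s]>0$ is also the correct justification, and the differentiation trick (or the real-analyticity argument you mention as an alternative) closes the argument cleanly.
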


\begin{proof}
We have from Lemma \ref{isometry} that
\begin{equation*}
R(\omega)=C_H^2\int_\mathbb{R}\frac{|\hat{\tilde{G}}_\omega(0,\zeta)|^2}{|\zeta|^{2H-1}}\mathrm{d}\zeta
\geqslant C_H^2\int_0^1\frac{|\hat{\tilde{G}}_\omega(0,\zeta)|^2}{\zeta^{2H-1}}\mathrm{d}\zeta,
\end{equation*}
where $\hat{\tilde G}_\omega(0,\zeta)$ with $\zeta\in(0,1)$ can be calculated as follows.
		
If $\omega=0$, it holds
\begin{equation*}
\begin{aligned}
\left|\hat{\tilde{G}}_0(0,\zeta)\right|^2
&=\left|\int_{D}G_0(0,y)\mathrm{e}^{-\mathrm{i}\zeta y}\mathrm{d}y\right|^2				=\left|\int_0^1(y-1)\mathrm{e}^{-\mathrm{i}\zeta y}\mathrm{d}y\right|^2\\				&=\left|\frac{\mathrm{e}^{-\mathrm{i}\zeta}-1+{\rm i}\zeta}{\zeta^2}\right|^2				=\frac{(\cos(\zeta)-1)^2+(\sin(\zeta)-\zeta)^2}{\zeta^4},
\end{aligned}
\end{equation*}
which, together with $\frac{(\cos(\zeta)-1)^2+(\sin(\zeta)-\zeta)^2}{\zeta^{2H+3}}>0$ for any $\zeta\in(0,1)$, implies that
\begin{equation*}
\begin{aligned}
R(0)&\geqslant C_H^2\int_0^1\frac{(\cos(\zeta)-1)^2+(\sin(\zeta)-\zeta)^2}{\zeta^{2H+3}}\mathrm{d}\zeta>0.
\end{aligned}
\end{equation*}

If $\omega\neq0$, it follows from a straightforward calculation that 
\begin{equation*}
\begin{aligned}
\hat{\tilde{G}}_\omega(0,\zeta)
&=\int_{D}G_\omega(0,y)e^{-\mathrm{i}\zeta y}\mathrm{d}y\\
&=\frac{1}{\sqrt{s}\left(1+\mathrm{e}^{2\sqrt{s}}\right)}\left[\frac{\mathrm{e}^{\sqrt s-\mathrm{i}\zeta}-1}{\sqrt{s}-\mathrm{i}\zeta}+\frac{\mathrm{e}^{\sqrt s-\mathrm{i}\zeta}-\mathrm{e}^{2\sqrt{s}}}{\sqrt{s}+\mathrm{i}\zeta}\right]\\
&=\frac{2\sqrt{s}\mathrm e^{\sqrt s-{\rm i}\zeta}-\sqrt{s}(\mathrm e^{2\sqrt{s}}+1)+{\rm i}\zeta(\mathrm e^{2\sqrt s}-1)}{\sqrt s(1+\mathrm e^{2\sqrt{s}})(s+\zeta^2)}.
\end{aligned}
\end{equation*}
For $\omega\neq0$, i.e., $s\neq0$, we assert that $|\hat{\tilde G}_{\omega}(0,\zeta)|\not\equiv0$ for $\zeta\in[0,1]$. In fact, if $\sqrt s\neq2n\pi\mathrm{i}$ with $n\in \mathbb{Z}\setminus\{0\}$, then
\[ |\hat{\tilde{G}}_\omega(0,0)|=\left|\frac{2\mathrm{e}^{\sqrt s}-\mathrm{e}^{2\sqrt s}-1}{s\left(1+\mathrm{e}^{2 \sqrt{s}}\right)}\right|=\left|\frac{(\mathrm e^{\sqrt s}-1)^2}{s\left(1+\mathrm{e}^{2 \sqrt{s}}\right)}\right|>0.
\]
If $\sqrt{s}=2n\pi\mathrm{i}$ with $n\in\mathbb{Z}\setminus\{0\}$, then ${\rm e}^{\sqrt s}=1$ and
\[
|\hat{\tilde{G}}_\omega(0,1)|=\left|\frac{\rm e^{-\rm i}-1}{1-4n^2\pi^2}\right|>0,
\]
which finishes the assertion. As a result,
\begin{equation*}
R(\omega) \geqslant C_H^2\int_0^1\frac{|\hat{\tilde{G}}_\omega(0,\zeta)|^2}{\zeta^{2H-1}}\mathrm{d}\zeta>0
\end{equation*}
due to the continuity of $\hat{\tilde G}_\omega(0,\cdot)$ in $[0,1]$ for $\omega\neq0$.
\end{proof}

\subsection{Instability}
\label{sec:4.2}

While the uniqueness of the reconstruction for $|\hat f(\omega)|$ is confirmed by \eqref{Fhat} and Lemma \ref{lm:unique}, the recovery process is found to be unstable, as demonstrated in the following theorem. In this subsection, we always assume that $|\omega|>1$.
	
\begin{theorem}\label{instability}
For $H\in[\frac12,1)$, there exists a constant $C>0$ independent of $\omega$ such that
\[
R(\omega)\leqslant C|\omega|^{-\alpha_{\max}}£¬
\]
where $\alpha_{\max}$ is defined in Lemma \ref{lm:s}. For $H\in(0,\frac12)$, assuming additionally that $\alpha_n<2$, the following holds:
\[
\lim_{|\omega|\to\infty}R(\omega)=0.
\]
\end{theorem}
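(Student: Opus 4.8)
The plan is to bound $R(\omega)=\mathbb E\big|\int_D G_\omega(0,y)\,\mathrm dB^H(y)\big|^2$ by $L^2$-type norms of the Green function that were already controlled in Lemmas \ref{gnorm}--\ref{gynorm}, and then to track the decay of those norms in $|\omega|$ using the lower bound for $|s|$ from Lemma \ref{lm:s}. The two ranges of $H$ must be separated because the representation of $R(\omega)$ in Lemma \ref{isometry} carries the weight $|\zeta|^{1-2H}$, which is integrable at the origin for every $H$ but retains the full $L^2$-smallness of $\hat{\tilde G}_\omega(0,\cdot)$ only when $H\ge\frac12$; for $H<\frac12$ one integrates by parts and pays with boundary terms, which is exactly the origin of the two displayed estimates \eqref{eq:g1} and \eqref{eq:g2} in the proof of Corollary \ref{Equidlemma}. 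I would apply those estimates with $x=0$.

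For $H\in[\frac12,1)$: by It\^o's isometry when $H=\frac12$ and by \eqref{eq:g1} when $H\in(\frac12,1)$, one has $R(\omega)\le C\|G_\omega(0,\cdot)\|_{L^2(D)}^2$ with $C=C(H)$. The identity \eqref{eq:h} from the proof of Lemma \ref{gnorm} reads $\|G_\omega(0,\cdot)\|_{L^2(D)}^2=|s|^{-1}h(\Re[\sqrt s])$ with $h$ bounded uniformly on $[0,\infty)$, hence $R(\omega)\le C|s|^{-1}$, and Lemma \ref{lm:s} gives $|s|^{-1}\le[\sin(\pi\alpha_{\max}/2)]^{-1}|\omega|^{-\alpha_{\max}}$. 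This proves $R(\omega)\le C|\omega|^{-\alpha_{\max}}$; note that no condition on $\alpha_n$ enters, because only the uniform boundedness of $h$ --- not any decay of it --- is used.

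For $H\in(0,\frac12)$ with $\alpha_n<2$: taking $x=0$ in \eqref{eq:g2} yields $R(\omega)\le C\big(\|G_\omega(0,\cdot)\|_{L^2(D)}^2+|G_\omega(0,0)|^2+\|\mathscr T G_\omega(0,\cdot)\|_{L^2(\mathbb R)}^2\big)$. The first term is $\le C|s|^{-1}\to0$ as $|\omega|\to\infty$, again by \eqref{eq:h} and Lemma \ref{lm:s}. For the remaining two I need that $\Re[\sqrt s]\to\infty$ as $|\omega|\to\infty$; granting this, $G_\omega(0,0)=-\tanh(\sqrt s)/\sqrt s$ from \eqref{eq:g}, so $|G_\omega(0,0)|^2\le\coth^2(\Re[\sqrt s])\,|s|^{-1}\to0$, while the derivation of \eqref{eq:Tg} applies verbatim at $x=0$ and, combined with \eqref{eq:hbound}, gives $\|\mathscr T G_\omega(0,\cdot)\|_{L^2(\mathbb R)}^2\le h(\Re[\sqrt s])\le 2/\Re[\sqrt s]\to0$. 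Hence $R(\omega)\to0$. (The same reasoning in fact delivers the rate $R(\omega)\le C|\omega|^{-\alpha_{\max}/2}$, the loss of half a power relative to the case $H\ge\frac12$ coming from the $\mathscr T G_\omega$ term.)

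The one genuinely new ingredient, which I expect to be \emph{the main obstacle}, is the claim $\Re[\sqrt s]\to\infty$ when $\alpha_n<2$. By the symmetry $s(-\omega)=\overline{s(\omega)}$ it suffices to treat $\omega>1$. When $\alpha_n<2$ every summand of \eqref{sdefinition} lies in the open upper half-plane, so $\Im[s]\ge|\omega|^{\alpha_n}\sin(\pi\alpha_n/2)>0$, whereas $|\Re[s]|\le n|\omega|^{\alpha_n}$; consequently $\arg(s)\in[0,\pi-\delta_0]$ with the fixed angle $\delta_0:=\arctan\!\big(n^{-1}\sin(\pi\alpha_n/2)\big)\in(0,\tfrac\pi2)$ (treating $\Re[s]\ge0$ and $\Re[s]<0$ separately), so $\arg(\sqrt s)\le\tfrac\pi2-\tfrac{\delta_0}2$ and therefore $\Re[\sqrt s]=|s|^{1/2}\cos(\arg\sqrt s)\ge\sin(\tfrac{\delta_0}2)\,|s|^{1/2}\to\infty$ by Lemma \ref{lm:s}. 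The reason this breaks when $\alpha_n=2$ --- and why the extra hypothesis is needed only in this second regime --- is that then the dominant summand $-|\omega|^2$ pushes $\arg(s)\to\pm\pi$, so $\Re[\sqrt s]$ may stay bounded, $h(\Re[\sqrt s])$ need not tend to $0$, and the vanishing of $R(\omega)$ genuinely fails; for $H\ge\frac12$ this is invisible because $h$ enters only through its uniform bound.
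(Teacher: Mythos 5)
Your proposal is correct and follows essentially the same route as the paper: the case $H\in[\frac12,1)$ via \eqref{eq:g1} and Lemma \ref{gnorm}, the case $H\in(0,\frac12)$ via \eqref{eq:g2} with the three terms controlled by $|s|^{-1}$ and $h(\Re[\sqrt s])$, and Lemma \ref{lm:s} to convert $|s|^{-1}$ into $|\omega|^{-\alpha_{\max}}$. The only cosmetic difference is in showing $\Re[\sqrt s]\to\infty$ when $\alpha_n<2$: you derive a uniform angle bound $\arg(s)\le\pi-\delta_0$, while the paper computes $\lim\cos(\arg(s))=\cos(\pi\alpha_n/2)>-1$; these are equivalent, and your version additionally makes the rate $|\omega|^{-\alpha_{\max}/2}$ explicit, matching the remark following the theorem.
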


\begin{proof}
For $H=\frac12$, we deduce from Lemma \ref{gnorm} that
\[
R(\omega)=\|G_\omega(0,\cdot)\|_{L^2(D)}^2\leqslant C|s|^{-1}.
\]
For $H\in(\frac12,1)$, by utilizing \eqref{eq:g1} and Lemma \ref{gnorm}, we obtain
\begin{equation*}
\begin{aligned}
R(\omega) &\leqslant\frac{C_H^2}{1-H}\|G_\omega(0,\cdot)\|_{L^2(D)}^2+2\pi C_H^2\|G_\omega(0,\cdot)\|_{L^2(D)}^2\leqslant C|s|^{-1}.
\end{aligned}
\end{equation*}
Then, the result for the case $H\in[\frac12,1)$ follows directly from the fact that $|s|\geqslant\sin\left(\frac{\pi\alpha_{\max}}2\right)|\omega|^{\alpha_{\max}}$, as given in Lemma \ref{lm:s}.

For $H\in(0,\frac12)$, the estimate \eqref{eq:g2} gives
\begin{equation*}
\begin{aligned}
R(\omega)&\leqslant C_H^2\|G_\omega(0,\cdot)\|_{L^2(D)}^2+\frac2H|G_\omega(0,0)|^2+2\|\mathscr T G_\omega(0,\cdot)\|_{L^2(\mathbb R)}^2,
\end{aligned}
\end{equation*}
where $\|G_\omega(0,\cdot)\|_{L^2(D)}^2\leqslant C|s|^{-1}$,
\[
|G_\omega(0,0)|^2=\left|\frac{1-{\rm e}^{2\sqrt s}}{\sqrt s(1+{\rm e}^{2\sqrt s})}\right|^2\leqslant|s|^{-1},
\]
and
\[
\|\mathscr T G_\omega(0,\cdot)\|_{L^2(\mathbb R)}^2\leqslant h\left(\Re[\sqrt s]\right)\leqslant 2\left|\Re[\sqrt s]\right|^{-1},\quad\Re[\sqrt s]\to\infty
\]
according to \eqref{eq:hbound} and \eqref{eq:Tg}.
It then suffices to estimate $\left|\Re[\sqrt s]\right|^{-1}$.
Note that
\[
\left|\Re[\sqrt s]\right|^{-1}=|s|^{-\frac12}\left|\cos\left(\frac{\arg(s)}2\right)\right|^{-1}
\]
and
\[
\left|\cos\left(\frac{\arg(s)}2\right)\right|=\sqrt{\frac{\cos\left(\arg(s)\right) + 1}{2}},
\]
where
\[
\cos(\arg(s))=\frac{\Re{[s]}}{|s|}=\frac{\sum\limits_{k=1}^{n}|\omega|^{\alpha_{k}}\cos\left(\frac{\pi\alpha_k}{2}\right)}{\sqrt{\sum\limits_{k=1}^{n}|\omega|^{2\alpha_{k}} + 2\sum\limits_{1\leqslant i<j\leqslant n}|\omega|^{\alpha_{i}+\alpha_{j}}\cos\left(\frac{\pi\alpha_i-\pi\alpha_j}{2}\right)}}\to\cos\frac{\pi\alpha_n}{2},\quad |\omega|\to\infty.
\]
We then get
\[
R(\omega)\leqslant C|s|^{-1}+C|s|^{-\frac12}\left(\frac{\cos\left(\arg(s)\right) + 1}{2}\right)^{-\frac12}\to0,\quad|\omega|\to\infty,
\]
which completes the proof.
\end{proof}

Theorem \ref{instability} implies that the reconstruction for $|\hat{f}(\omega)|$ using \eqref{Fhat} is unstable. More precisely, any small perturbation in the data $\mathbb{V}[\hat u(0,\omega)]$ will be significantly amplified in the reconstruction when $|\omega|$ is sufficiently large. The degree of ill-posedness follows a polynomial form of $|\omega|^{-\gamma}$, where
\begin{equation*}
\gamma=
\left\{
\begin{aligned}
& \alpha_{\max}, \quad&H\in[\frac12,1),\\
& \frac{\alpha_{\max}}{2}, \quad& H\in(0,\frac12).\\
\end{aligned}
\right.
\end{equation*}
Furthermore, for $H\in(0,\frac12)$, if $\alpha_n=2$, which is not covered by Theorem \ref{instability}, the limit behavior of $R(\omega)$ remains uncertain due to the facts that
\[
\lim_{|\omega|\to\infty}\cos(\arg(s))=-1
\]
and thus
\[
\lim_{|\omega|\to\infty}\cos\left(\frac{\arg(s)}{2}\right)=0,
\]
which makes the limit behavior of
\[
\left|\Re[\sqrt s]\right|^{-1}=|s|^{-\frac12}\left|\cos\left(\frac{\arg(s)}2\right)\right|^{-1}
\]
unclear as $|\omega|\to\infty$.

\section{Numerical experiments}\label{sec:5}

This section is dedicated to the numerical solutions of the direct and inverse problems for the two-term time-fractional stochastic diffusion-wave equation
\begin{equation}\label{numericeq}
\left\{
\begin{array}{lll}			\partial_{t}^{\alpha_1}u(x,t)+\partial_{t}^{\alpha_2}u(x,t)-\partial_{xx} u(x,t)=f(t)\dot{B}^H(x), &(x,t)\in D\times (0,T],\\
u(x,0)=0, & x\in D, \\
\partial_tu(x,0)=0, & x\in D,\quad \text{if}\ \  \alpha_2\in(1,2),\\
\partial_{x}u(0,t)=0, \ u(1,t)=0,& t\in [0,T],
\end{array}
\right.
\end{equation}
where $D=(0, 1)$, $T>0$, and $\alpha_i\in(0,2)$ for $i=1,2$ with $\alpha_1<\alpha_2$. To simplify notation, we use the vector $\bm \alpha:=[\alpha_1,\alpha_2]$.

\subsection{The direct problem}

To generate synthetic data, we employ the finite difference method (FDM) presented in \cite[Sections 2.3 and 3.1]{sun2020fractional} to discretize the direct problem \eqref{numericeq}. Specifically, we start by discretizing the temporal and spatial intervals into $N$ and $M$ subintervals with nodes as follows:
\[
t_n=n\tau,~n=0,1,\cdots,N, \quad x_m=mh,~m=0,1,\cdots,M,
\]
where  $\tau=T/N$ and $h=1/M$. We also define the increment and the variation of the fractional Brownian motion $B^H$ as
\[
\delta B^H_m:=B^H(x_{m+1})-B^H(x_m),\quad\delta\dot B^H_m:=\frac{\delta B^H_m}h.
\]

For $0<\alpha_1<\alpha_2<1$, the numerical scheme for solving \eqref{numericeq} reads (cf. \cite[Section 2.3]{sun2020fractional}):
\begin{equation*}
\left\{	
\begin{aligned}
&\delta_{t}^{\alpha_1}u_m^n+\delta_{t}^{\alpha_2}u_m^n-\delta_{x}^2u_m^n=f(t_n)\delta\dot B^H_m,\\
&u_m^0=0,\\
&\delta_xu_0^n=0,~u_M^n=0,
\end{aligned}
\right.
\end{equation*}
where $m=1, \dots, M-1, n=1, \dots, N$, $u_m^n$ is an approximation of the exact solution $u(x_m,t_n)$, and the difference operators $\delta_x$, $\delta_x^2$, and $\delta_t^{\alpha}$ are defined as
\begin{align*}
\delta_xu_m^n:&=\frac{u^n_{m+1}-u^n_{m-1}}{2h},\\
\delta_x^2u_m^n:&=\frac{u_{m-1}^n-2u_m^n+u_{m+1}^n}{h^2},\\
\delta_{t}^{\alpha}u_m^n:&=\frac1{\tau^{\alpha}\Gamma(2-\alpha)}\left[a_0^{(\alpha)}u^n_m-\sum\limits_{k=1}^{n-1}\left(a_{n-k-1}^{(\alpha)}-a_{n-k}^{(\alpha)}\right)u^k_m\right],\quad 0<\alpha<1
\end{align*}
with $a_l^{(\alpha)}:=(l+1)^{1-\alpha}-l^{1-\alpha}$ for $l\geqslant0$.
To avoid confusion, we mention that $\delta_x^2=\delta_x^-\delta_x^+$ is the combination of the forward and backward difference operators $\delta_x^+$ and $\delta_x^-$, rather than the combination $\delta_x\circ\delta_x$.

For $0<\alpha_1<1<\alpha_2<2$, the numerical scheme becomes an average on two adjacent levels (cf. \cite[Section 3.1]{sun2020fractional}):
\begin{equation*}
\left\{	
\begin{aligned}
&\frac{\delta_{t}^{\alpha_1}u_m^n+\delta_{t}^{\alpha_1}u_m^{n-1}}2+\delta_{t}^{\alpha_2}u_m^{n-\frac12}-\frac{\delta_{x}^2u_m^n+\delta_{x}^2u_m^{n-1}}2=f_{n-\frac12}\delta\dot B^H_m,\\
&u_m^0=0,\\
&\delta_xu_0^n=0,~u_M^n=0,
\end{aligned}
\right.
\end{equation*}
and, for $1<\alpha_1<\alpha_2<2$, the numerical scheme is
\begin{equation*}
\left\{	
\begin{aligned}
&\delta_{t}^{\alpha_1}u_m^{n-\frac12}+\delta_{t}^{\alpha_2}u_m^{n-\frac12}-\frac{\delta_{x}^2u_m^n+\delta_{x}^2u_m^{n-1}}2=f_{n-\frac12}\delta\dot B^H_m,\\
&u_m^0=0,\\
&\delta_xu_0^n=0,~u_M^n=0,
\end{aligned}
\right.
\end{equation*}
where $m=1, \dots, M-1, n=1, \dots, N$, 
\begin{align*}
f_{n-\frac12}:&=\frac{f(t_n)+f(t_{n-1})}2,\\
\partial_{t}^{\alpha}u_m^{n-\frac12}:&=\frac1{\tau^{\alpha}\Gamma(3-\alpha)}\left[b_0^{(\alpha)}(u^n_m-u^{n-1}_m)-\sum\limits_{k=1}^{n-1}\left(b_{n-k-1}^{(\alpha)}-b_{n-k}^{(\alpha)}\right)(u^k_m-u^{k-1}_m)\right],\quad 1<\alpha<2
\end{align*}
with $b_l^{(\alpha)}:=(l+1)^{2-\alpha_i}-l^{2-\alpha}$ for $l\geqslant0.$

The corresponding compact matrix form of the FDM can be summarized as follows:
\begin{equation}\label{FDM}
\left[\begin{array}{cccccc}
\beta & -2 & 0 &\ldots & 0&0 \\
-1 & \beta & -1 & \ldots & 0&0 \\
0 & -1 & \beta & \ldots & 0 &0\\
\vdots & \ddots & \ddots & \ddots & \vdots &\vdots\\
0 & \cdots & 0 & -1 & \beta & -1 \\
0 & \cdots & 0 & 0 & -1 & \beta
\end{array}\right]
\left[\begin{array}{c}
u_0^n\\
u_1^n \\
u_2^n \\
\vdots \\
\vdots \\
u_{M-1}^n
\end{array}\right]
=\left[\begin{array}{c}
w_0^n \\
w_1^n \\
w_2^n \\
\vdots \\
\vdots \\
w_{M-1}^n
\end{array}\right],
\end{equation}
where $n=1, \dots, N$, the diagonal entry $\beta$ and $w_{m}^n$ vary depending on the specific cases of $\alpha_j$, with $j=1,2$.

For $0<\alpha_1<\alpha_2<1$, the values of $\beta$ and $w_{m}^n$ are
\begin{align*}
\beta=&~2+\frac{h^2a_0^{(\alpha_1)}}{\tau^{\alpha_1}\Gamma(2-\alpha_1)}+\frac{h^2a_0^{(\alpha_2)}}{\tau^{\alpha_2}\Gamma(2-\alpha_2)},\\
w_m^n=&\sum\limits_{j=1}^2\sum\limits_{k=1}^{n-1}\frac{h^2}{\tau^{\alpha_j}\Gamma(2-\alpha_{j})}\left(a_{n-k-1}^{(\alpha_j)}-a_{n-k}^{(\alpha_j)}\right)u^k_m+h f(t_n)\delta B^H_m.
\end{align*}
For $0<\alpha_1<1<\alpha_2<2$, the values of $\beta$ and $w_{m}^n$ are given by
\begin{align*}
\beta=&~2+\frac{h^2a_0^{(\alpha_1)}}{\tau^{\alpha_1}\Gamma(2-\alpha_1)}+\frac{2h^2b_0^{(\alpha_2)}}{\tau^{\alpha_2}\Gamma(3-\alpha_2)},\\
w_m^n=&-\frac{h^2a_{1}^{(\alpha_1)}}{\tau^{\alpha_1}\Gamma(2-\alpha_{1})}u^{n-1}_m+\sum\limits_{k=1}^{n-2}\frac{h^2}{\tau^{\alpha_1}\Gamma(2-\alpha_{1})}\left(a_{n-k-2}^{(\alpha_1)}-a_{n-k}^{(\alpha_1)}\right)u^k_m+u^{n-1}_{m-1}-2u^{n-1}_{m}+u^{n-1}_{m+1}\\
&+\frac{2h^2}{\tau^{\alpha_2}\Gamma(3-\alpha_{2})}\left[b_{0}^{(\alpha_2)}u^{n-1}_m+\sum\limits_{k=1}^{n-1}\left(b_{n-k-1}^{(\alpha_2)}-b_{n-k}^{(\alpha_2)}\right)\left(u^k_m-u^{k-1}_m\right)\right]
+2h f_{n-\frac12}\delta B^H_m.
\end{align*}
For $1<\alpha_1<\alpha_2<2$, the values are
\begin{align*}
\beta=&~2+\frac{2h^2b_0^{(\alpha_1)}}{\tau^{\alpha_1}\Gamma(3-\alpha_1)}+\frac{2h^2b_0^{(\alpha_2)}}{\tau^{\alpha_2}\Gamma(3-\alpha_2)},\\
w_m^n=&\sum\limits_{j=1}^2\frac{2h^2}{\tau^{\alpha_j}\Gamma(3-\alpha_{j})}\left[b_{0}^{(\alpha_j)}u^{n-1}_m+\sum\limits_{k=1}^{n-1}\left(b_{n-k-1}^{(\alpha_j)}-b_{n-k}^{(\alpha_j)}\right)\left(u^k_m-u^{k-1}_m\right)\right]\\
&+u^{n-1}_{m-1}-2u^{n-1}_{m}+u^{n-1}_{m+1}
+2h f_{n-\frac12}\delta B^H_m.
\end{align*}

Based on the schemes described above, we obtain the numerical solution denoted as $u_0^n$, which serves as an approximation of the exact solution $u(0,t_n)$. The error estimate between the numerical solution $u_0^n$ and the exact solution $u(0,t_n)$ can be explored using a procedure analogous to the one employed in the deterministic case as presented in \cite{sun2020fractional}. However, this error analysis is outside the scope of the current work and, therefore, is not included here.

To further generate synthetic data based on the numerical solution $u_0^n$, we consider the incorporation of noise. Recognizing that observed data in practical scenarios are often subject to contamination from various sources, we introduce the following noisy data model:
\begin{align}\label{adnoisy}
u^{n,\epsilon}_0=u_0^n\left(1+\epsilon\eta_n\right),\quad n=0,\dots,N,
\end{align}
where $\epsilon > 0$ represents the noise level, and $\{\eta_n\}_{n=0,\ldots,N}$ is a sequence of independent random variables uniformly distributed between $-1$ and 1. The required data, denoted as
\[
\hat u_0^{n_{\omega}, \epsilon},\quad n_{\omega}=1,\cdots,N_{\omega}
\]
can be generated by performing a discrete Fourier transform on the noisy data $\{u^{n,\epsilon}_0\}_{n=0,\cdots,N}$ at specific  discrete frequencies $\{\omega_{n_{\omega}}\}_{n_{\omega}=1}^{N_{\omega}}$. The details of the frequency selection process will be presented in the following section.

\subsection{The inverse problem}

In this section, we present the numerical reconstruction of $\{|f(t_n)|\}_{n=0}^N$ at discrete points $\{t_n\}_{n=0}^N$. To achieve this, two steps are required. First, the phaseless Fourier modes ${|\hat f^\epsilon(\omega_{n_{\omega}})|}_{n_{\omega}=1}^{N_{\omega}}$ is obtained from the noisy data $\{\hat u_0^{n_{\omega},\epsilon}\}_{n_{\omega}=1}^{N_{\omega}}$ using \eqref{Fhat}, combined with a regularization technique. Second, the numerical approximation of ${|f(t_n)|}_{n=0}^N$ is reconstructed from $\{|\hat f^\epsilon(\omega_{n_{\omega}})|\}_{n_{\omega}=1}^{N_{\omega}}$ using the PhaseLift method.

\subsubsection{Spectral cut-off regularization}	

For the inverse problem, it is shown in Section \ref{sec:4.1} that $|\hat f(\omega)|$ can be uniquely determined through \eqref{Fhat}. Nevertheless, the reconstruction is characterized as unstable, as elaborated in Section \ref{sec:4.2}.
Consequently, in this context, a spectral cut-off regularization is employed when computing $\{|\hat f^\epsilon(\omega_{n_{\omega}})|\}_{n_{\omega}=1}^{N_{\omega}}$ from the noisy data $\{\hat u_0^{n_{\omega},\epsilon}\}_{n_{\omega}=1}^{N_{\omega}}$ using the formula
\begin{align}\label{hatFnoisy}
|\hat{f}^\epsilon(\omega_{n_{\omega}})|=\left(\frac{\mathbb{V}[\hat u_0^{n_{\omega}, \epsilon}]}{R(\omega_{n_\omega})}\right)^\frac{1}{2},\quad n_{\omega}=1,\cdots,N_{\omega}.
\end{align}
To address this ill-posed problem more effectively, we opt for $\omega_{n_{\omega}}=\text{linspace}(0,W,n_{\omega})$, where $W$ is designated as a regularization parameter. This choice removes high-frequency modes with $\omega>W$ from the noisy data.

Note that the second moment of the stochastic integral $R(\omega)$ involved in \eqref{Fhat} is independent of the data and can be computed in advance. Figure \ref{denH357} presents its values concerning $\omega$ for different values of $H=0.3,0.5,0.7$. The graph illustrates that for a fixed $H$ (resp. $\bm{\alpha}$), the value of $R(\omega)$ decreases more rapidly when $\alpha_i$ (resp. $H$) is larger. It is evident that the choice of the regularization parameter $W$ plays an essential role in the reconstruction, and its selection will be detailed in the subsequent numerical examples.

\begin{figure}[h]
  \centering
  \includegraphics[width=0.31\textwidth]{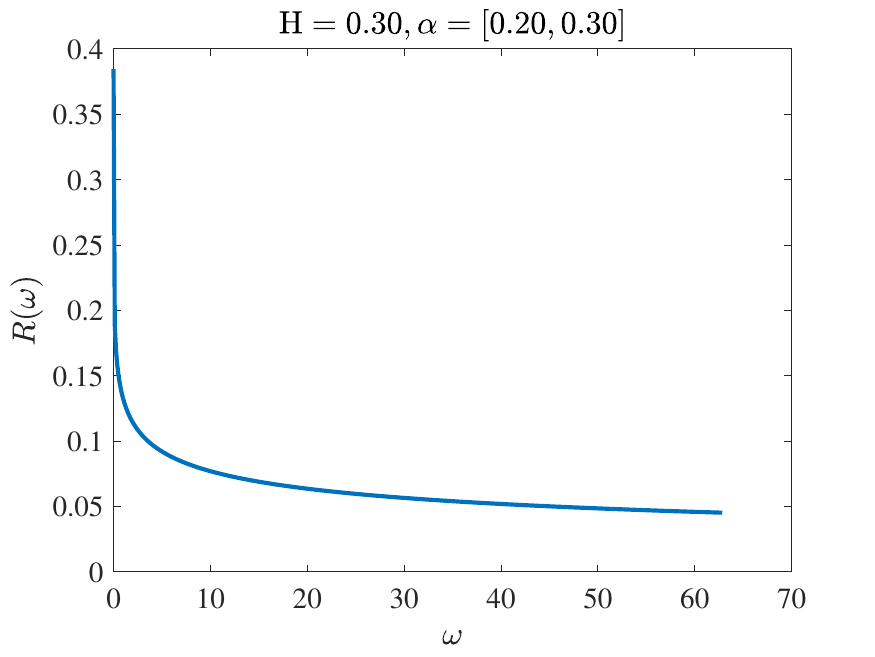}
  \includegraphics[width=0.31\textwidth]{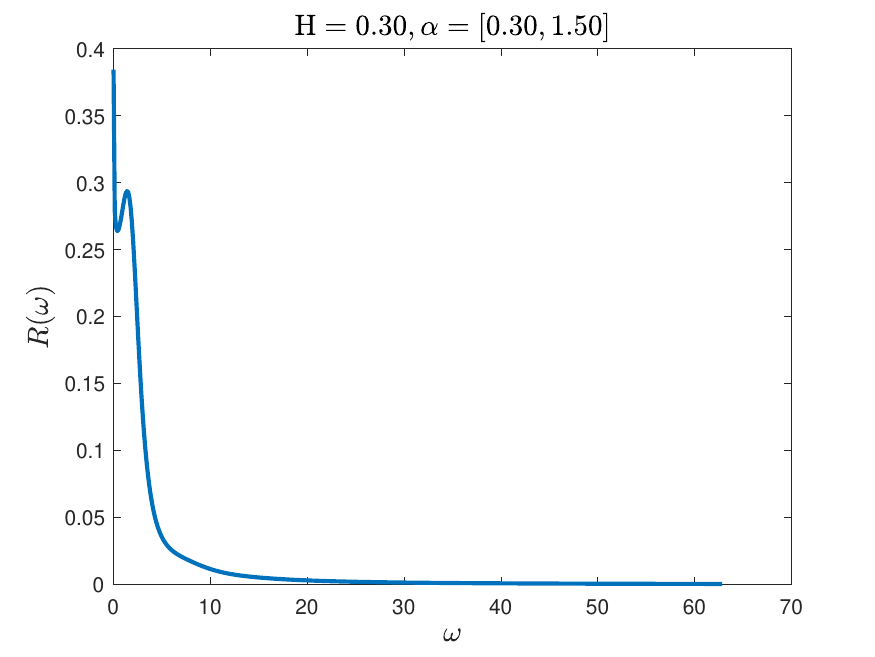}
  \includegraphics[width=0.31\textwidth]{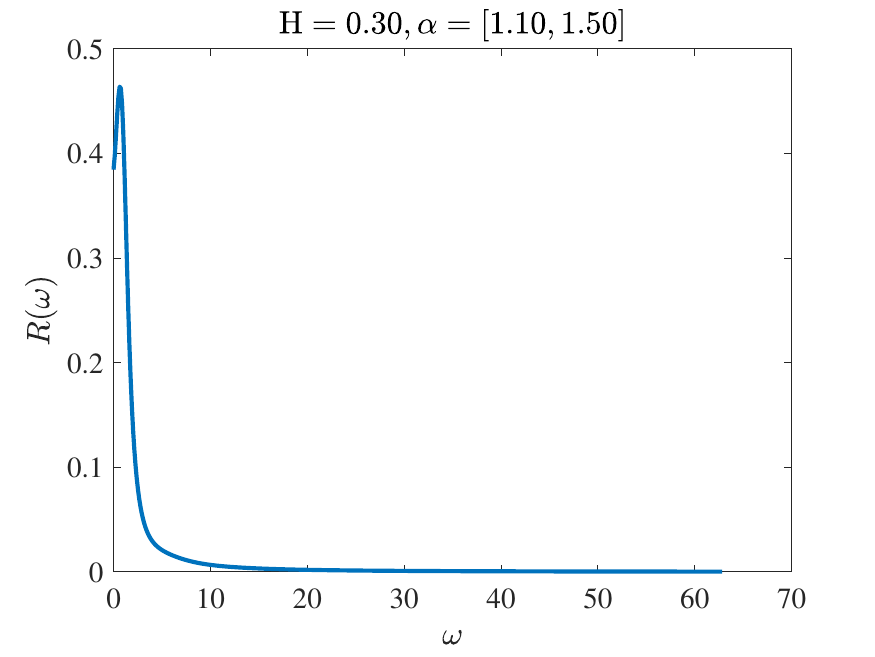}\\
  \includegraphics[width=0.31\textwidth]{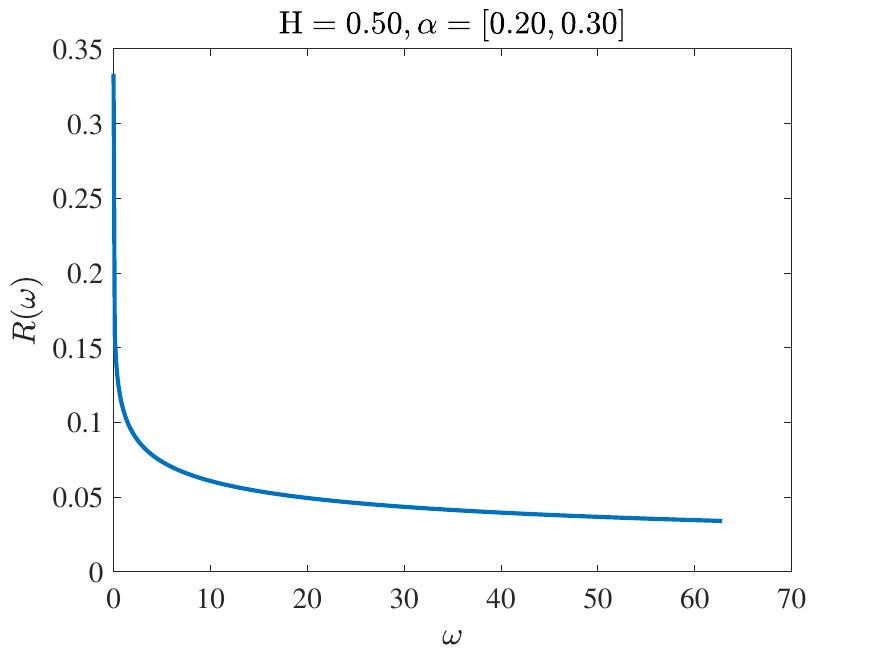}
  \includegraphics[width=0.31\textwidth]{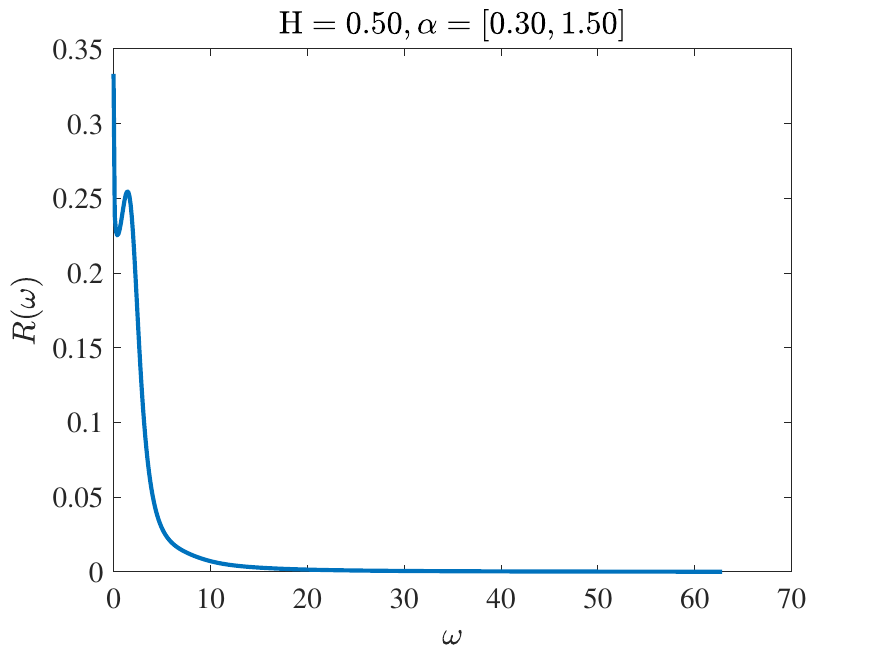}
  \includegraphics[width=0.31\textwidth]{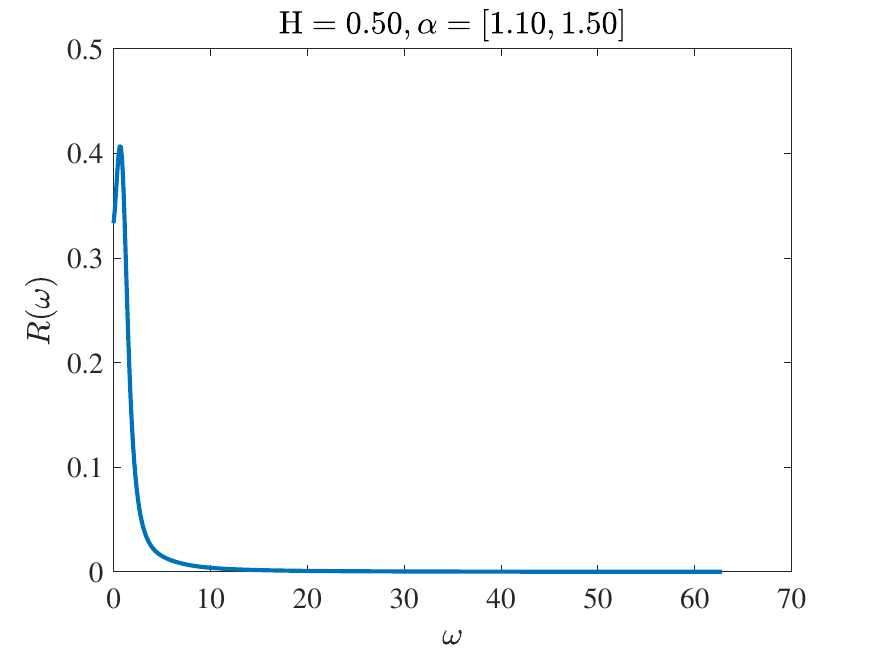}\\
  \includegraphics[width=0.31\textwidth]{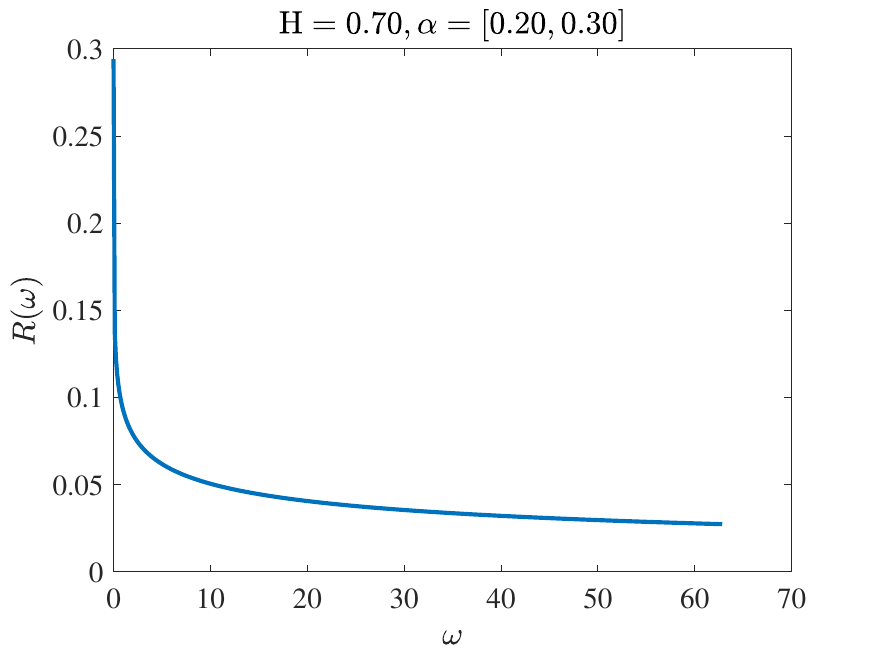}
  \includegraphics[width=0.31\textwidth]{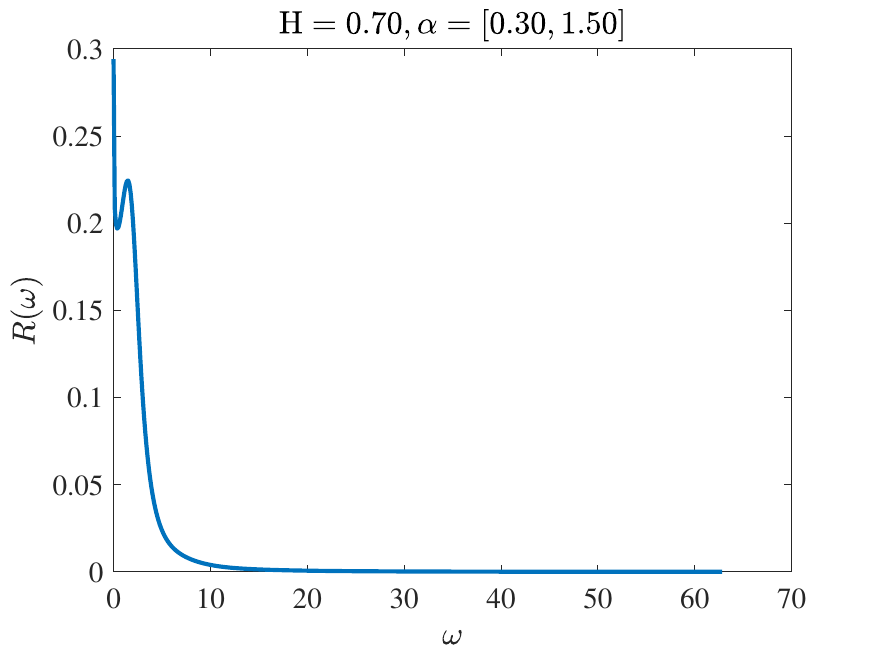}
  \includegraphics[width=0.31\textwidth]{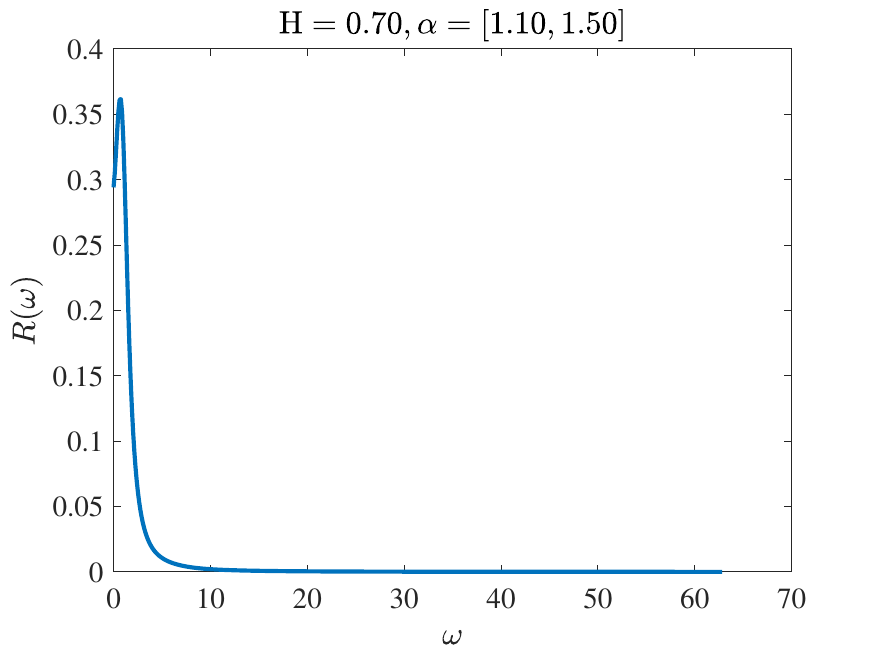}
  \caption{The values of $R(\omega)$ for (top) $H=0.3$, (middle) $H=0.5$, and (bottom) $H=0.7$ with different $\bm{\alpha}$.}\label{denH357}
\end{figure}

\subsubsection{PhaseLift algorithm}

Based on the phaseless Fourier modes $\{|\hat{f}^\epsilon(\omega_{n_{\omega}})|\}_{n_\omega=1}^{N_{\omega}}$ obtained earlier,  our next objective is to obtain the approximation $\{|f^{\epsilon}(t_n)|\}_{n=0}^N$ of $\{|f(t_n)|\}_{n=0}^N$ from $\{|\hat{f}^\epsilon(\omega_{n_{\omega}})|\}_{n_\omega=1}^{N_{\omega}}$. This problem, which involves reconstructing the signal at discrete points from the magnitude of its discrete Fourier transform, is known as the discrete phase retrieval problem \cite{harker1948phase,patterson1944ambiguities}.

The phase retrieval problem is evidently ill-posed and notoriously challenging to solve. In recent years, many researchers have demonstrated that it can be reformulated as an optimization problem. Consequently, several algorithms have been proposed to address this problem, including PhaseLift \cite{candes2015phase}, PhaseCut \cite{waldspurger2015phase}, and PhaseMax \cite{goldstein2018phasemax}.

The PhaseLift algorithm is employed to address our discrete phase retrieval problem, which comprises two primary components: multiple structured illumination and lifting. Multiple structured illumination is designed to obtain additional measurements by utilizing masks, optical gratings, or oblique illuminations artificially. Lifting is intended to reformulate the problem as a semidefinite programming problem.

We employ masks $\{M_i\}_{i=1,\cdots,N_m}$ to implement the multiple structured illumination. Each mask, denoted as $M_i\in\mathbb R^{N\times N}$ for $i=1,\cdots,N_m$, is a diagonal matrix. Specifically, the first mask, $M_1=I$, is chosen as the identity matrix. The entries of the other masks are randomly set to 0 or 1 to create random diffraction patterns. By substituting the discrete source function
\[
\bm f:=(f(t_0),\cdots,f(t_N))^\top
\]
with sources using the masks, i.e.,
\[
^i\bm f:=M_i\bm f,\quad i=1,\cdots,N_m
\]
in the numerical scheme \eqref{FDM}, we obtain additional discrete solutions $\{ ^iu_0^n\}^{i=1,\cdots,N_m}_{n=0,\cdots,N}$ and thus more noisy data $\{^i\hat u_0^{n_\omega,\epsilon}\}_{n_\omega=1,\cdots,N_\omega}^{i=1,\cdots,N_m}$ in the frequency domain. These can be utilized to derive more phaseless Fourier modes $\{|^i\hat{f}^{\epsilon}(\omega_{n_\omega})|\}_{n_\omega=1,\cdots,N_\omega}^{i=1,\cdots,N_m}$. This procedure can be summarized as follows:
\[
\begin{CD}
\text{\fbox{$^i\bm f$}} @>{\phantom{\text{me}}}\eqref{FDM}{\phantom{\text{me}}}>\text{FDM}> \text{\fbox{$^iu_0^n$}}@>{\phantom{\text{m}}}\eqref{adnoisy}{\phantom{\text{m}}}>\text{noisy solution}> \text{\fbox{$^iu_0^{n,\epsilon}$}}@>{\phantom{\text{m}}}\text{discrete Fourier}{\phantom{\text{m}}}>\text{transform}> \text{\fbox{$^i\hat u_0^{n_{\omega},\epsilon}$}}@>{\phantom{\text{m}}}\eqref{hatFnoisy}{\phantom{\text{m}}}>\text{Spectral\, cutoff}> \text{\fbox{$|^i\hat{f}^{\epsilon}(\omega_{n_\omega})|$}}\,.
\end{CD}
\]

We refer to \cite{candes2015phase, gong2021numerical} for further details on the implementation of the PhaseLift method. Additionally, we suggest consulting \cite{chandra2019international} for access to the specific code used in the PhaseLift algorithm.

\subsubsection{Numerical examples}

In this subsection, we present three illustrative examples to demonstrate the effectiveness of the numerical approach. In all these numerical examples, the values for the final time $T$, as well as the numbers of subintervals in time $N$ and space $M$, are set as follows:
\[
T=4\pi,~N=100,~M=128.
\]		
Furthermore, to approximate the variance of the solution in \eqref{hatFnoisy}, we take a total of $P$ sample paths. The specific choice for the parameter $P$ will be detailed in each individual numerical example.

\begin{example}\label{exm1}
Consider a smooth function $f(t) = \sin(t)\exp (-t/6)$.
\end{example}

In Example \ref{exm1}, multiple tests are conducted to illustrate the impact of various parameters on the numerical implementation. These parameters include the regularization parameter $W$, the quantity of masks $N_m$, the number of sample paths $P$, the Hurst parameter $H$, the order $\bm{\alpha}$ of the fractional derivative, and the noise level $\epsilon$.

\begin{figure}[h]
  \centering
  \includegraphics[width=0.3\textwidth]{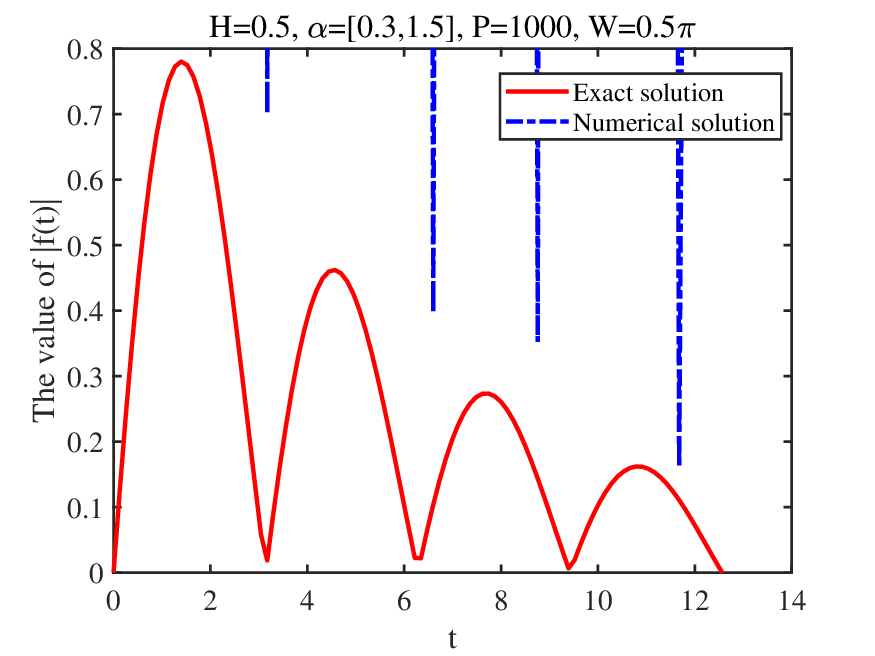}
  \includegraphics[width=0.3\textwidth]{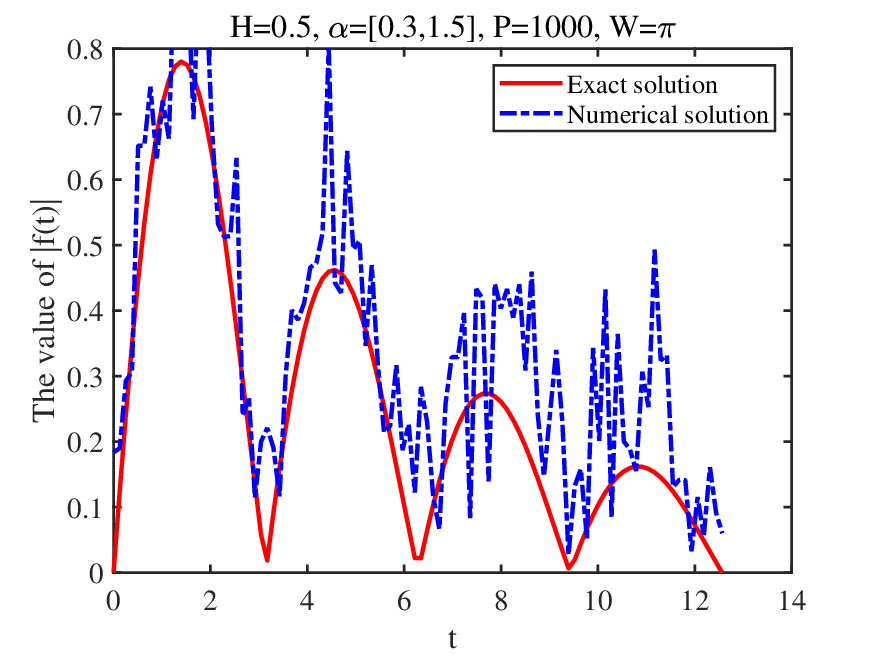}
  \includegraphics[width=0.3\textwidth]{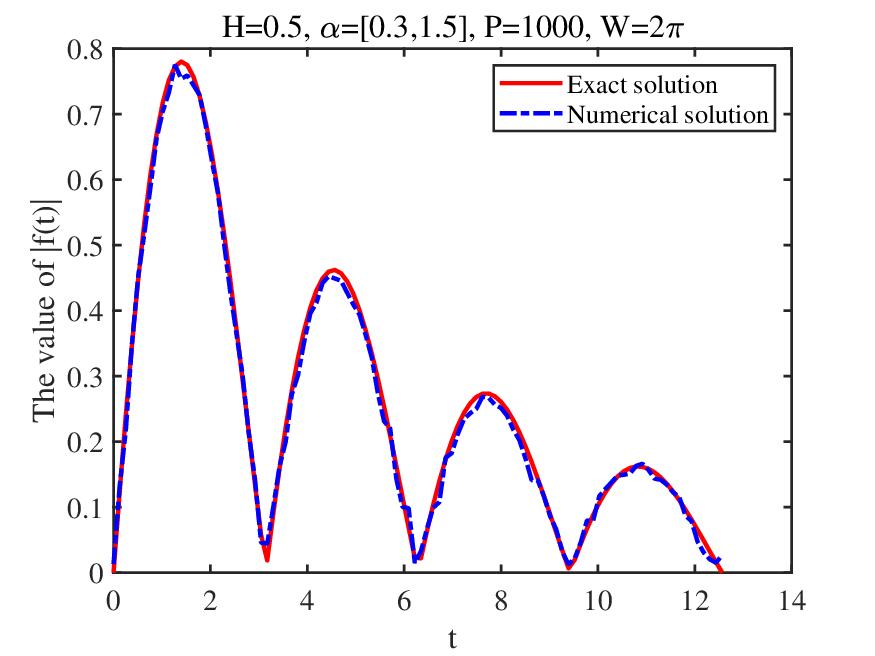}\\
  \includegraphics[width=0.3\textwidth]{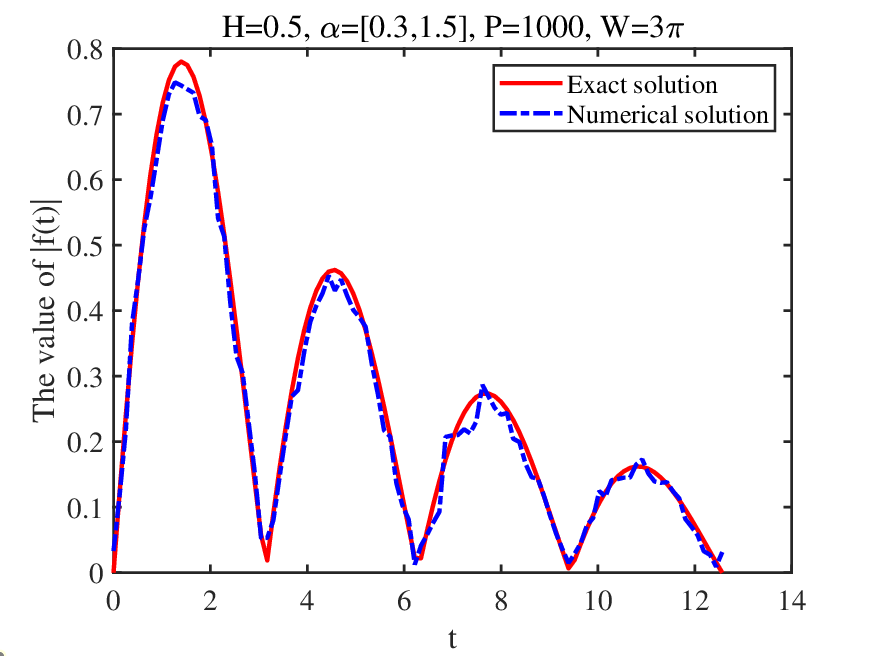}
  \includegraphics[width=0.3\textwidth]{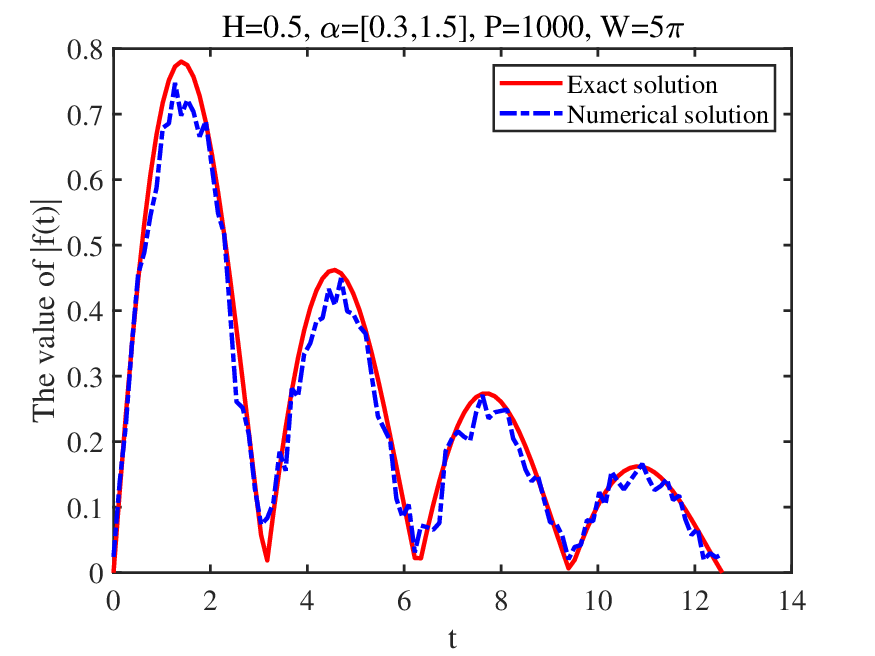}
  \includegraphics[width=0.3\textwidth]{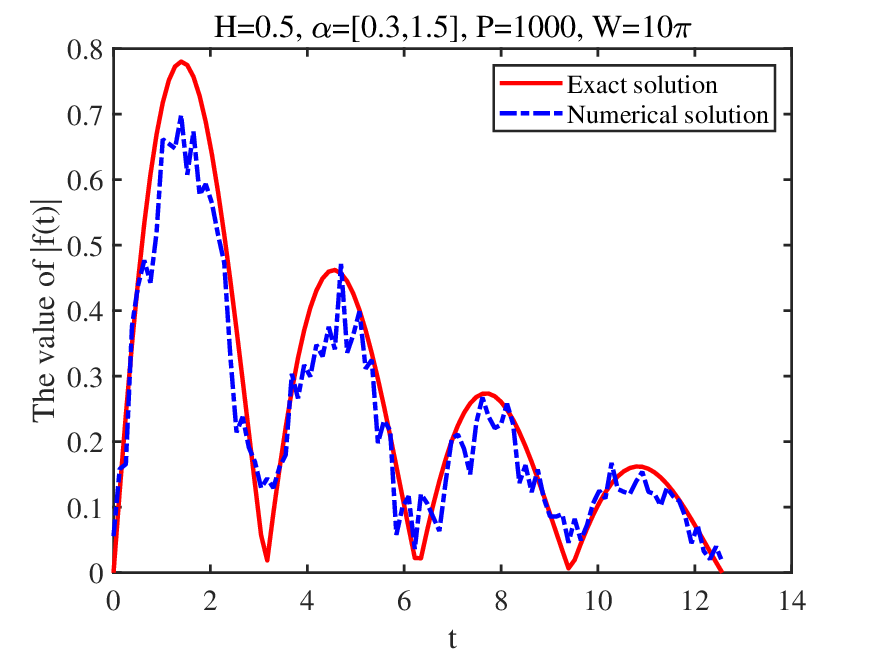}
  \caption{Example \ref{exm1}: Reconstruction of $|f(t)|$ with varying values of $W=0.5\pi$, $\pi$, $2\pi$, $3\pi$, $5\pi$, $10\pi$, while keeping other parameters fixed ($H=0.5,~\bm{\alpha}=[0.3,1.5],~P=1000,~N_m=60,~\epsilon=5\%$).}\label{e1H5diffW}
\end{figure}

Figure \ref{e1H5diffW} presents the numerical results for the reconstruction of $|f(t)|$ in Example \ref{exm1}, utilizing different spectral cut-off regularization parameters, specifically $W=0.5\pi, \pi, 2\pi, 3\pi, 5\pi, 10\pi$. The remaining parameters are held constant: $H=0.5$, $\bm{\alpha}=[0.3,1.5]$, $P=1000$, $N_m=60$, and $\epsilon=5$. The results demonstrate that the reconstruction quality is not satisfactory when $W$ is excessively small, indicating insufficient information acquisition in the frequency domain, or when it is excessively large, leading to instability in the inverse problem. Consequently, it is crucial to select appropriate regularization parameters, tailored to the specific case at hand. To guide this selection, we refer to the values of $R(\omega)$ in relation to $\omega$, as illustrated in Figure \ref{denH357}. In forthcoming numerical tests, we adopt $W=10\pi$ for cases with $\bm{\alpha}=[0.2,0.3]$ and select $W=3\pi$ for those with $\bm{\alpha}=[0.3,1.5]$ or $[1.1,1.5]$.

\begin{figure}[h]
  \centering
  \includegraphics[width=0.3\textwidth]{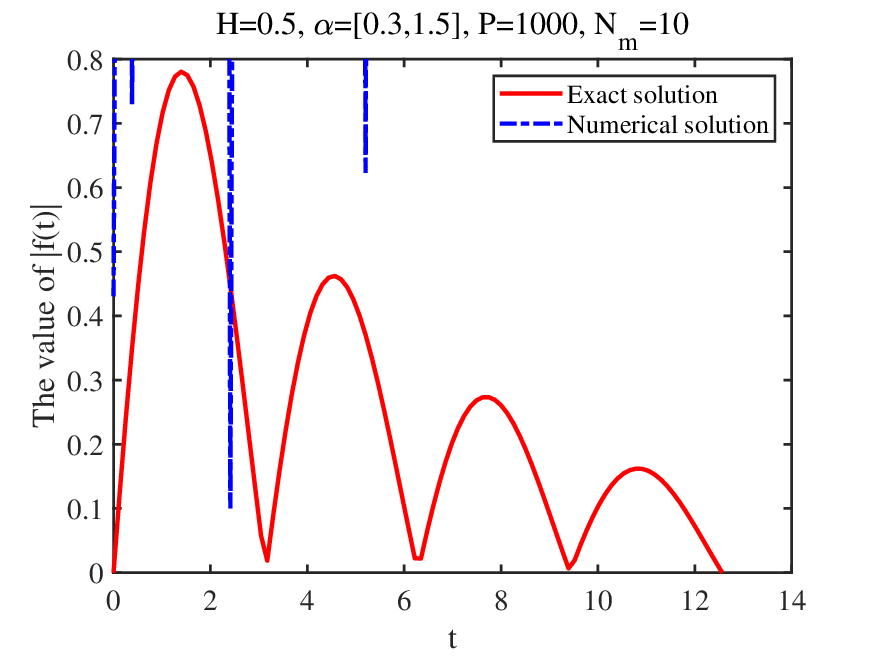}
  \includegraphics[width=0.3\textwidth]{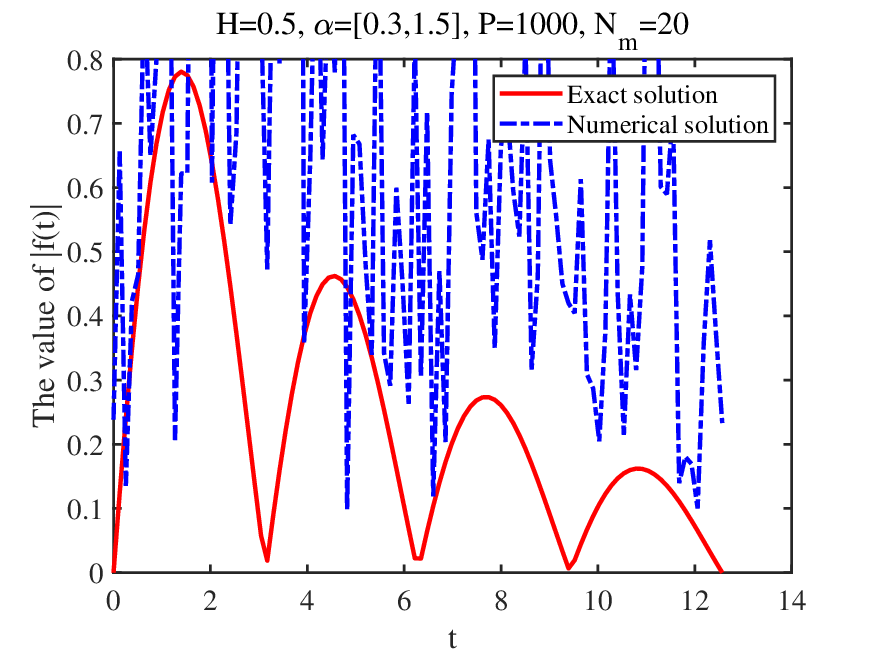}
  \includegraphics[width=0.3\textwidth]{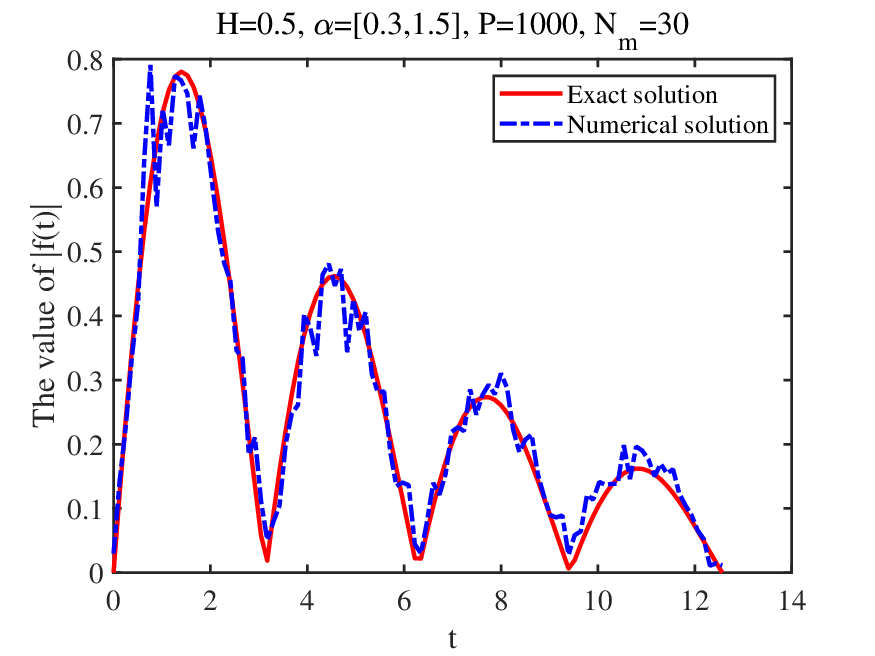}\\
  \includegraphics[width=0.3\textwidth]{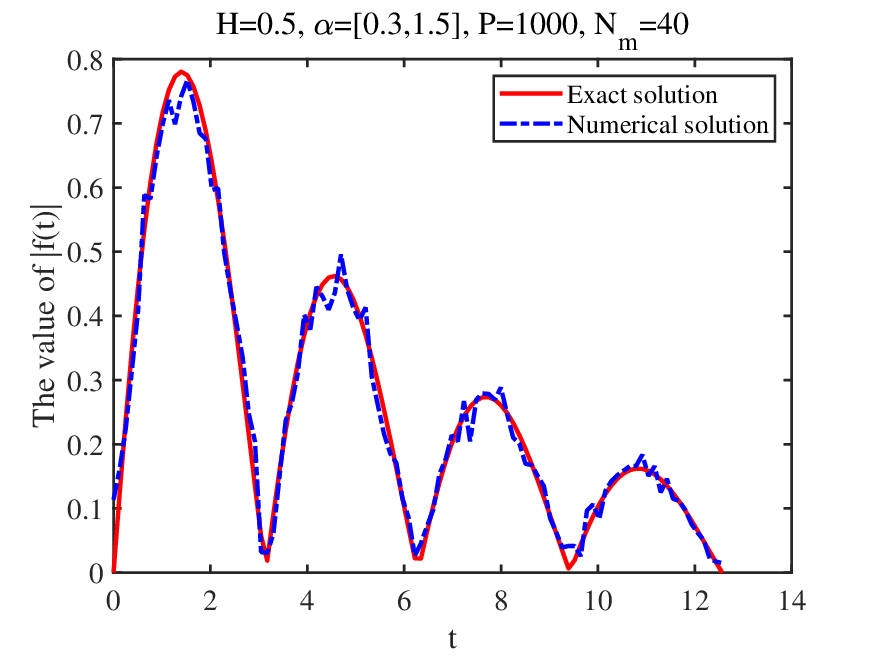}
  \includegraphics[width=0.3\textwidth]{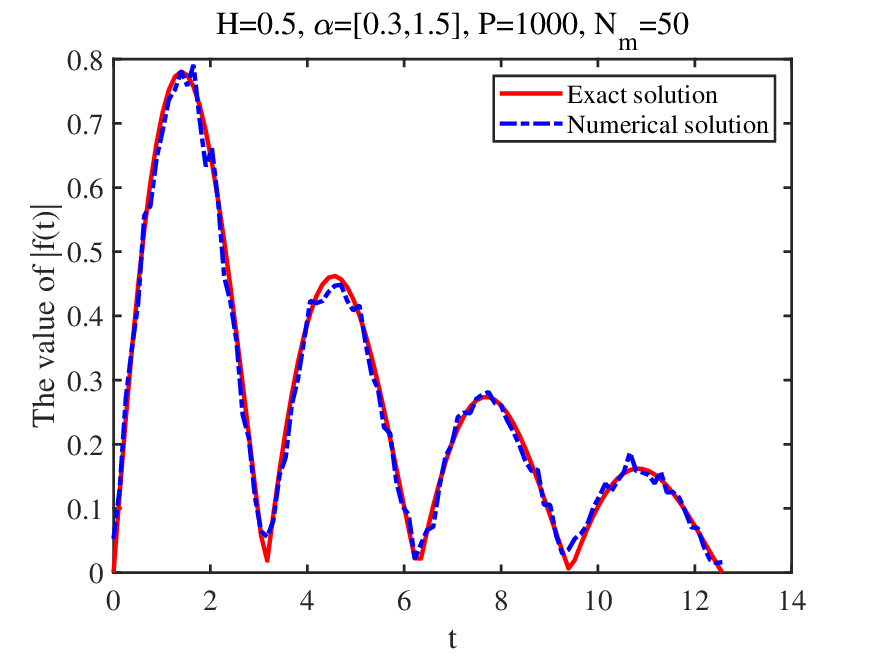}
  \includegraphics[width=0.3\textwidth]{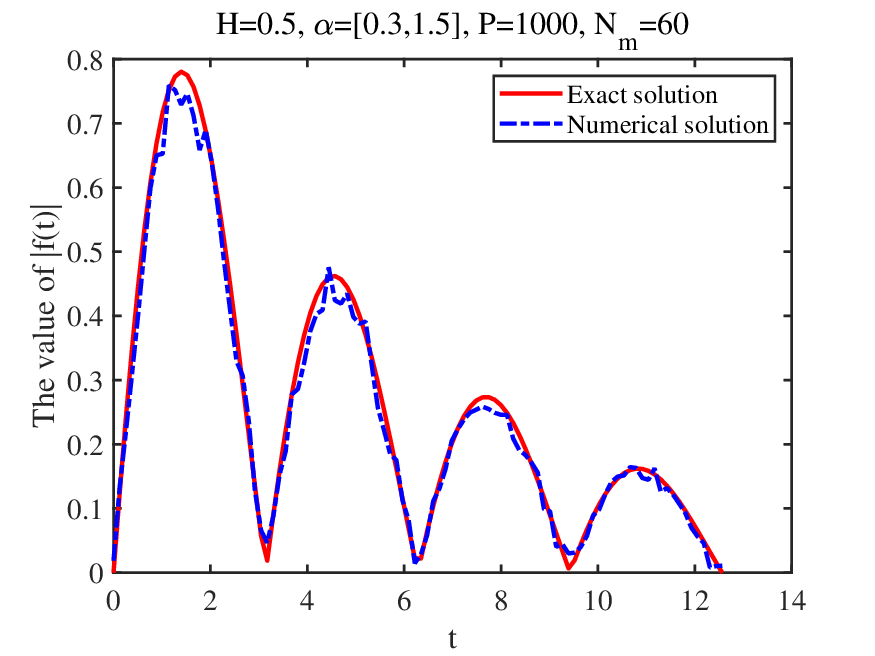}
  \caption{Example \ref{exm1}: Reconstruction of $|f(t)|$ with varying values of $N_m=10:10:60$, while keeping other parameters fixed ($H=0.5$, $\bm{\alpha}=[0.3,1.5]$, $P=1000$, $W=3\pi$, $\epsilon=5\%$).}\label{e1H5diffN}
\end{figure}

In Figure \ref{e1H5diffN}, we investigate the impact of varying the number of masks, denoted as $N_m$, in Example \ref{exm1}, while maintaining constant values for $W=3\pi$, $H=0.5$, $\bm{\alpha}=[0.3,1.5]$, $P=1000$, and $\epsilon=5$. The findings illustrate the necessity of employing a sufficient number of masks to ensure the acquisition of an adequate quantity of diffraction patterns, thereby enabling an accurate reconstruction. Based on the numerical results depicted in Figure \ref{e1H5diffN}, for subsequent numerical tests, we always choose $N_m=60$.

\begin{figure}[h]
  \centering
  \includegraphics[width=0.3\textwidth]{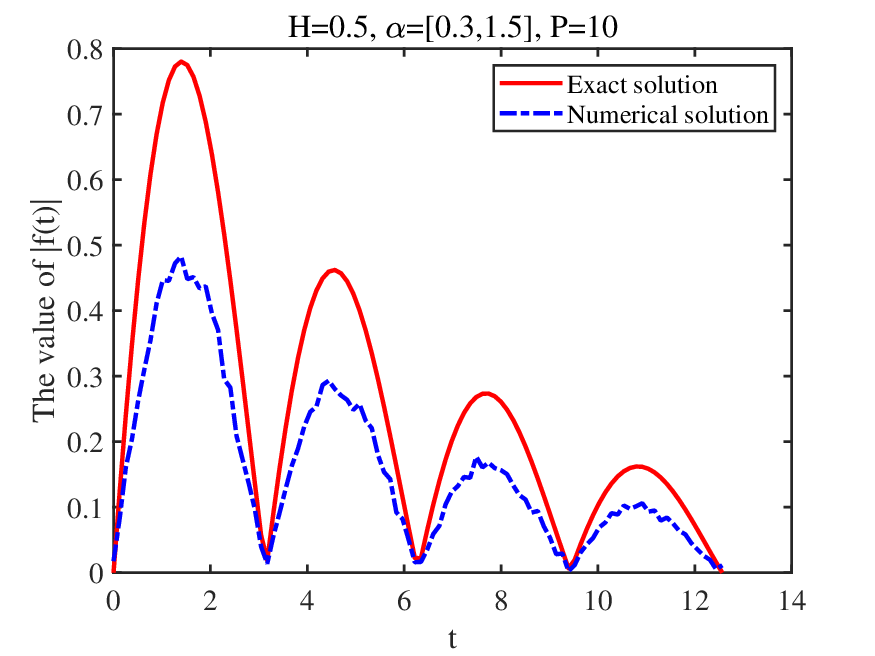}
  \includegraphics[width=0.3\textwidth]{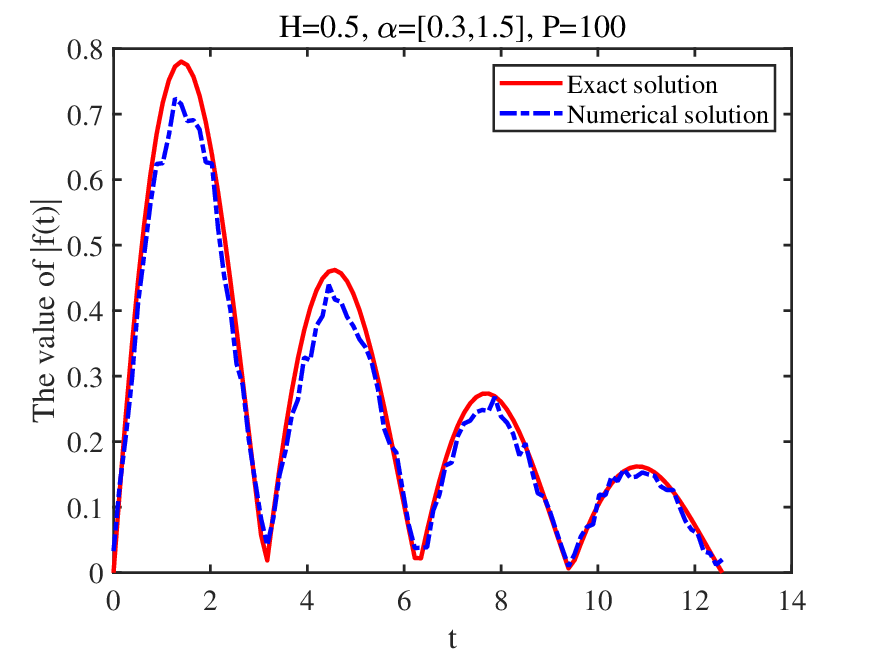}
  \includegraphics[width=0.3\textwidth]{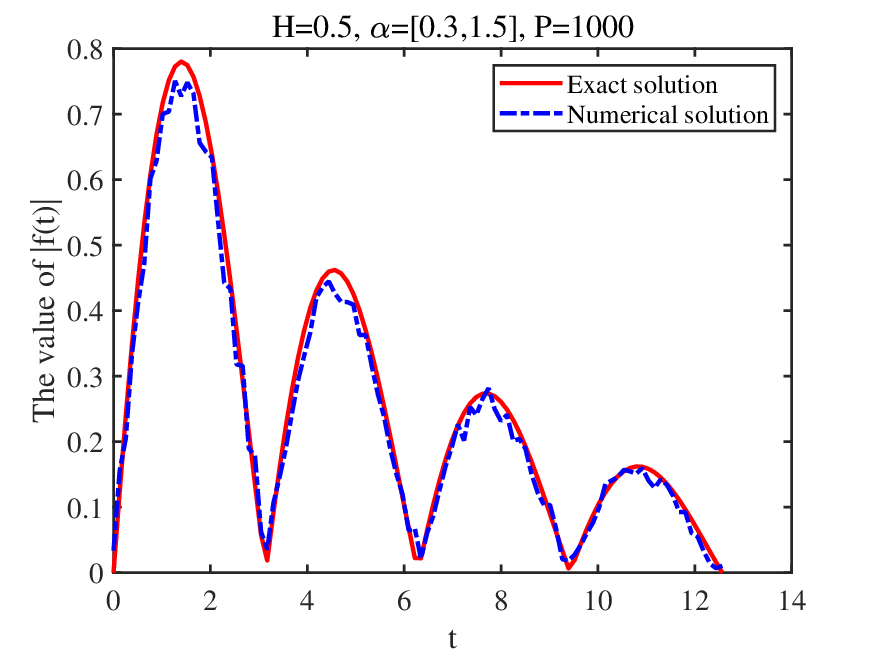}
  \caption{Example \ref{exm1}: Reconstruction of $|f(t)|$ with varying values of $P=10,100,1000$, while keeping other parameters fixed ($H=0.5$, $\bm{\alpha}=[0.3,1.5]$, $W=3\pi$, $N_m=3\pi$, $\epsilon=5\%$).}\label{e1H5diffP}
\end{figure}

Figure \ref{e1H5diffP} displays the numerical results of the reconstruction of $|f(t)|$ in Example \ref{exm1} under different sample path quantities, denoted as $P=10,100,1000$, while maintaining fixed values for $H=0.5$, $\bm{\alpha}=[0.3,1.5]$, $W=3\pi$, $N_m=3\pi$, and $\epsilon=5$. The observations indicate that the quality of reconstruction improves as more sample paths are employed to approximate the solution's variance, aligning with the principles of the law of large numbers. Notably, the numerical results suggest that a satisfactory level of reconstruction is already achieved with the choice of $P=1000$. Consequently, we use $P=1000$ as the fixed sample path quantity in subsequent experiments.

\begin{figure}[h]
  \centering
  \includegraphics[width=0.3\textwidth]{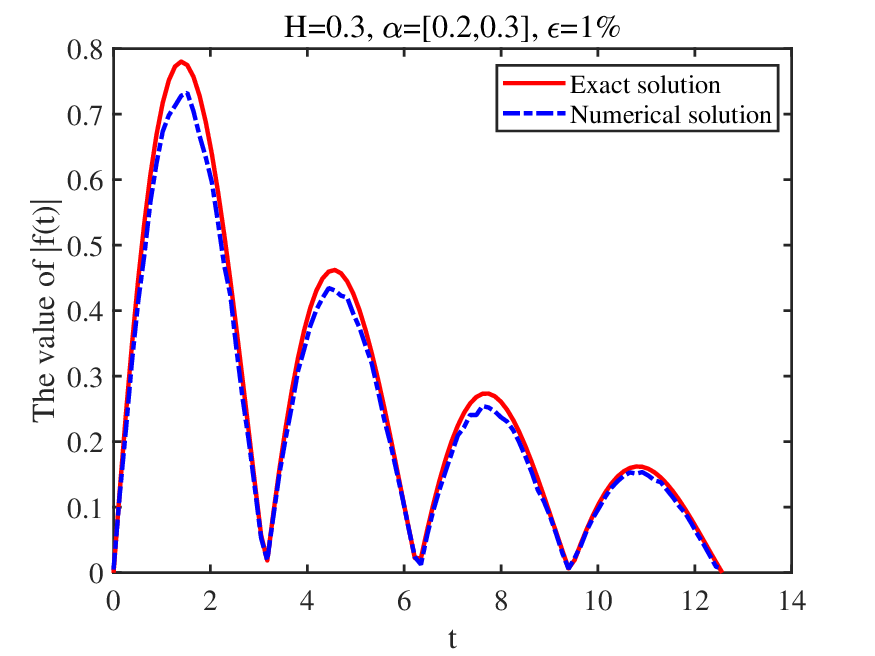}
  \includegraphics[width=0.3\textwidth]{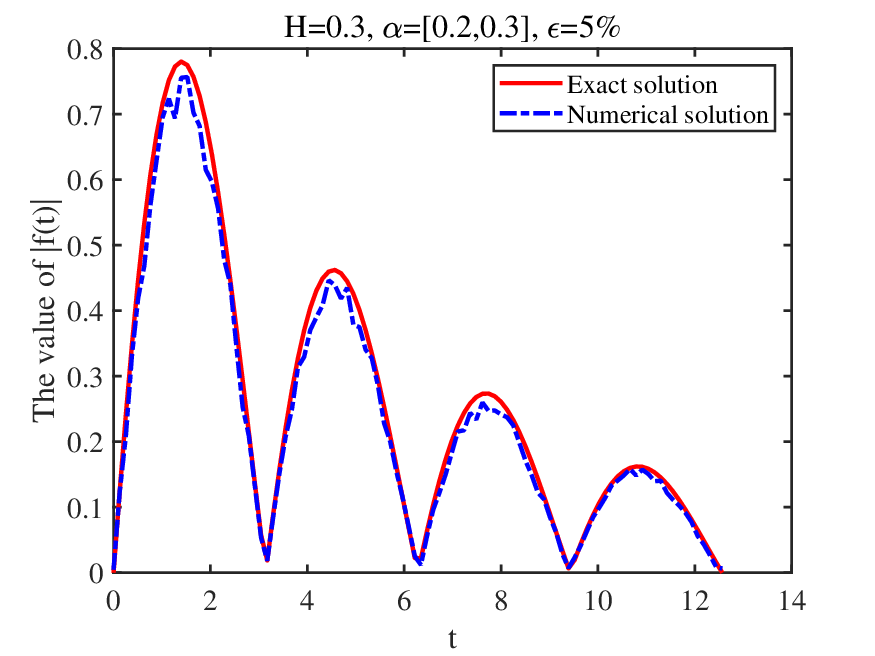}
  \includegraphics[width=0.3\textwidth]{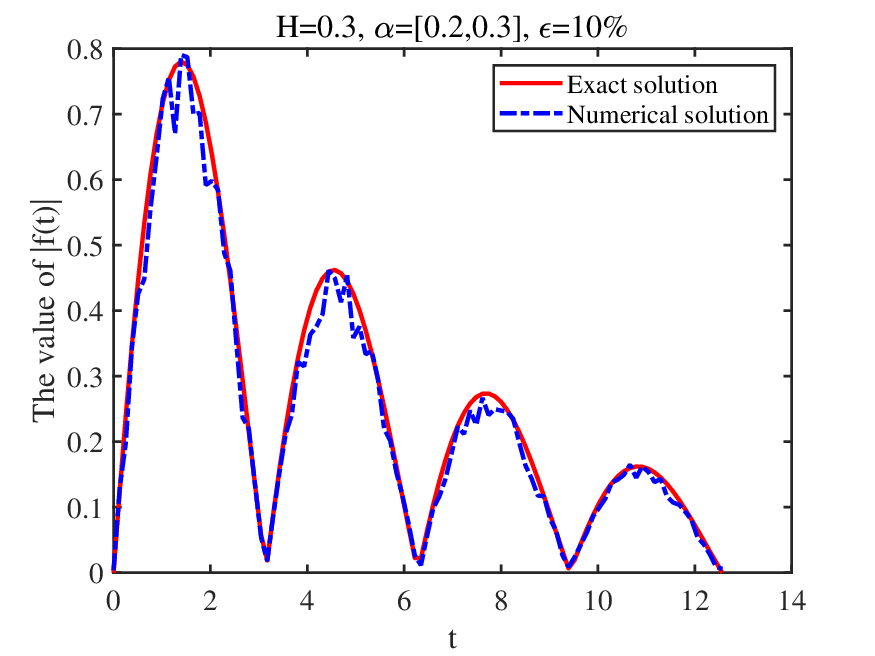}\\
  \includegraphics[width=0.3\textwidth]{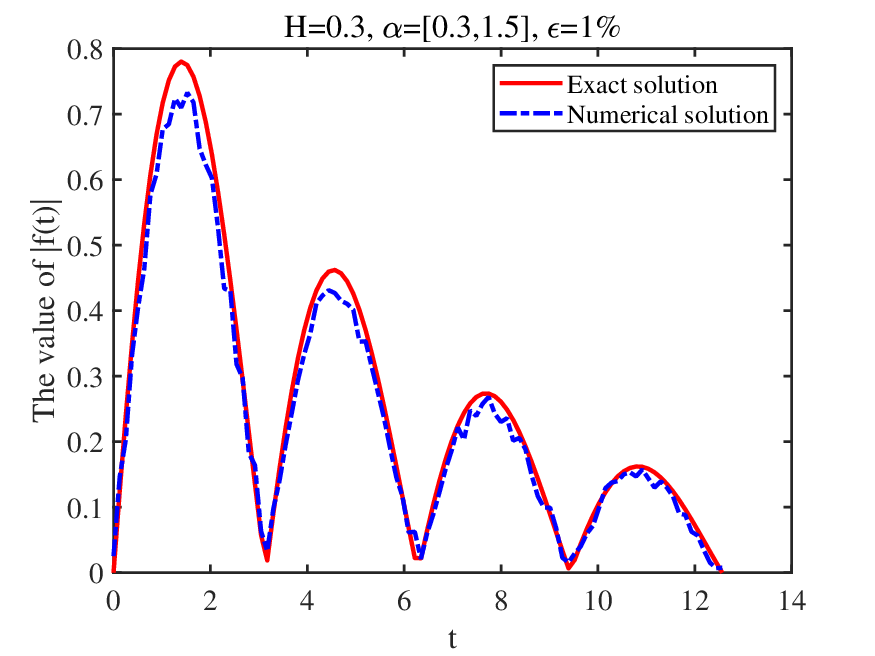}
  \includegraphics[width=0.3\textwidth]{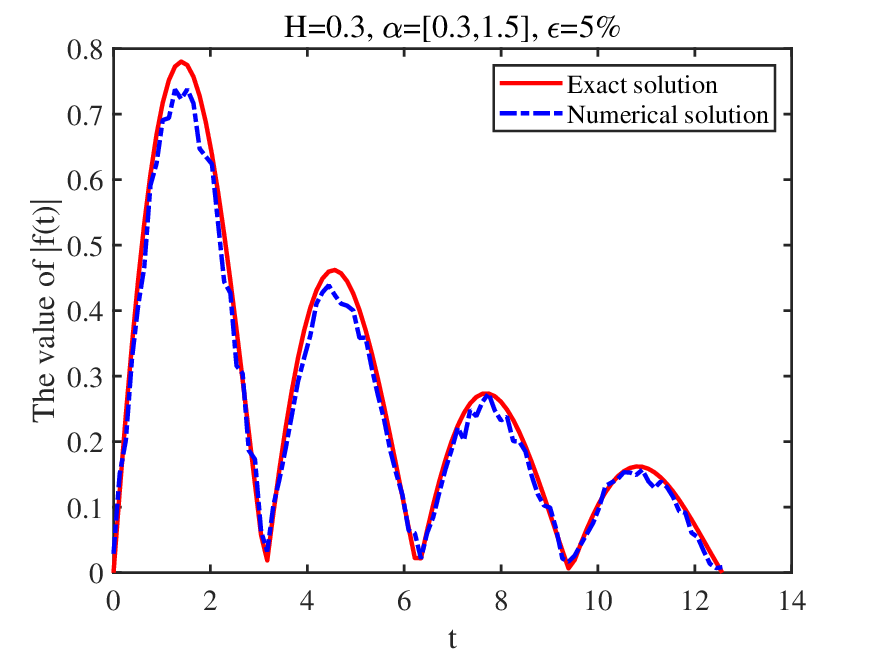}
  \includegraphics[width=0.3\textwidth]{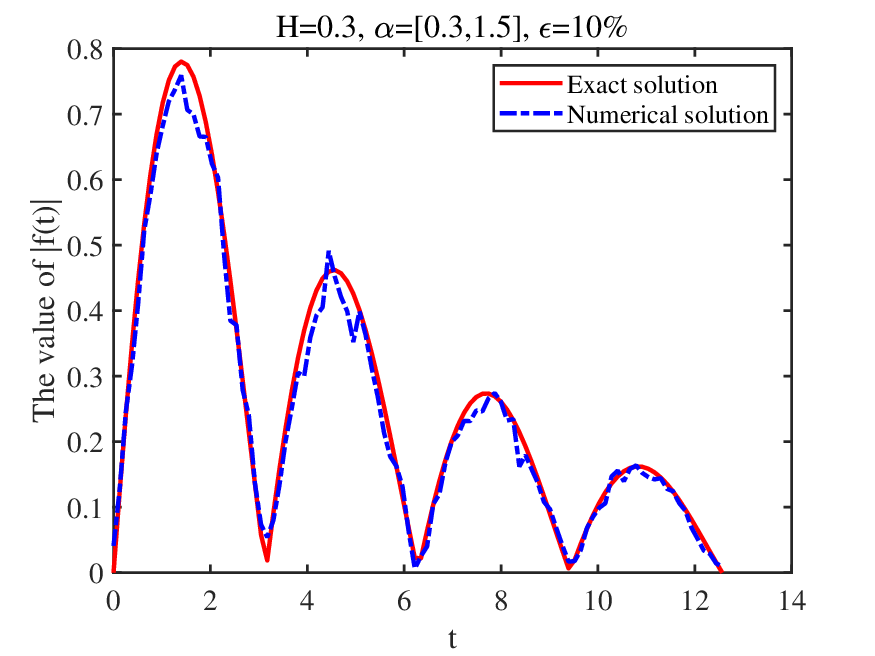}\\
  \includegraphics[width=0.3\textwidth]{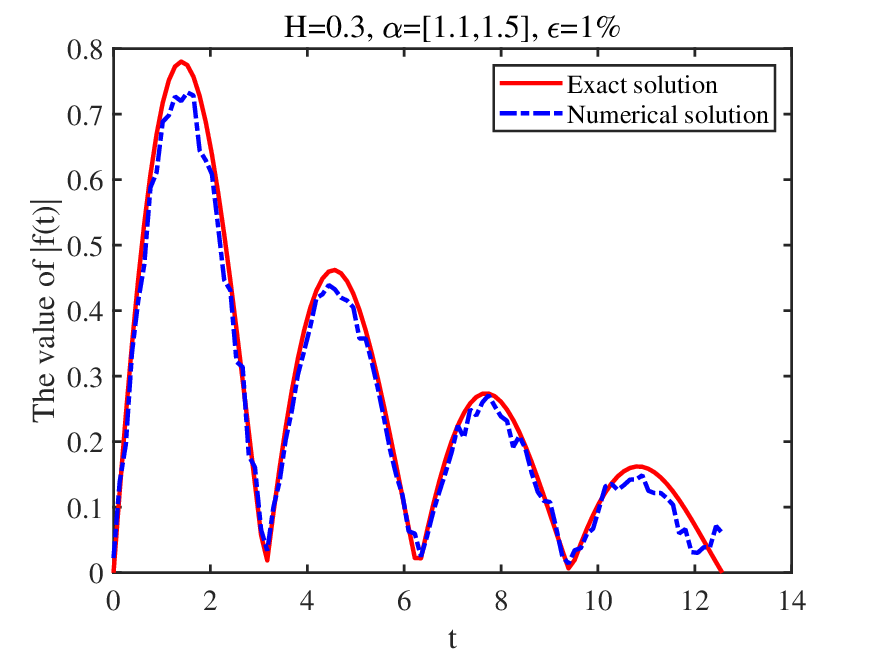}
  \includegraphics[width=0.3\textwidth]{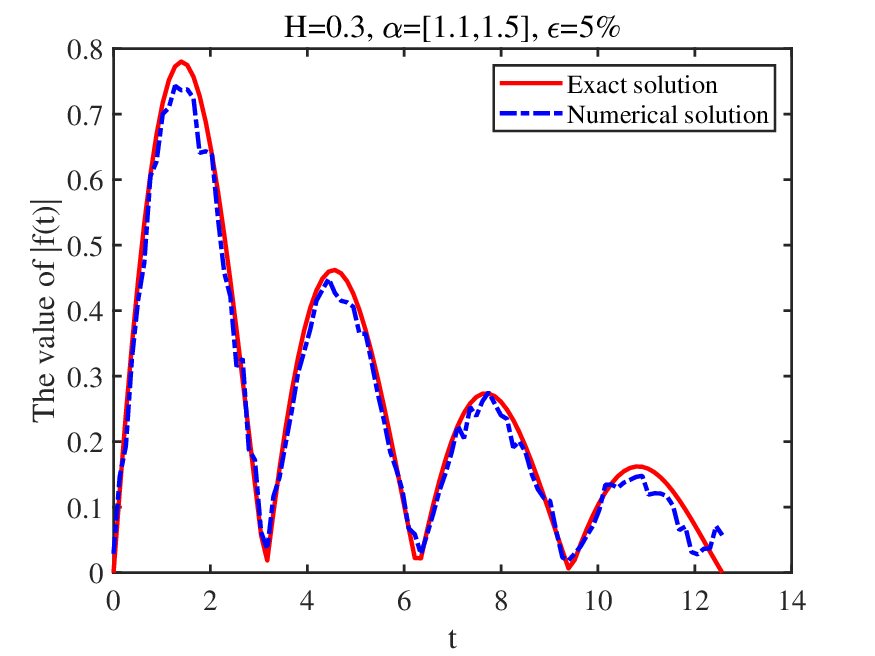}
  \includegraphics[width=0.3\textwidth]{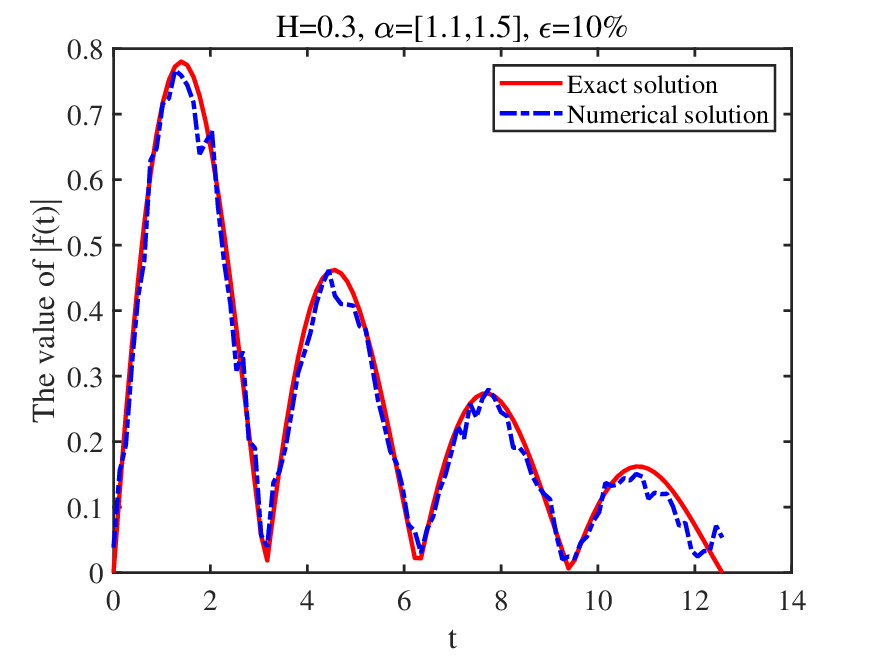}
  \caption{Example \ref{exm1}: Reconstruction of $|f(t)|$ with different levels of noise: (left) $\epsilon=1\%$, (middle) $\epsilon=5\%$, and (right) $\epsilon=10\%$, while varying the values of $\bm{\alpha}$ under the constant Hurst parameter $H=0.3$.}\label{e1H3}
\end{figure}

Figures \ref{e1H3}--\ref{e1H7} depict the influence of parameters $H=0.3,0.5,0.7$, $\bm{\alpha}=[0.2,0.3]$, $[0.3,1.5]$, $[1.1,1.5]$, and $\epsilon=1\%, 5\%, 10\%$, while keeping $W$, $N_m$, and $P$ fixed, as previously specified. Upon examination of the subfigures within each row, it becomes evident that, for fixed values of $H$ and $\bm{\alpha}$, the results exhibit relatively higher quality when the noise level, $\epsilon$, is reduced. Likewise, within each column of these figures, when both $H$ and $\epsilon$ are held constant, decreasing the value of $\bm{\alpha}$ leads to improved results. These trends align with the prior theoretical analysis. Furthermore, through a comparative assessment of the results at corresponding positions in Figures \ref{e1H3}--\ref{e1H7}, it appears that the outcomes are less sensitive to variations in the Hurst index $H$ when compared to alterations in other parameters.

\begin{figure}[h]
  \centering
  \includegraphics[width=0.3\textwidth]{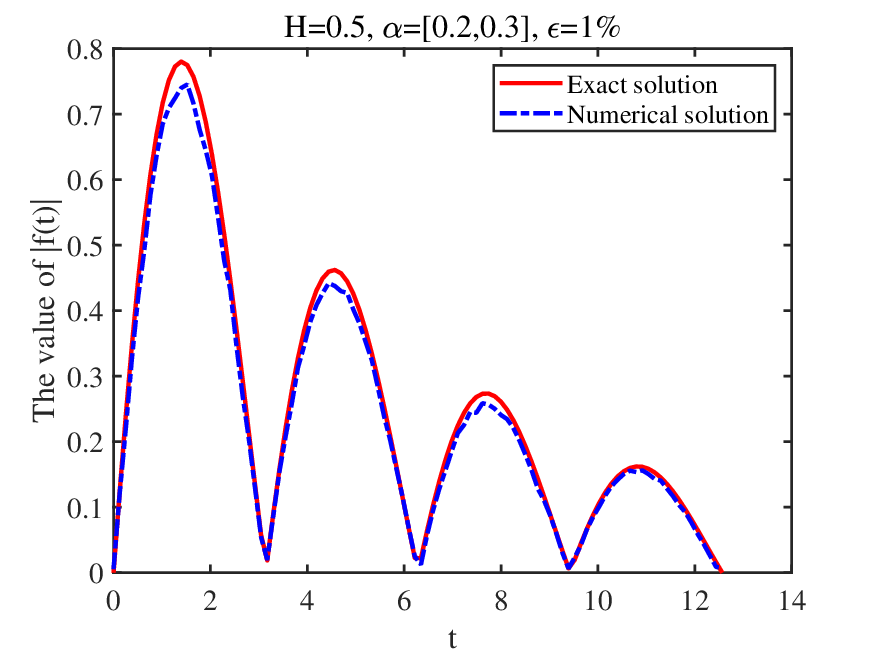}
  \includegraphics[width=0.3\textwidth]{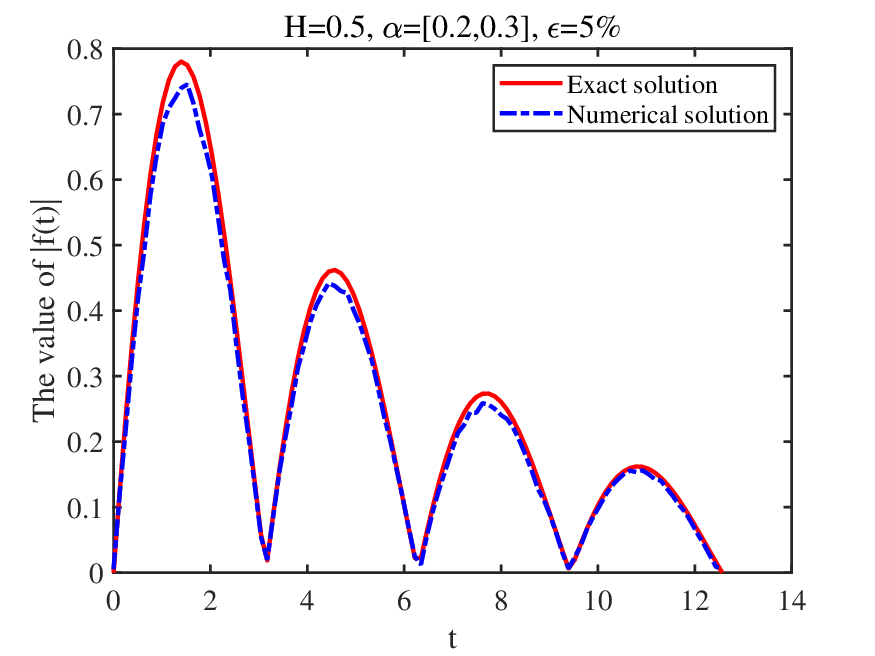}
  \includegraphics[width=0.3\textwidth]{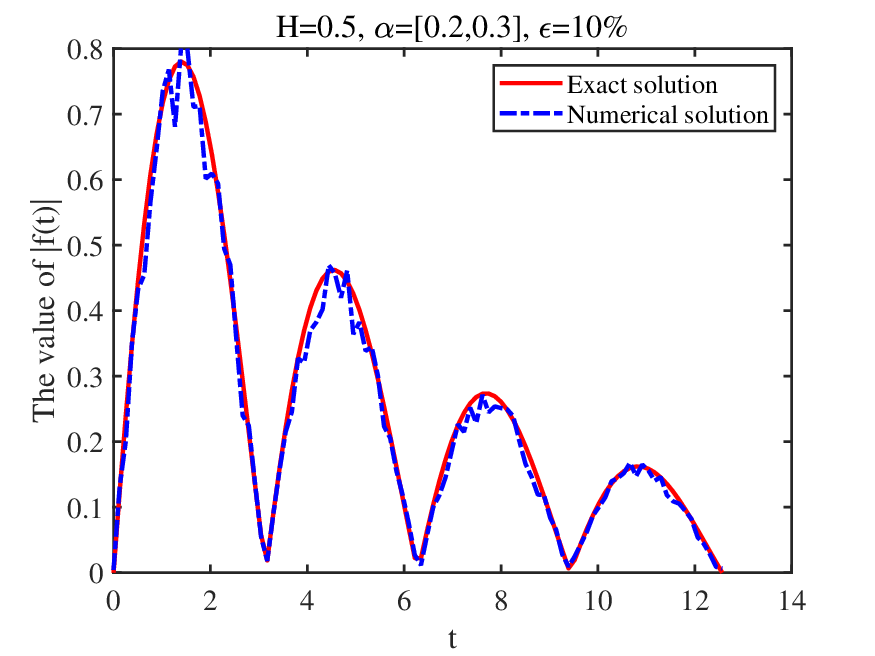}\\
  \includegraphics[width=0.3\textwidth]{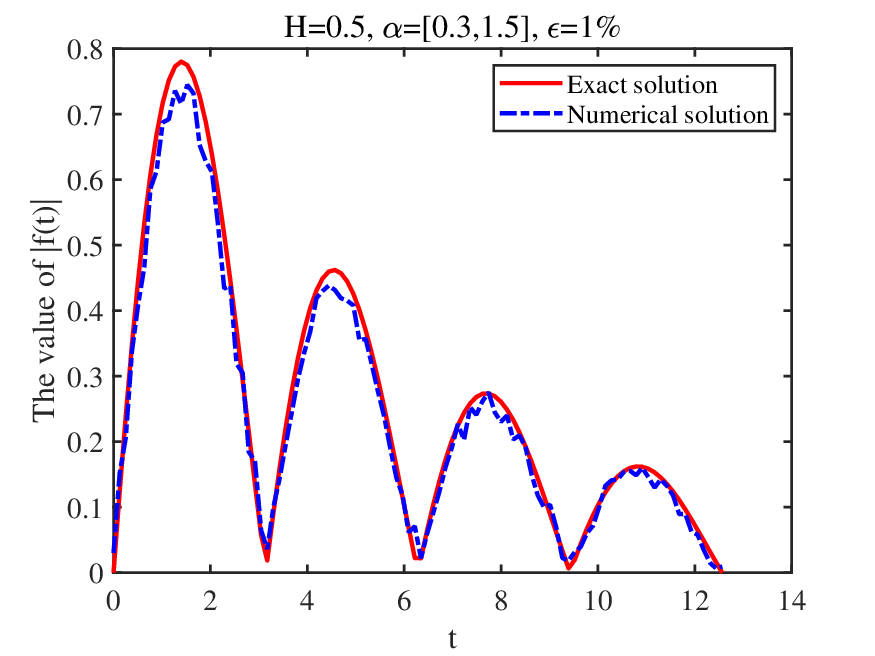}
  \includegraphics[width=0.3\textwidth]{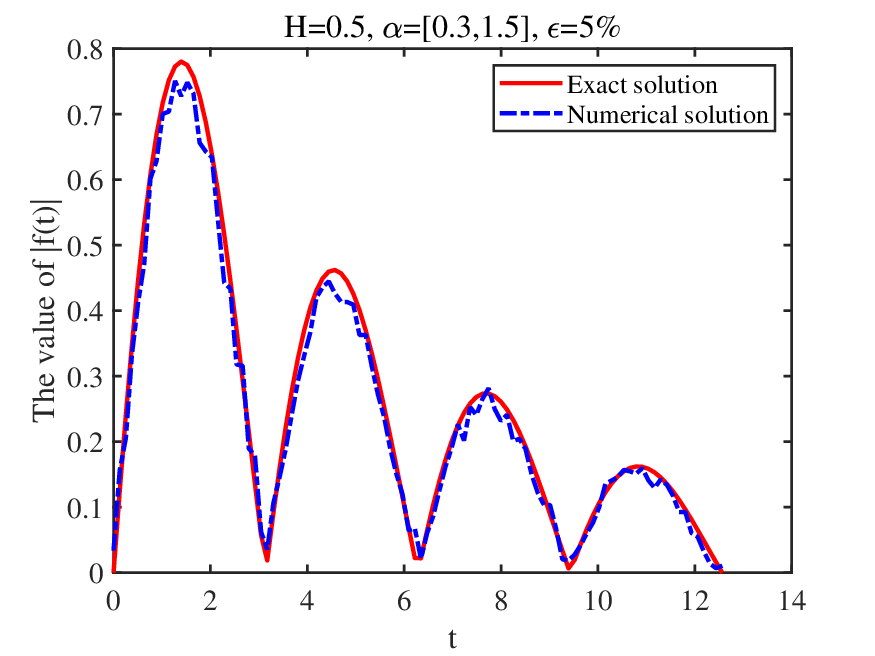}
  \includegraphics[width=0.3\textwidth]{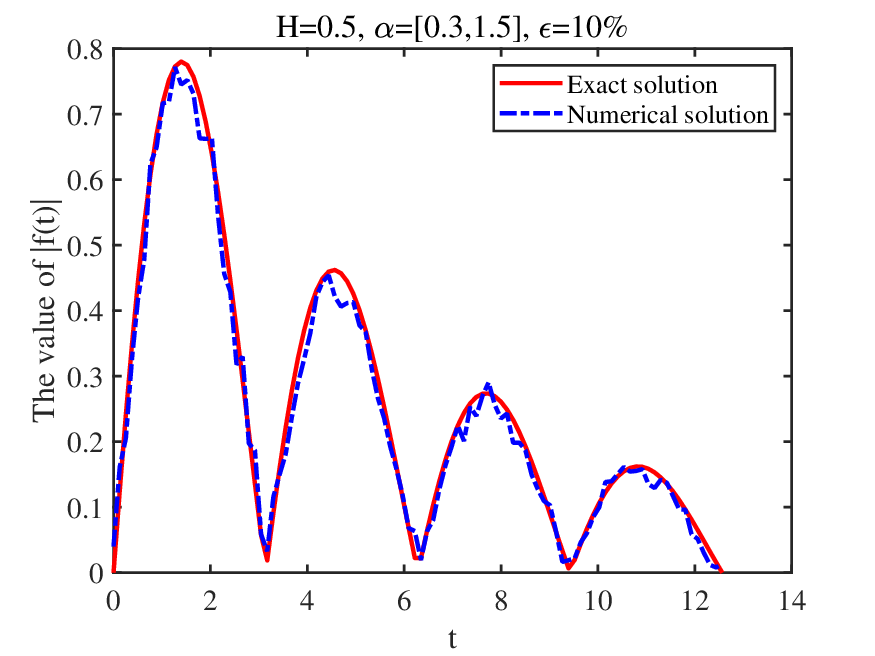}\\
  \includegraphics[width=0.3\textwidth]{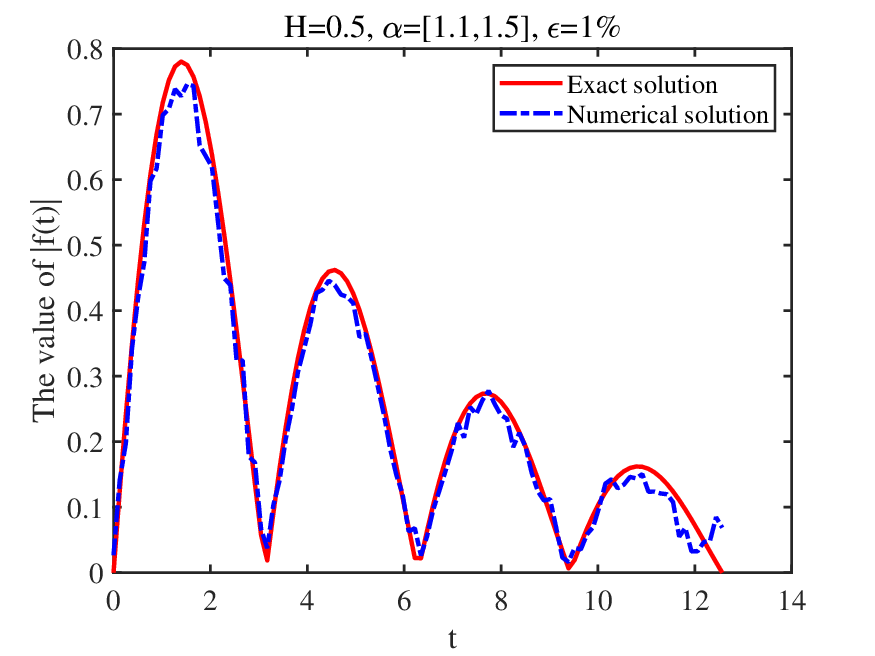}
  \includegraphics[width=0.3\textwidth]{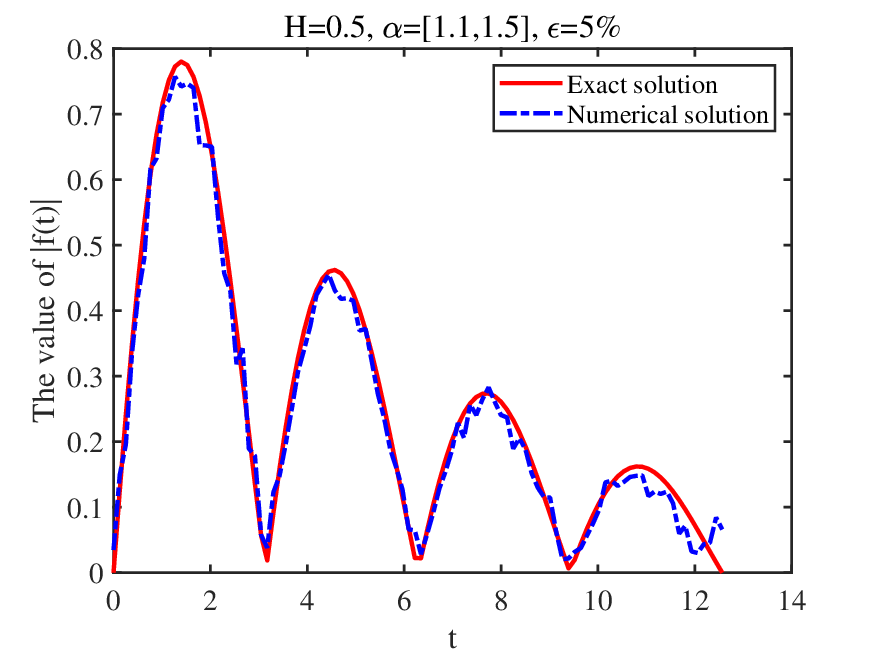}
  \includegraphics[width=0.3\textwidth]{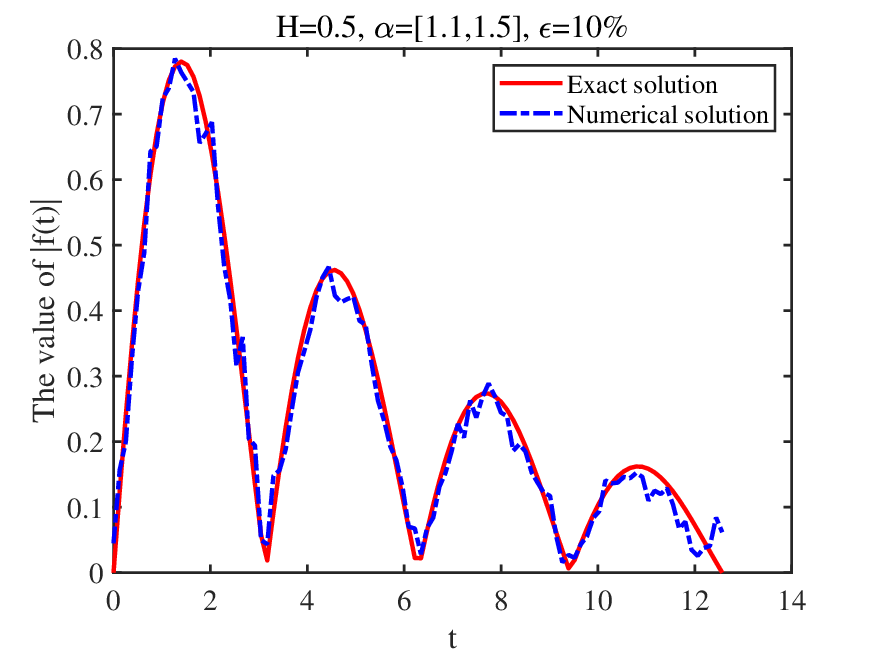}
  \caption{Example \ref{exm1}: Reconstruction of $|f(t)|$ with different levels of noise: (left) $\epsilon=1\%$, (middle) $\epsilon=5\%$, and (right) $\epsilon=10\%$, while varying the values $\bm \alpha$ under the constant Hurst parameter $H=0.5$.}\label{e1H5}
\end{figure}

\begin{figure}[h]
  \centering
  \includegraphics[width=0.3\textwidth]{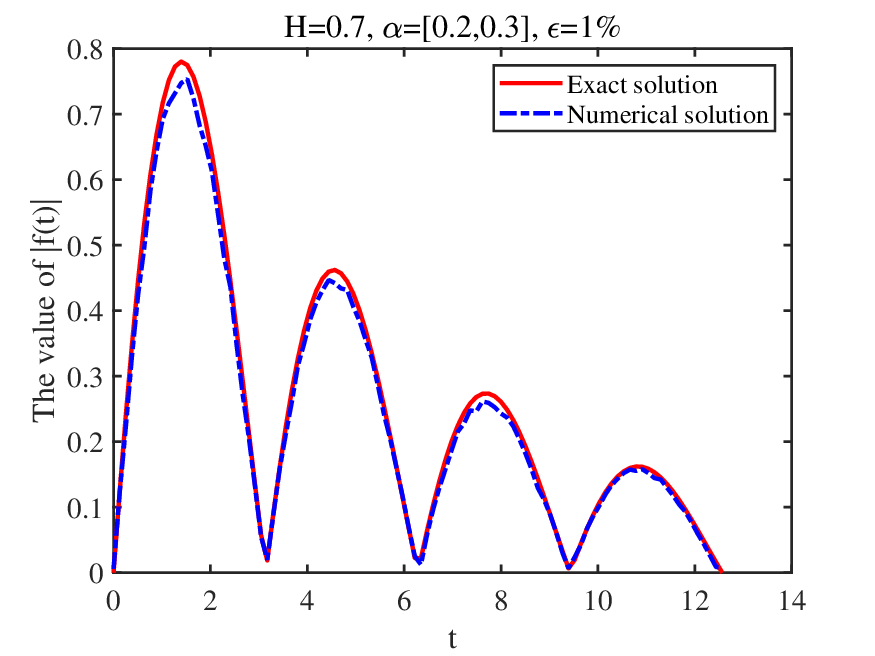}
  \includegraphics[width=0.3\textwidth]{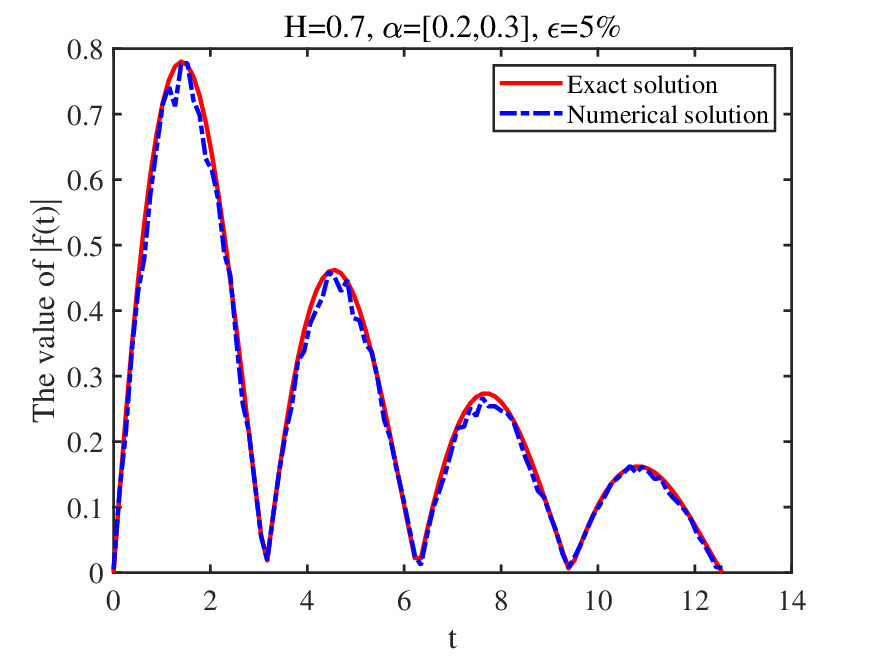}
  \includegraphics[width=0.3\textwidth]{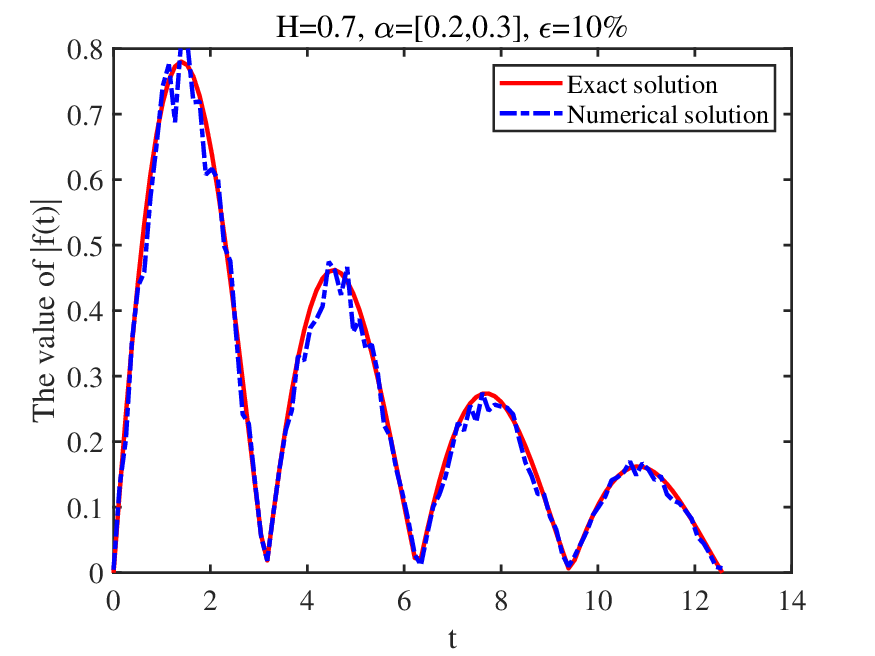}\\
  \includegraphics[width=0.3\textwidth]{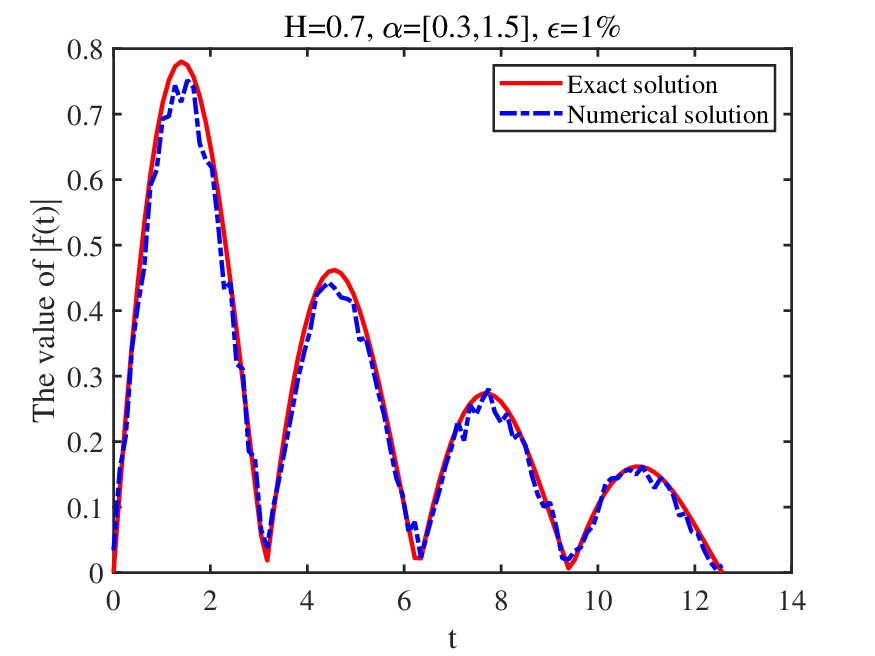}
  \includegraphics[width=0.3\textwidth]{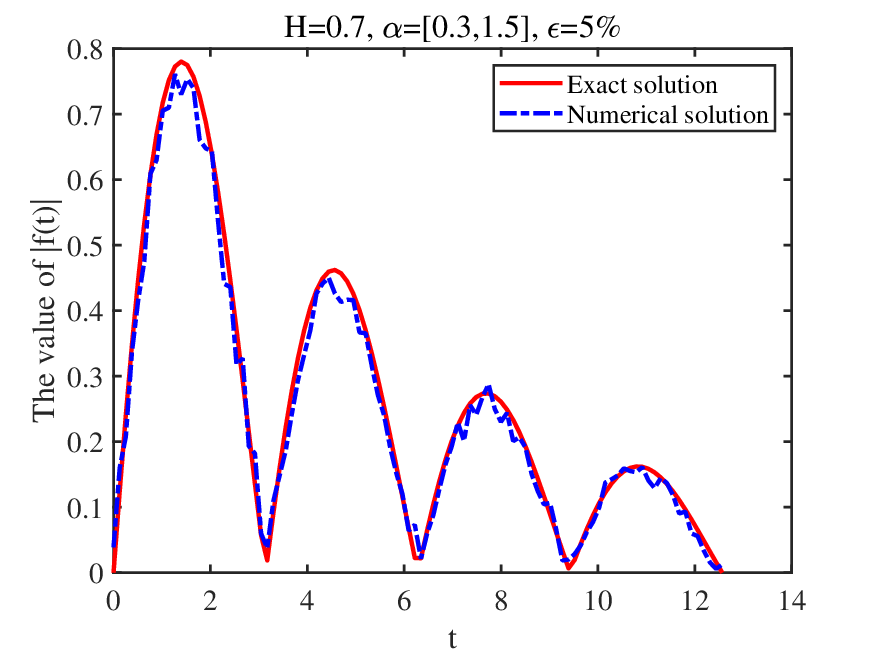}
  \includegraphics[width=0.3\textwidth]{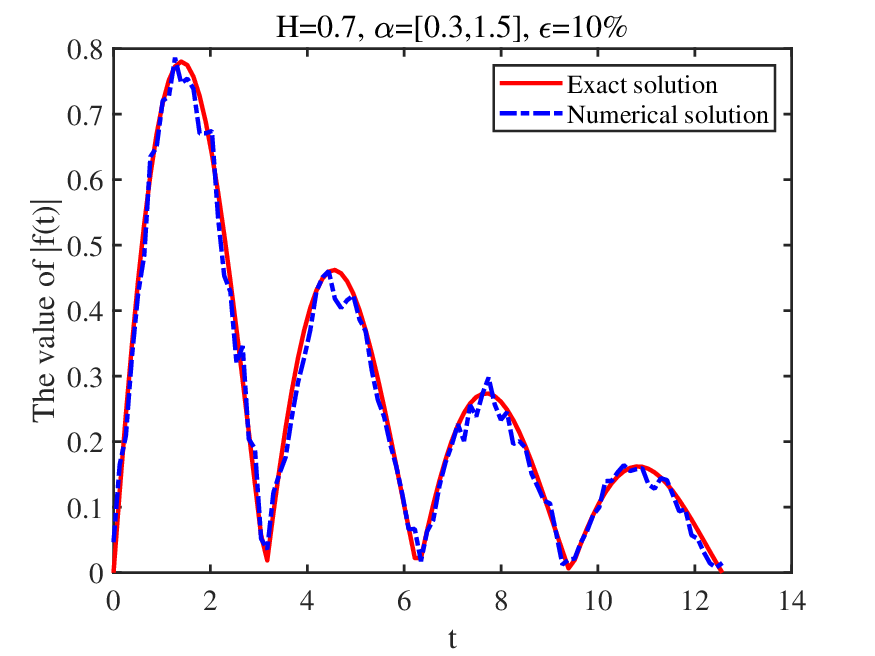}\\  \includegraphics[width=0.3\textwidth]{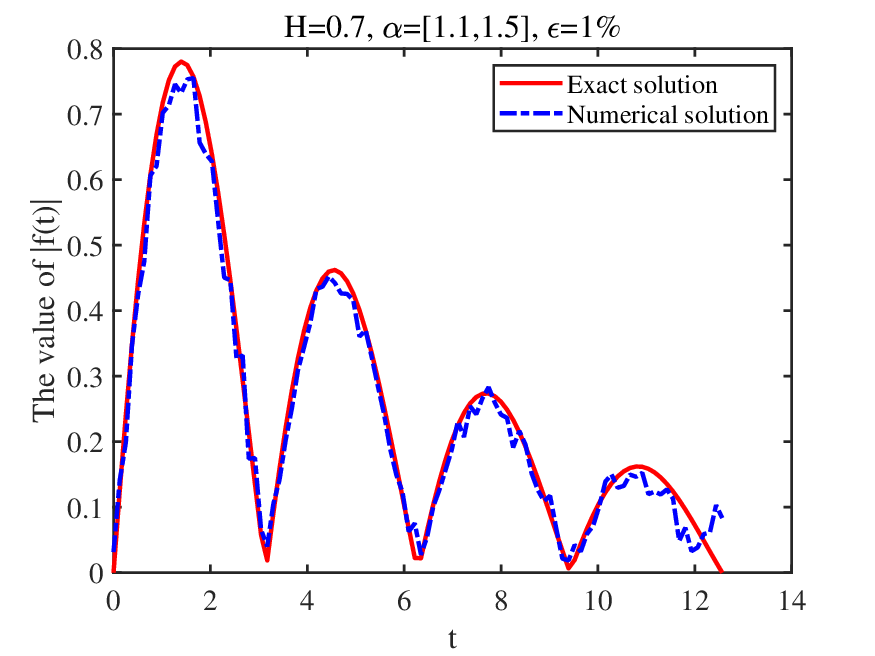}
  \includegraphics[width=0.3\textwidth]{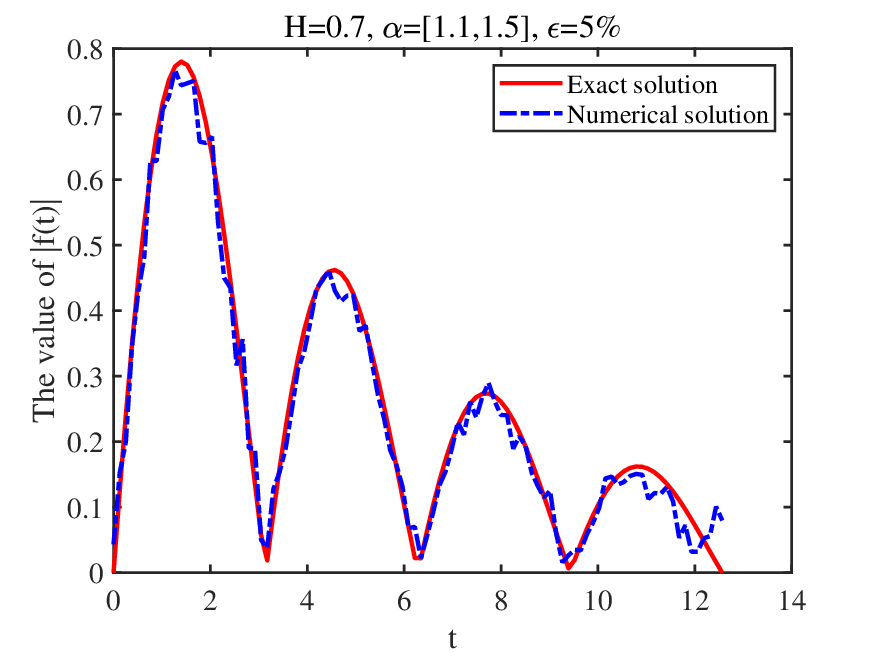}
  \includegraphics[width=0.3\textwidth]{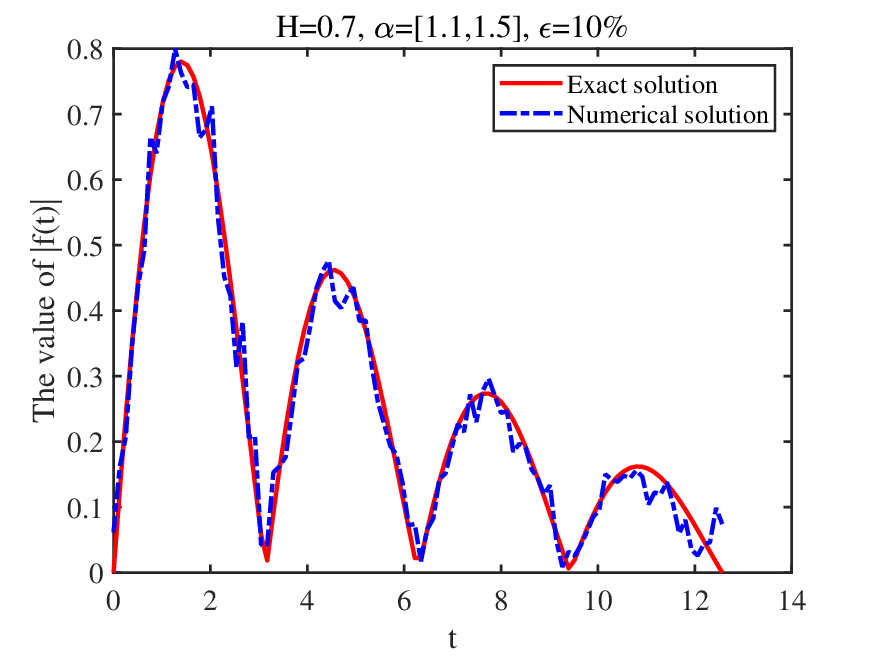}
  \caption{Example \ref{exm1}: Reconstruction of $|f(t)|$ with different levels of noise: (left) $\epsilon=1\%$, (middle) $\epsilon=5\%$, and (right) $\epsilon=10\%$, while varying the values  $\bm \alpha$ under the constant Hurst parameter $H=0.7$.}\label{e1H7}
\end{figure}

\begin{example}\label{exm2}
Consider a highly oscillatory function $f(t) = \sin(t)\cos(2t)$.
\end{example}

\begin{figure}[h]
  \centering
  \includegraphics[width=0.3\textwidth]{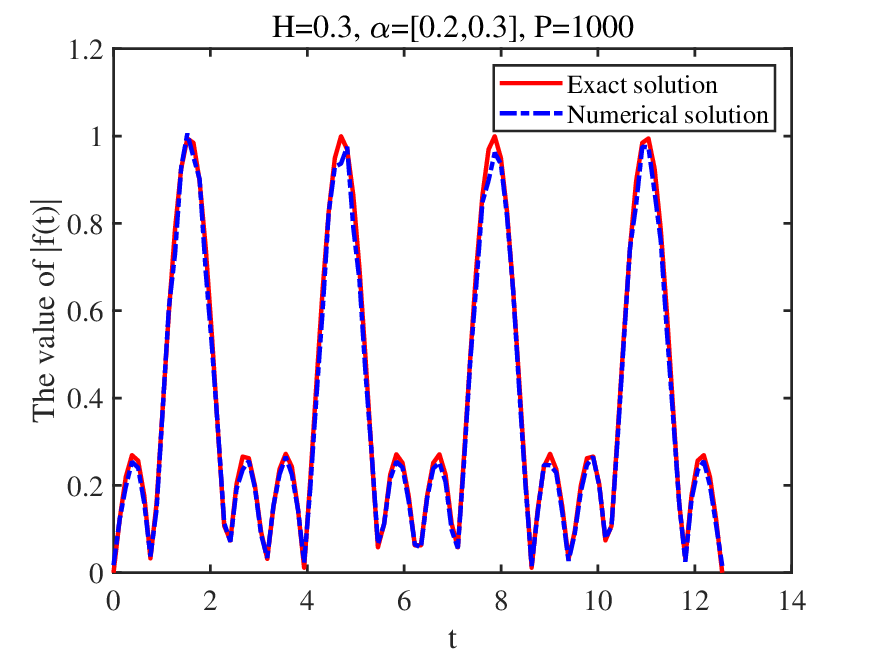}
  \includegraphics[width=0.3\textwidth]{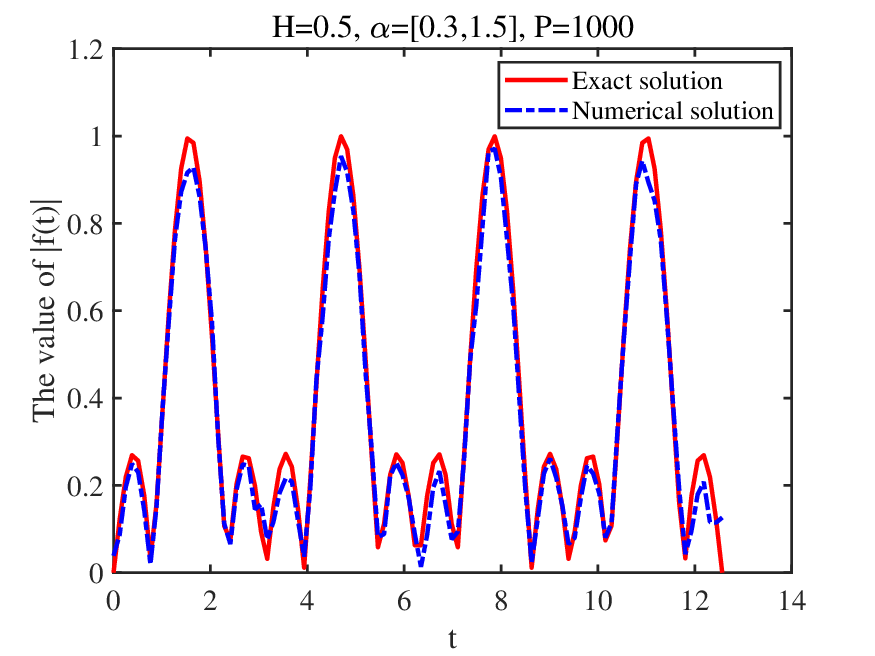}
  \includegraphics[width=0.3\textwidth]{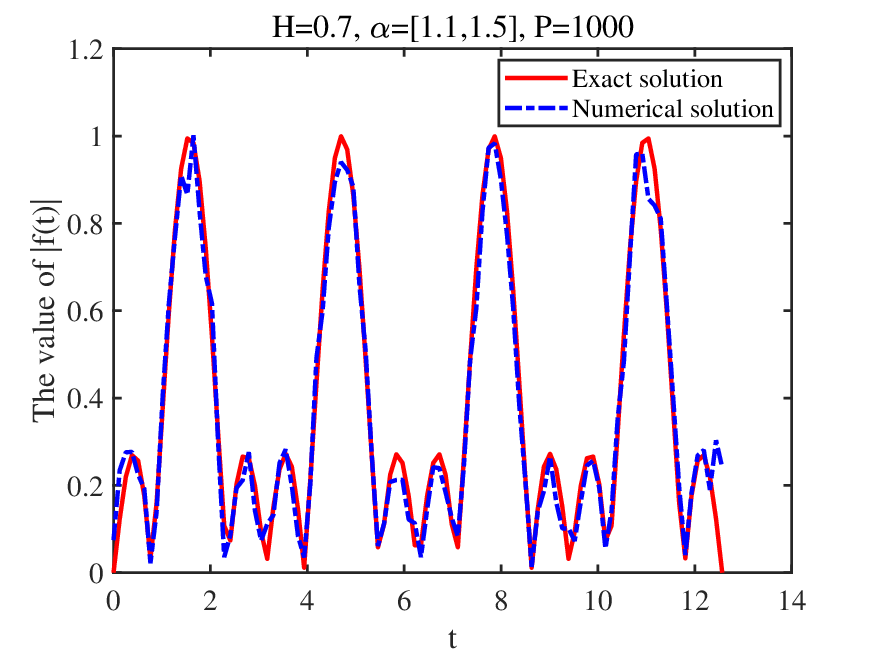}
  \caption{Example \ref{exm2}: Reconstruction of $|f(t)|$ with different combinations: (left) $H=0.3$ and $\bm{\alpha}=[0.2,0.3]$, (middle) $H=0.5$ and $\bm{\alpha}=[0.3,0.5]$, and (right) $H=0.7$ and $\bm{\alpha}=[1.1,1.5]$, while maintaining a consistent noise level of $\epsilon=5\%$.}\label{e2}
\end{figure}

In this example, we will not provide a detailed investigation of the impact of various parameters on the reconstruction, as the findings are similar to those in Example \ref{exm1}. Instead, we present the results using carefully chosen representative parameter values. Figure \ref{e2} shows the numerical results for Example \ref{exm2} across varying selections of $H$ and $\bm{\alpha}$ while maintaining a constant noise level of $\epsilon=5\%$. The remaining parameters, namely $W$, $N_m$, and $P$, remain fixed as detailed in Example \ref{exm1}. The results demonstrate the effectiveness of the proposed algorithm in handling a highly oscillating source function.

\begin{example}\label{exm3}
Consider a discontinuous function
\begin{equation*}
f(t)= \begin{cases}
0, & t \in[0, 4\pi/5), \\
2, & t \in[4\pi/5, 8\pi/5) ,\\
0.5, & t \in[8\pi/5, 12\pi/5) ,\\
1.5, & t \in[12\pi/5, 16\pi/5) ,\\
0, & t \in[16\pi/5, 4\pi).
\end{cases}
\end{equation*}
\end{example}

\begin{figure}
  \centering
  \includegraphics[width=0.3\textwidth]{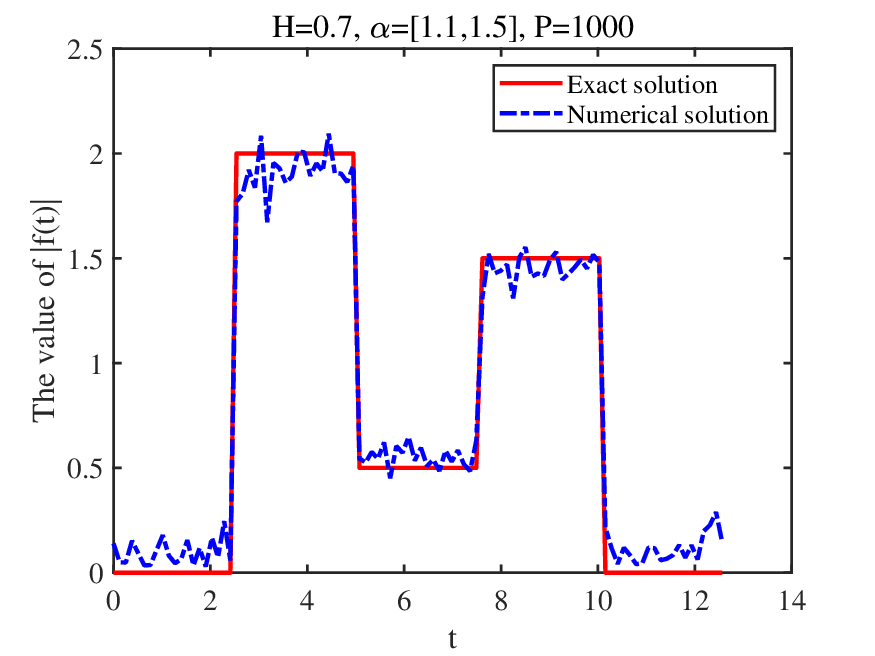}
  \includegraphics[width=0.3\textwidth]{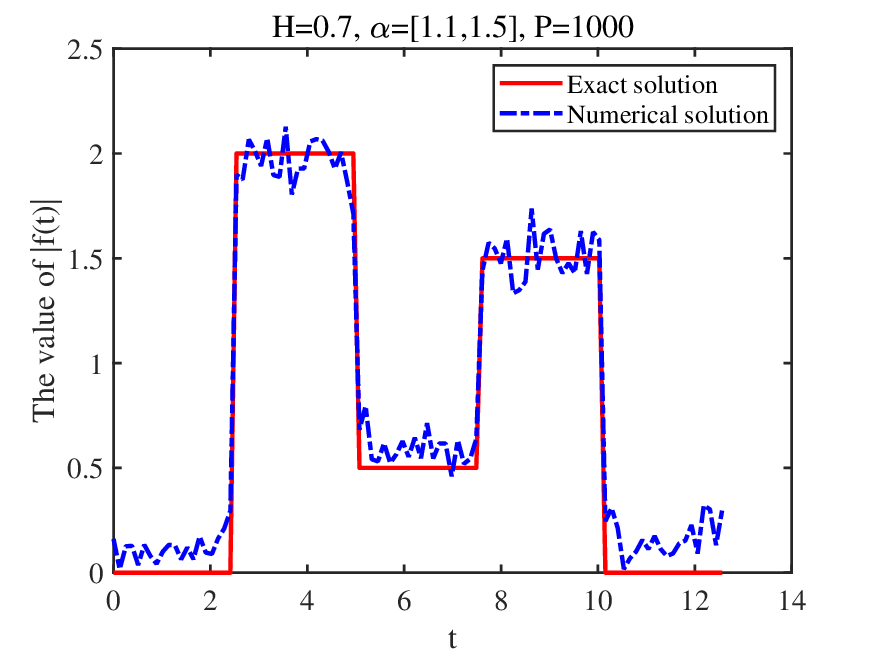}
  \includegraphics[width=0.3\textwidth]{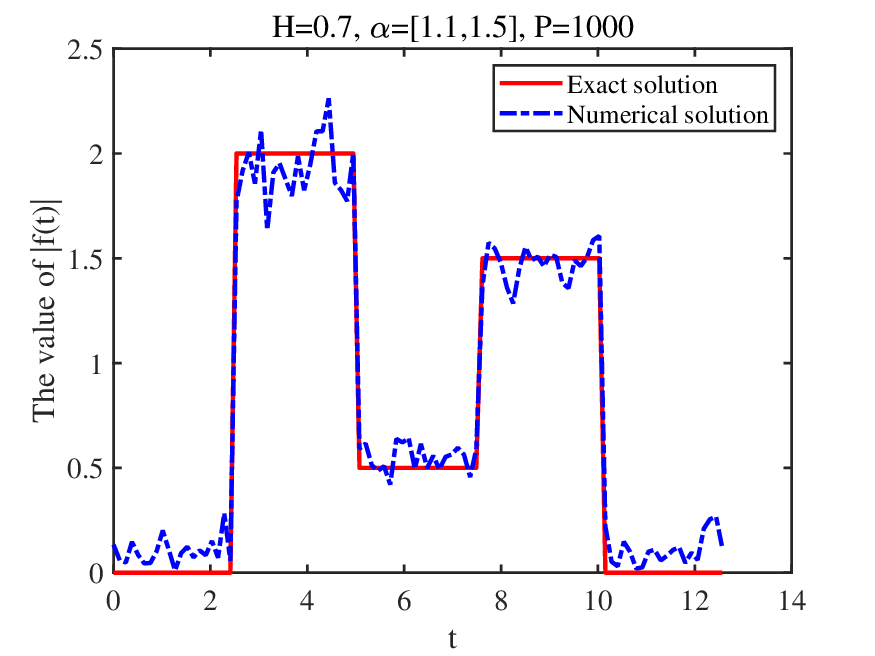}
  \caption{Example \ref{exm3}: Reconstruction of $|f(t)|$ with different combinations: (left) $\epsilon=1\%$, (middle) $\epsilon=5\%$, and (right) $\epsilon=10\%$, under the constant Hurst parameter $H=0.7$ and $\bm{\alpha}=[1.1,1.5]$.}\label{e3}
\end{figure}

This example was also examined in the study conducted by \cite{gong2021numerical}, and it is challenging to reconstruct due to the presence of infinitely many Fourier modes, with the corresponding Fourier coefficients decaying slowly. Once again, we will not provide an exhaustive investigation into the influence of various parameters on the reconstruction. In Figure \ref{e3}, numerical results showing the reconstruction of $|f(t)|$ in Example \ref{exm3} are presented. The parameters used for this representation include $H=0.7$, $\bm{\alpha}=[1.1,1.5]$, $W=3\pi$, $N_m=60$, $P=1000$, and varying noise levels $\epsilon=1\%, 5\%, 10\%$. Despite the emergence of the Gibbs phenomenon, a common occurrence when recovering discontinuous functions through Fourier transform based methods, the proposed algorithm demonstrates strong performance in handling the discontinuous case.

\section{Conclusion}\label{sec:6}

This paper addresses both the direct and inverse source problems associated with the stochastic multi-term time-fractional diffusion-wave equation. Regarding the direct random source problem, the well-posedness is obtained by demonstrating the well-posedness of its counterpart in the frequency domain. Furthermore, an analysis is conducted concerning the uniqueness and instability of the inverse random source problem in the frequency domain. To reconstruct the source function in the time domain, the PhaseLift method, combined with the spectral cut-off regularization technique, is utilized for numerical implementation. The numerical results validate the effectiveness of the proposed method.

This work expands upon existing results related to inverse random source problems for stochastic time-fractional differential equations, addressing more general cases. Specifically, it contains (1) both sub-diffusion cases with $\alpha_i\in(0,1)$ and super-diffusion cases with $\alpha_i\in(1,2)$, and (2) spatial random noise, which can be represented by fractional Brownian motion noise with $H\in(0,1)$ as opposed to the traditional Gaussian white noise with $H=\frac12$. Several challenges remain unresolved, including inverse random source problems in higher dimensions and inverse random potential problems for time-dependent stochastic partial differential equations, among others. We anticipate providing updates on our progress in addressing these challenges in future publications.

\end{document}